\newtheorem{thm}{Theorem}[section]
\newtheorem{cor}[thm]{Corollary}
\newtheorem{lem}[thm]{Lemma}
\newtheorem{prop}[thm]{Proposition}
\newtheorem{defn}[thm]{Definition}
\newtheorem{conj}[thm]{Conjecture}
\newtheorem*{thmnn}{Theorem}
\theoremstyle{remark}
\newtheorem{rmk}[thm]{Remark}
\begin{document}

\title{Thomae Formula for Abelian Covers of $\mathbb{CP}^{1}$}

\author{Yaacov Kopeliovich, Shaul Zemel}

\maketitle

%\begin{abstract}
%Abelian covers of $\mathbb{CP}^{1}$, with fixed Galois group $A$, are classified, as a first step, by a discrete set of parameters. Any such cover $X$, of genus $g\geq1$ say, carries a finite set of $A$-invariant divisors of degree $g-1$ on $X$ that produce non-zero theta constants on $X$. We show how to define a quotient involving a power of the theta constant on $X$ that is associated with such a divisor $\Delta$, some polynomial in the branching values, and a fixed determinant on $X$ that does not depend on $\Delta$, such that the quotient is constant on the moduli space of $A$-covers with the given discrete parameters. This generalizes the classical formula of Thomae, as well as all of its known extensions by various authors.
%\end{abstract}

\section*{Introduction}

The references \cite{[Th1]} and \cite{[Th2]} from the 19th century found a relation between the non-vanishing even theta constants on a hyperelliptic Riemann surface $X$ and certain polynomials in the branching values. More explicitly, if $X$ has genus $g$ and $\lambda_{j}$, $1 \leq j \leq 2g+2$ are the branching values, then every even theta characteristic $e$ whose theta constant does not vanish is associated with a partition of the set $\{\lambda_{j}\}_{j=1}^{2g+2}$ of branching values into two disjoint sets, say $I$ and $I^{c}$, each of cardinality $g+1$, and we define the polynomial $p_{e}$ to be $\prod_{\{i,j \in I|i<j\}}(\lambda_{i}-\lambda_{j})^{2}\prod_{\{i,j \in I^{c}|i<j\}}(\lambda_{i}-\lambda_{j})^{2}$. Choosing a canonical homology basis for $X$, say $a_{i}$ and $b_{i}$ with $1 \leq i \leq g$, we let $\tau$ be the (usual) period matrix arising from integrating the dual basis of the differentials of the first kind on $X$ along the $b_{i}$s, we define $C$ to be the matrix formed by integrating the natural basis for these differentials (which usually differs from the dual one) along the $a_{i}$s, and we set $\theta[e](0,\tau)$ to be the theta constant corresponding to the characteristic $e$. The results of \cite{[Th1]} and \cite{[Th2]} state that the quotient $\frac{\theta^{8}[e](0,\tau)}{(\det C)^{4}p_{e}}$ depends neither on $e$ nor on the branching values on $X$. These formulae are now known as \emph{Thomae formulae}, after the author of these references.

After laying dormant for about a century, the Thomae formulae returned to active research, partly due to emerging interest related to questions arising from physics. The first generalizations for cyclic covers were given by \cite{[BR1]} and \cite{[BR2]} for the non-singular case, using methods from conformal field theory. These results were later established rigorously in \cite{[Na]}. Numerous authors considered fully ramified cyclic covers in various degrees of generality---see \cite{[EiF]}, \cite{[EbF]}, \cite{[FZ]}, and the more general references \cite{[Z]} and \cite{[Ko2]} (we also mention the contributions of \cite{[M]} and \cite{[MT]}, as well as the application to Young tableaux in \cite{[Ko1]}). The preprint \cite{[Ko3]} contains the first non-cyclic cases of Thomae formulae (still using the tools from \cite{[Na]}). A more abstract and algebraic approach is taken by \cite{[SB]}, which eventually expresses an $N$-torsion theta constant as the theta constant of characteristic 0 times a quotient of fractional powers of Weil functions on Jacobians as principally polarized abelian varieties. These functions are also related to determinants of sections of certain line bundles on the corresponding algebraic curves. Note that \cite{[SB]} considers arbitrary compact Riemann surfaces (as complex algebraic curves), without the Galois condition over $\mathbb{CP}^{1}$, but evaluating the final results there requires some extra calculations, after unfolding all the definitions.

In this paper we show how to state and prove Thomae type formulae for general abelian covers of $\mathbb{CP}^{1}$ by combining this machinery with the formalism that we established in the previous paper \cite{[KZ]}. More precisely, we show how to generalize the methods from \cite{[Ko1]}, which are based on \cite{[Na]}, to the case of a general abelian cover of $\mathbb{CP}^{1}$. The result expresses the theta constants in very explicit terms, up to a global constant and a determinant of a specific matrix of integrals.

We remark that \cite{[Th2]} contains an additional formula, involving theta derivatives at characteristics lying on the generic locus of the theta divisor. Recently the authors found, with V. Enolskii, how to generalize this formula as well for $Z_{3}$ curves (see \cite{[EKZ]}). We expect to be able to extend these formulae as well to more general families of Riemann surfaces in the future.

\smallskip

The search for Thomae-like formulae is natural in the context of investigating special values of theta functions on the moduli space of algebraic covers, and such insights into the moduli space should have broader applications. Some of the applications include generalizations of the Picard $\lambda$-function to arbitrary abelian covers, the investigation of equations satisfied by period matrices of specific nature, and the connection to the representation theory of the monodromy groups of the underlying moduli spaces (see \cite{[Mu]} and \cite{[Ko1]} for examples of this type). Thomae formulae were also applied in \cite{[dJ]} in order to obtain explicit forms of the Mumford isomorphism, with an application to Diophantine problems for hyperelliptic curves (see also \cite{[JK]}). For other directions, like obtaining bounds on modular forms describing discriminants of hyperelliptic curves and physics, see \cite{[Lo]}, \cite{[vK]}, and \cite{[EG1]} (among others). Note that this type of Thomae formulae depends on choosing non-special divisors (either positive of degree $g$ or non-positive of degree $g-1$) that are invariant under the action of the abelian Galois group of $X$ over $\mathbb{CP}^{1}$, and such divisors need not always exist (see, e.g., \cite{[GDT]} for cyclic cases where this happens). We find some additional abelian covers carrying no such divisors (for simpler reasons), hence for which no effective Thomae formula is defined.

\smallskip

These types of applications indicate why it is worthwhile to have Thomae type formulae for more general algebraic curves. The method that we use, following \cite{[Na]} and \cite{[EG2]}, is to obtain explicit algebraic expressions for analytical objects such as the Szeg\H{o} kernel function (with our specific characteristics) and the canonical differential on $X \times X$. A general relation from \cite{[Fa]} yields equalities involving derivatives of theta functions, and the Rauch Variational Formula from \cite{[Ra]} allows us to transform these equalities into a simple differential equation for the theta constant (as a function of the branching values). The final result is then established by integrating this equation.

For stating the final formula, let $A$ be the Galois group of the cover, whose order and exponent are $n$ and $m$ respectively, and for every $Id_{X}\neq\sigma \in A$ we let $\lambda_{\sigma,j}$, $1 \leq j \leq r_{\sigma}$ be the $r_{\sigma}$ branching images in $\mathbb{C}\subseteq\mathbb{CP}^{1}$ that are associated to $\sigma$ as in \cite{[KZ]} (see Proposition \ref{liftcirc} for the explicit meaning of this association). We denote the order of the element $\sigma \in A$ by $o(\sigma)$, set $c_{\sigma,\rho}=\frac{(o(\sigma)-1)(o(\rho)-1)}{4}$, and after fixing a canonical homology basis for $X$, we let $\tau$ denote the resulting period matrix, and set $C$ to be the matrix of $a_{i}$-integrals of an appropriate non-normalized basis for the differentials of the first kind on $X$. We write $\theta[e](0,\tau)$ for the (non-vanishing) theta constant that is associated with the characteristic $e$ coming from a non-special $A$-invariant divisor, and the main result of this paper, stated in Theorem \ref{relpert}, then reads as follows.
\begin{thmnn}
The equality
\[\theta[e]^{4m}(0,\tau)=\alpha_{e}(\det C)^{2m}\prod_{(\sigma,j)<(\rho,i)}(\lambda_{\sigma,j}-\lambda_{\rho,i})^{\frac{2mn}{o(\sigma)o(\rho)}[2\phi_{h+d\mathbb{Z}}(\xi)+\phi_{h+d\mathbb{Z}}(0)+c_{\sigma,\rho}]}\] holds, where $\alpha_{e}$ is a constant that is independent of the branching values, and the $\phi$-terms are generalized Dedekind sums arising from $\sigma$, $\rho$, $j$, $i$, and $e$.
\end{thmnn}
The occurrence of these generalized Dedekind sums in our formulae suggests that the resulting rational functions may be connected to other interesting objects. We also investigate the dependence of the constant $\alpha_{e}$ on the characteristic $e$, namely we conjecture that an appropriate power of $\alpha_{e}$ would give the same value for every $e$. Since the form of the expressions we encounter in the proofs may be a bit difficult to grasp in general, we conclude with some examples, where all the expressions can be written explicitly in simpler forms.

\smallskip

This paper is divided into 8 sections. Section \ref{Divs} presents the required results from \cite{[KZ]}, including the form of the invariant divisors defining our characteristics. Section \ref{Chars} gathers some properties of the theta constants, the Abel--Jacobi map, and the divisors appearing in our analysis. Section \ref{Szego} constructs the Szeg\H{o} kernel using algebraic expressions, and evaluates its expansion near the diagonal. Section \ref{CanDif} investigates the form of the canonical differential in our case, including the Bergman projective connection arising from its expansion near the diagonal. In Section \ref{BrPts} we consider the expansions around branch points, and use the formula from \cite{[Fa]} to connect the expressions from the previous two sections. Section \ref{Thomae} then establishes a differential equation for the theta constants, and proves the main result. In Section \ref{Moduli} we consider the Thomae constants $\alpha_{e}$ on coarser and coarser moduli spaces, and pose the conjecture about powers of $\alpha_{e}$ being independent of $e$. Finally, Section \ref{Examples} contains some examples from the literature, shows why some non-cyclic abelian covers do not carry any invariant divisors of the required type, and gives some more explicit formulae for the Szeg\H{o} kernel and the canonical differential in very special cases.

\smallskip

We thank S. Grushevsky and H. Farkas for continued support and interest in this topic, as well as M. Fried, A. Rojas, M. Carvacho, and R. Livn\'{e} for interesting discussions. We also express our deep gratitude to the anonymous referees for many valuable suggestions to improve the presentation of this paper. Their contribution is gratefully acknowledged.

\section*{List of Notation}

$A$ --- An abelian group, typically of automorphisms of a Riemann surface $X$.

\noindent
$\widehat{A}$ --- The dual group $\mathrm{Hom}(A,\mathbb{C}^{\times})$ of $A$, consisting of characters on $A$.

\noindent
$a_{i}$ --- An element of a canonical homology basis on $X$, with $1 \leq i \leq g$.

\noindent
$b_{i}$ --- An element of a canonical homology basis on $X$, with $1 \leq i \leq g$.

\noindent
$\beta_{\sigma,j}^{\Delta}$ --- The coefficient of $z^{-1}(\lambda_{\sigma,j})$ in the divisor $\Delta$ from Equation \eqref{normdiv}.

\noindent
$C$ --- The non-normalized period matrix arising from Proposition \ref{holdif}.

\noindent
$\mathbb{CP}^{1}$ --- The complex projective line, namely the Riemann sphere $\mathbb{C}\cup\{\infty\}$.

\noindent
$\mathrm{Div}(X)$ --- The divisor group of the Riemann surface $X$.

\noindent
$\mathrm{div}(f)$, $\mathrm{div}(\omega)$ --- The divisor of the meromorphic function $f$ on $X$ or of the meromorphic differential $\omega$ on $X$.

\noindent
$\deg p$, $\deg\Delta$ --- The degree of the polynomial $p$ or of the divisor $\Delta$.

\noindent
$\Delta$ --- An $A$-invariant divisor on $X$, typically non-special of degree $g-1$.

\noindent
$\delta$ --- The second part of a theta characteristic, typically lies in $\mathbb{Q}^{g}$.

\noindent
$\delta_{\xi,\eta}$ --- The Kronecker $\delta$-symbol, which equals 1 when $\xi=\eta$ and 0 otherwise.

\noindent
$\mathbf{e}(w)$ --- A shorthand for $e^{2\pi iw}$ for complex $w$.

\noindent
$e$ --- A theta characteristic in $\mathbb{C}^{g}$, also written as $\tau\frac{\varepsilon}{2}+I\frac{\delta}{2}$.

\noindent
$\varepsilon$ --- The first part of a theta characteristic, typically lies in $\mathbb{Q}^{g}$.

\noindent
$g$ --- The genus of the Riemann surface $X$.

\noindent
$G_{B}$ --- The Bergman projective connection.

\noindent
$i(\Delta)$ --- The index of specialty of the divisor $\Delta$.

\noindent
$J(X)$ --- The Jacobian of the Riemann surface $X$.

\noindent
$K_{R}$ --- The vector of Riemann constants associated with the base point $R$.

\noindent
$\lambda_{\sigma,j}$ --- A branching value of $z$ that is associated with the element $\sigma \in A$ via Proposition \ref{liftcirc}.

\noindent
$m$ --- The exponent of the abelian group $A$.

\noindent
$\mathcal{M}_{A,\vec{r}}$, $\mathcal{M}_{A,\vec{r}}^{Tei}$, $\mathcal{M}_{A,\vec{r}}^{Tei,z}$, $\mathcal{M}_{A,\vec{r}}^{Tei,z,ord}$, $\mathcal{M}_{A,\vec{r}}^{z,ord}$ --- Moduli spaces (see Definition \ref{moduli}).

\noindent
$n$ --- The order of the abelian group $A$, as well as of its dual $\widehat{A}$.

\noindent
$N$ --- The operator on normalized $A$-invariant divisors that inverts the characteristics, as defined in Corollary \ref{NonAinv}.

\noindent
$o(\sigma)$ --- The order of the element $\sigma$ in $A$.

\noindent
$\mathcal{P}_{\leq d}(z)$ --- The space of polynomials of degree $\leq d$ in $z$, for an integer $d\geq-1$.

\noindent
$p_{\Delta,\chi}$ --- The polynomial required for normalizing $\chi\Delta$, defined in Equation \eqref{pDeltachi}.

\noindent
$\phi_{h+d\mathbb{Z}}(\xi)$ --- A Dedekind sum defined in Equation \eqref{phihd}.

\noindent
$\psi_{\chi}$ --- A meromorphic differential on $X$ on which $A$ acts by $\chi$, whose divisor is normalized and is described in Proposition \ref{decomgen}.

\noindent
$r_{\sigma}$ --- The number of values in $\mathbb{CP}^{1}$ to which Proposition \ref{liftcirc} associates the non-trivial element $\sigma \in A$.

\noindent
$r(-\Delta)$ --- The dimension of $\{f\in\mathbb{C}(X)|\mathrm{div}(f)\geq-\Delta\}$.

\noindent
$\rho$ --- An element of the abelian group $A$.

\noindent
$S^{1}$ --- The multiplicative group of complex numbers of absolute value 1.

\noindent
$S[e]$ --- The Szeg\H{o} kernel on $X$ that is associated with the characteristic $e$.

\noindent
$\mathcal{S}\{z,t\}$ --- The Schwarzian derivative of $z$ with respect to $t$.

\noindent
$\sigma$ --- An element of the abelian group $A$.

\noindent
$t_{\chi}$ --- The combination $\sum_{\sigma \neq Id_{X}}\frac{r_{\sigma}u_{\chi,\sigma}}{o(\sigma)}$, which is an integer by Proposition \ref{decomgen}.

\noindent
$\tau$ --- The period matrix of a Riemann surface with a canonical homology basis.

\noindent
$\theta[e]$, $\theta\big[{\textstyle{\varepsilon \atop \delta}}\big]$ --- The theta function with characteristic $e$ or $\big[{\textstyle{\varepsilon \atop \delta}}\big]$.

\noindent
$u_{R}$, $u$ --- The Abel--Jacobi map with base point $R$, and its restriction to divisors of degree 0 (which is independent of $R$).

\noindent
$u_{\chi,\sigma}$ --- The integer in $\big[0,o(\sigma)\big)$ describing $\chi(\sigma)$ as in Equation \eqref{uchisigma}.

\noindent
$v_{s}$ --- An differential of the first kind that belongs to a basis that is dual to a canonical homology basis.

\noindent
$\omega$ --- The canonical differential on $X \times X$.

\noindent
$X$ --- A compact Riemann surface, of genus $g$, typically an $A$-cover of $\mathbb{CP}^{1}$.

\noindent
$\chi$ --- A character on $A$, namely an element of $\widehat{A}$.

\noindent
$\lfloor x \rfloor$ --- The integral part of the real number $x$, namely $\max\{n\in\mathbb{Z}|n \leq x\}$.

\noindent
$\{x\}$ --- The fractional part $x-\lfloor x \rfloor$ of the real number $x$.

\noindent
$y_{\chi}$ --- A meromorphic function on $X$ on which $A$ acts by $\chi$, whose divisor is normalized and is described in Proposition \ref{decomgen}.

\noindent
$\eta$ --- A character on $A$, namely an element of $\widehat{A}$.

\noindent
$z$ --- A meromorphic function on $X$, as well as a map from $X$ to $\mathbb{CP}^{1}$, typically Galois with abelian Galois group $A$.

\noindent
$\zeta$ --- The $\mathbb{C}^{g}$-variable of theta functions.

\section{Non Special Divisors on Abelian Covers \label{Divs}}

In this section we give, following \cite{[KZ]}, the normalized form of the non-special invariant divisors of degree $g-1$ on a genus $g$ abelian cover of $\mathbb{CP}^{1}$.

\smallskip

Let $z:X\mapsto\mathbb{CP}^{1}$ be an abelian cover of the Riemann sphere, where we denote the Galois group by $A$, its order by $n$, and its trivial element (naturally) by $Id_{X}$. The order of an element $\sigma \in A$ is denoted by $o(\sigma)$. Then Section 1 of \cite{[KZ]} contains the following result, refining the signature of of the Galois map $z$.
\begin{prop}
Consider a simple closed oriented loop around a given point $\lambda\in\mathbb{CP}^{1}$, and lift it to $X$. Then the end point of the lift is the image of the starting point under an element $\sigma=\psi(\lambda) \in A$ that depends only on $\lambda$, and which generates the stabilizer in $A$ of any of the $\frac{n}{o(\sigma)}$ points $P \in X$ with $z(P)=\lambda$. The order of $z-\lambda$ (or $\frac{1}{z}$ in case $\lambda=\infty$) at any such point is $o(\sigma)$. \label{liftcirc}
\end{prop}
A point $\lambda\in\mathbb{CP}^{1}$ is associated via Proposition \ref{liftcirc} to a non-trivial element of $A$ if and only if it is a branching value of $z$. Therefore for every $Id_{X}\neq\sigma \in A$ the set of elements $\lambda\in\mathbb{CP}^{1}$ that are associated with $\sigma$ is finite, and we denote its cardinality by $r_{\sigma}$. We shall henceforth assume that $\infty$ is not a branching value of $z$, and set for $Id_{X}\neq\sigma \in A$ the notation \[\big\{\lambda\in\mathbb{C}\big|\psi(\lambda)=\sigma\big\}=\big\{\lambda_{\sigma,j}\big\}_{j=1}^{r_{\sigma}},\quad\mathrm{as\ well\ as}\quad z^{-1}(\lambda_{\sigma,j})=\sum_{\{P \in X|z(P)=\lambda_{\sigma,j}\}}P\] (the latter being a divisor of degree $\frac{n}{o(\sigma)}$ by Proposition \ref{liftcirc}). Similarly, we let $z^{-1}(\infty)$ be the divisor of poles of $z$, which consists of $n$ distinct points by our assumption that there is no branching over $\infty$. The divisors of $z-\lambda_{\sigma,j}$ with $Id_{X}\neq\sigma \in A$ and $1 \leq j \leq r_{\sigma}$ and of the differential $dz$ are therefore
\begin{equation}
o(\sigma)z^{-1}(\lambda_{\sigma,j})-z^{-1}(\infty)\mathrm{\ and\ }\sum_{\sigma \neq Id_{X}}\sum_{j=1}^{r_{\sigma}}\big(o(\sigma)-1\big)z^{-1}(\lambda_{\sigma,j})-2z^{-1}(\infty) \label{simpledivs}
\end{equation}
respectively, where we emphasize that we always consider expressions and divisors on $X$ (and never on $\mathbb{CP}^{1}$). The Riemann--Hurwitz formula then produces the following expression.
\begin{prop}
The genus $g$ of $X$ is $1-n+\sum_{\sigma \neq Id_{X}}\frac{nr_{\sigma}}{2o(\sigma)}\big(o(\sigma)-1\big)$. \label{genus}
\end{prop}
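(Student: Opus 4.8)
The plan is to apply the Riemann--Hurwitz formula to the degree-$n$ branched covering $z:X\to\mathbb{CP}^1$. Writing $g_0=0$ for the genus of $\mathbb{CP}^1$, Riemann--Hurwitz gives
\[
2g-2 = n(2\cdot 0-2) + \sum_{P\in X}\bigl(e_P-1\bigr),
\]
where $e_P$ denotes the ramification index of $z$ at $P$. So the entire computation reduces to evaluating the total ramification $\sum_P(e_P-1)$ from the combinatorial data $(r_\sigma,o(\sigma))$ recorded in Section \ref{Divs}.

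The key structural input is that the ramification of an Abelian (indeed Galois) cover is governed by the stabilizers: over a branching value $\lambda_{\sigma,j}$ every pre-image point has stabilizer the cyclic group $\langle\sigma\rangle$ of order $o(\sigma)$, hence ramification index exactly $o(\sigma)$, and by the orbit--stabilizer counting the fiber $z^{-1}(\lambda_{\sigma,j})$ consists of exactly $\tfrac{n}{o(\sigma)}$ such points (this is precisely the degree of the divisor $z^{-1}(\lambda_{\sigma,j})$ already noted in the text). Over any non-branching value, and in particular over $\infty$ by the standing assumption, the cover is unramified. Therefore each fiber over $\lambda_{\sigma,j}$ contributes $\tfrac{n}{o(\sigma)}\bigl(o(\sigma)-1\bigr)$ to $\sum_P(e_P-1)$, and summing first over $j=1,\dots,r_\sigma$ and then over all non-trivial $\sigma\in A$ gives
\[
\sum_{P\in X}\bigl(e_P-1\bigr)=\sum_\sigma \frac{n r_\sigma}{o(\sigma)}\bigl(o(\sigma)-1\bigr).
\]

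Substituting this into Riemann--Hurwitz yields $2g-2=-2n+\sum_\sigma \tfrac{n r_\sigma}{o(\sigma)}\bigl(o(\sigma)-1\bigr)$, and dividing by $2$ and rearranging gives the claimed formula $g=1-n+\sum_\sigma\tfrac{n r_\sigma}{o(\sigma)}\bigl(o(\sigma)-1\bigr)$. In the concrete fiber-product presentation one can cross-check this by the same argument applied to each $Z_{m_l}$-curve $Y_l$, but the abstract argument above is cleaner and is exactly what \cite{[KZ]} uses.

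Honestly there is no real obstacle here: the only point requiring care is the bookkeeping that each branching value $\lambda_{\sigma,j}$ really does have all pre-images with the \emph{same} ramification index $o(\sigma)$ — this is the Galois-ness of the cover combined with the definition of the $\lambda_{\sigma,j}$ as exactly the values whose stabilizers are generated by $\sigma$ — and that there is no ramification over $\infty$, which is the explicit simplifying hypothesis $m_l\mid\deg F_l$. Once those two facts are in place the result is a one-line substitution.
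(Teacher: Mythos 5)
Your approach---Riemann--Hurwitz for the degree-$n$ map $z$, with the total ramification computed fiber by fiber from the stabilizer data---is exactly the argument the paper attributes to \cite{[KZ]}, and everything up to and including the identity $\sum_{P}(e_{P}-1)=\sum_{\sigma}\frac{nr_{\sigma}}{o(\sigma)}\big(o(\sigma)-1\big)$ is correct. The gap is in the very last line: dividing $2g-2=-2n+\sum_{\sigma}\frac{nr_{\sigma}}{o(\sigma)}\big(o(\sigma)-1\big)$ by $2$ yields
\[
g=1-n+\frac{1}{2}\sum_{\sigma}\frac{nr_{\sigma}}{o(\sigma)}\big(o(\sigma)-1\big),
\]
not the displayed statement; you halved the $-2n$ but silently dropped the $\tfrac{1}{2}$ on the ramification sum. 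A sanity check makes the discrepancy concrete: for a hyperelliptic curve ($n=2$, a single non-trivial $\sigma$ with $o(\sigma)=2$ and $r_{\sigma}=2g+2$) the formula with the $\tfrac{1}{2}$ returns $g$, whereas the formula without it returns $2g+1$.

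In fairness, the statement you were asked to prove is itself missing this factor: the version of the genus formula that the paper actually uses later---in the proof of Proposition \ref{mersect}, where $\sum_{\sigma}\sum_{j=1}^{r_{\sigma}}\frac{o(\sigma)-1}{2o(\sigma)}$ is identified with $\frac{g+n-1}{n}$---is precisely the one with the $\tfrac{1}{2}$. So the correct conclusion of your (otherwise correct) computation is the corrected formula, and as written your argument does not establish the displayed identity; no argument can, since it is false as printed. You should either flag the misprint or carry the $\tfrac{1}{2}$ through to the end.
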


Denote the character group (or dual group) $\mathrm{Hom}(A,\mathbb{C}^{\times})$ of $A$, whose order is also $n$, by $\widehat{A}$, and its trivial element by $\mathbf{1}$. To every character $\chi\in\widehat{A}$ and every $\sigma \in A$ we determine a number $u_{\chi,\sigma}$ by the conditions
\begin{equation}
0 \leq u_{\chi,\sigma}<o(\sigma)\quad\mathrm{and}\quad\chi(\sigma)=\mathbf{e}\big(\tfrac{u_{\chi,\sigma}}{o(\sigma)}\big),\quad\mathrm{where}\quad\mathbf{e}(w)=e^{2\pi iw}\mathrm{\ for\ }w\in\mathbb{C}. \label{uchisigma}
\end{equation}
Sections 2 and 5 of \cite{[KZ]} then prove the following result.
\begin{prop}
The number $t_{\chi}=\sum_{\sigma \neq Id_{X}}\frac{r_{\sigma}u_{\chi,\sigma}}{o(\sigma)}$ is a non-negative integer for every $\chi\in\widehat{A}$, which vanishes if and only if $\chi=\mathbf{1}$. There is a meromorphic function $y_{\chi}$ and a meromorphic differential $\psi_{\chi}$ on $X$, both unique up to scalar multiples and on both of which $A$ operates via $\chi$, such that \[\mathrm{div}(y_{\chi})=\sum_{\sigma \neq Id_{X}}\sum_{j=1}^{r_{\sigma}}u_{\chi,\sigma}z^{-1}(\lambda_{\sigma,j})-t_{\chi}z^{-1}(\infty)\] and \[\mathrm{div}(\psi_{\chi})=\sum_{\sigma \neq Id_{X}}\sum_{j=1}^{r_{\sigma}}\big(o(\sigma)-1-u_{\overline{\chi},\sigma})z^{-1}(\lambda_{\sigma,j})+(t_{\overline{\chi}}-2)z^{-1}(\infty).\] \label{decomgen}
\end{prop}
The functions $y_{\chi}$ and the differentials $\psi_{\chi}$ from Proposition \ref{decomgen} were denoted by $h_{\chi}$ and $\omega_{\chi}$ respectively in \cite{[KZ]} (but here the letters $h$ and $\omega$ are used for different objects, whence the change in notation), and one can take $\psi_{\chi}=\frac{dz}{y_{\overline{\chi}}}$ for every $\chi\in\widehat{A}$. For an integer $d\geq-1$ we set $\mathcal{P}_{\leq d}(z)$ to be the space of polynomials of degree not exceeding $d$ in $z$, of dimension $d+1$ (so that the zero space is $\mathcal{P}_{\leq-1}(z)$), and then the space of differentials of the first kind on $X$ is described by Corollary 5.6 of \cite{[KZ]} and the paragraph following it as follows.
\begin{prop}
The space of differentials of the first kind on $X$ decomposes as $\bigoplus_{\mathbf{1}\neq\chi\in\widehat{A}}\mathcal{P}_{\leq t_{\overline{\chi}}-2}(z)\psi_{\chi}$. \label{holdif}
\end{prop}
The numbers $u_{\chi,\sigma}$ and $t_{\chi}$ and the functions $y_{\chi}$ satisfy a few useful relations.
\begin{lem}
For any $\sigma \in A$ and character $\chi\in\widehat{A}$ we have \[u_{\chi,\sigma}+u_{\overline{\chi},\sigma}=\left\{\begin{array}{ll}0 & \mathrm{If\ }\chi(\sigma)=1 \\ o(\sigma) & \mathrm{otherwise}\end{array}\right.\quad\mathrm{and}\quad t_{\chi}+t_{\overline{\chi}}=\textstyle{\sum_{\sigma\not\in\ker\chi}r_{\sigma}}.\] In addition, for any $\chi\in\widehat{A}$ there is the equality
\[\mathrm{div}(y_{\chi}y_{\overline{\chi}})=\mathrm{div}\big(\textstyle{\prod_{\sigma\not\in\ker\chi}\prod_{j=1}^{r_{\sigma}}(z-\lambda_{\sigma,j})}\big).\] \label{chicomp}
\end{lem}

\smallskip

In order to see these expressions more explicitly, recall from the structure theorem for finite abelian groups that if
\[A=\prod_{l=1}^{q}H_{l},\quad\mathrm{where\ }H_{l}\mathrm{\ is \ cyclic\ of \ order\ }m_{l}\mathrm{\ with\ generator\ }\rho_{l}\mathrm{\ and\ }\prod_{l=1}^{q}m_{l}=n,\] then $X$ is the fibered product of the $Z_{m_{l}}$-curves
\begin{equation}
Y_{l}=\big\{(z,w_{l})\in\mathbb{CP}^{1}\times\mathbb{CP}^{1}\big|w_{l}^{m_{l}}=F_{l}(z)\big\}\mathrm{\ for\ some\ monic\ }F_{l}\in\mathbb{C}(z)^{\times},\ 1 \leq l \leq q \label{Zmlcurve}
\end{equation}
(where a monic rational function is the quotient of monic polynomials), namely
\[X=\big\{(z,w_{1},\ldots,w_{q})\in(\mathbb{CP}^{1})^{q+1}\big|w_{l}^{m_{l}}=F_{l}(z),\ 1 \leq l \leq q\big\}\] (with some conditions on the $F_{l}$s for the irreducibility of the $Y_{l}$s and of $X$---see Section 1 of \cite{[KZ]} for more details). For $1 \leq l \leq q$ we define $\rho_{l} \in H_{l} \subseteq A$ by
\begin{equation}
\rho_{l}w_{l}=\mathbf{e}\big(\tfrac{1}{m_{l}}\big)w_{l}\quad\mathrm{and}\quad \rho_{l}z=z\quad(\mathrm{as\ well\ as}\quad\rho_{l}w_{k}=w_{k}\quad\mathrm{for\ }k \neq l\mathrm{\ on\ }X). \label{basisprod}
\end{equation}
Assume that \[\big\{\lambda\in\mathbb{C}\big|F_{l}(\lambda)=0\mathrm{\ or\ }F_{l}(\lambda)=\infty\mathrm{\ for\ some\ }1 \leq l \leq q\big\}\subseteq\{\lambda_{i}|1 \leq i \leq s\},\] so that for every $1 \leq l \leq q$ we have an expression of the form
\begin{equation}
\textstyle{F_{l}(z)=\prod_{j=1}^{s}(z-\lambda_{i})^{\alpha_{li}},\quad\mathrm{with}\quad\alpha_{li}\in\mathbb{Z}\quad\mathrm{such\ that}\quad m_{l}\big|\sum_{i=1}^{s}\alpha_{li}} \label{Flprod}
\end{equation}
(to avoid branching over $\infty$). Recall that
\begin{equation}
\mathrm{for\ }\chi\in\widehat{A}\mathrm{\ and\ }1 \leq l \leq q\mathrm{\ there\ is\ }e_{l}\in\mathbb{Z}\mathrm{\ such\ that \ } \chi(\rho_{l})=\mathbf{e}\big(\tfrac{e_{l}}{m_{l}}\big), \label{charprod}
\end{equation}
that $e_{l}$ is unique up to $m_{l}\mathbb{Z}$, and that every choice of $e_{l}$s yields an element of $\widehat{A}$. More explicitly we define, dually to Equation \eqref{basisprod}, the elements
\begin{equation}
\eta_{l}\in\widehat{A}\quad\mathrm{by}\quad\eta_{l}(\rho_{l})=\mathbf{e}\big(\tfrac{1}{m_{l}}\big)\quad\mathrm{and}\quad\eta_{l}(\rho_{k})=1\mathrm{\ when\ }k \neq l \label{basischar}
\end{equation}
(so that the character $\chi$ from Equation \eqref{charprod} is $\prod_{l=1}^{q}\eta_{l}^{e_{l}}$). In addition, we denote the \emph{integral part} of the real number $x$ (namely the largest integer that does not exceed $x$) by $\lfloor x \rfloor$, and the \emph{fractional part} $x-\lfloor x \rfloor$ of $x$ by $\{x\}$. Then Proposition 1.3 of \cite{[KZ]} and the discussion surrounding it yields the following description of the expressions from Propositions \ref{liftcirc} and \ref{decomgen}.
\begin{prop}
With these normalizations, Proposition \ref{liftcirc} associates to the point $\lambda_{i}$ the element $\prod_{l=1}^{q}\rho_{l}^{\alpha_{li}}$ of $A=\prod_{l=1}^{q}H_{l}$, namely the points $\lambda_{\sigma,j}$ for $\sigma=\prod_{l=1}^{q}\rho_{l}^{d_{l}}$ are those points $\lambda_{i}$ for which $\alpha_{li} \in d_{l}+m_{l}\mathbb{Z}$ for every $1 \leq l \leq q$. Any point in $\mathbb{CP}^{1}$ that is not $\lambda_{i}$ for some $1 \leq i \leq s$ is sent to $Id_{X}$. If $\chi\in\widehat{A}$ and $\{e_{l}\}_{l=1}^{q}\in\mathbb{Z}^{q}$ satisfy Equation \eqref{charprod} then we can take \[y_{\chi}=\prod_{l=1}^{q}y_{l}^{e_{l}}\Bigg/\prod_{i=1}^{s}(z-\lambda_{i})^{\lfloor\sum_{l=1}^{q}e_{l}\alpha_{li}/m_{l}\rfloor}.\] If $P \in X$ satisfies $z(P)=\lambda_{i}$ then $y_{\chi}$ has the same order at $P$ as the local function $(z-\lambda_{i})^{\{\sum_{l=1}^{q}e_{l}\alpha_{li}/m_{l}\}}$, and when $\lambda_{i}$ is $\lambda_{\sigma,j}$ for some $\sigma \in A$ and $1 \leq j \leq r_{\sigma}$ then this order is $o(\sigma)\big\{\sum_{l=1}^{q}\frac{e_{l}\alpha_{li}}{m_{l}}\big\}$. The number $t_{\chi}$ equals $\sum_{i=1}^{s}\big\{\sum_{l=1}^{q}\frac{e_{l}\alpha_{li}}{m_{l}}\big\}$. \label{fibprod}
\end{prop}
Note that with the choices from Proposition \ref{fibprod} the equality involving $y_{\chi}y_{\overline{\chi}}$ in Lemma \ref{chicomp} holds also as an equality of functions (not only of divisors).

\begin{rmk}
It is convenient to normalize the $Z_{m_{l}}$-equation for $Y_{l}$ in Equation \eqref{Zmlcurve}, like in \cite{[FZ]}, such that $F_{l}$ is a (monic) polynomial containing no $\lambda_{i}$ to a power exceeding $m_{l}$, i.e., the powers $\alpha_{li}$ from Equation \eqref{Flprod} satisfy $0\leq\alpha_{li}<m_{l}$ for every $1 \leq i \leq s$ (and the condition of divisibility by $m_{l}$). Then Proposition \ref{fibprod} shows that $y_{l}$ coincides with $y_{\eta_{l}}$ for $\eta_{l}$ from Equation \eqref{basischar}, and for $e\in\mathbb{Z}$ the function $y_{\eta_{l}^{e}}$ is \[\mathrm{the\ normalized\ }e\mathrm{th\ power\ }\textstyle{y_{l}^{(e)}=y_{l}^{e}\big/\prod_{i=1}^{s}(z-\lambda_{i})^{e\alpha_{li}-m_{l}\lfloor e\alpha_{li}/m_{l} \rfloor}}\mathrm{\ of\ }y_{l}.\] The expression for $y_{l}^{(e)}$ remains invariant when $e$ is changed by a multiple of $m_{l}$. In general, when $\chi=\prod_{l=1}^{q}\eta_{l}^{e_{l}}$ as in Equations \eqref{charprod} and \eqref{basischar} the function $y_{\chi}$ is $\prod_{l=1}^{q}y_{l}^{(e_{l})}$ divided by a polynomial. On the other hand, \cite{[Ko3]} considers a situation where the branching values of the $Y_{l}$s are mutually disjoint, namely for every $1 \leq i \leq s$ the exponent $\alpha_{li}$ can be positive for at most one index $l$. Then for such $\chi$ we have \[\textstyle{y_{\chi}=\prod_{l=1}^{q}y_{l}^{(e_{l})},\qquad\psi_{\overline{\chi}}=\frac{dz}{y_{\chi}}=\frac{dz}{\prod_{l=1}^{q}y_{l}^{(e_{l})}},\qquad\mathrm{and}\qquad t_{\chi}=\sum_{l=1}^{q}\big\{\frac{e_{l}\alpha_{li}}{m_{l}}\big\}},\] and Proposition \ref{holdif} implies that a basis for the differentials of the first kind on $X$ is given by
\[\Bigg\{\frac{z^{k}dz}{\prod_{l=1}^{q}y_{l}^{(e_{l})}}\bigg|0 \leq e_{l}<m_{l},\ \sum_{l=1}^{q}e_{l}>0,\ 0 \leq k \leq t_{\chi}-2\Bigg\}.\] \label{normFl}
\end{rmk}
The quotient $\frac{dz}{y_{l}^{(e)}}$, where $y_{l}^{(e)}$ is the normalized $e$th power from Remark \ref{normFl}, is the differential denoted by $\omega_{k}$ in the cyclic case considered in \cite{[Z]}. While the fibered product structure is not necessary for proving any of the following results, it may be useful for some extreme cases (see Remark \ref{eoford2} below).

\smallskip

We shall need $A$-invariant divisors $\Delta$ of degree $g-1$ on $X$ that are not linearly equivalent to any positive divisor (this condition is expressed as $r(-\Delta)=0$ in \cite{[KZ]} and others). Such divisors are called \emph{non-special} (as their index of specialty vanishes, by the Riemann--Roch Theorem). It is easy to see, via Equation \eqref{simpledivs}, that any $A$-invariant divisor on $X$ can be written uniquely as the divisor of a rational function of $z$ plus a divisor of the form
\begin{equation}
\Delta=\sum_{\sigma \neq Id_{X}}\sum_{j=1}^{r_{\sigma}}\beta_{\sigma,j}^{\Delta}z^{-1}(\lambda_{\sigma,j})-h^{\Delta}z^{-1}(\infty),\quad0\leq\beta_{\sigma,j}^{\Delta}<o(\sigma),\quad h^{\Delta}\in\mathbb{Z} \label{normdiv}
\end{equation}
(see Proposition 2.4 and Corollary 2.8 of \cite{[KZ]} for the more general setting). An $A$-invariant divisor that can be expressed as in Equation \eqref{normdiv} is called \emph{normalized}. Adding a divisor of the form $\mathrm{div}(y_{\chi})$ from Proposition \ref{decomgen} to $\Delta$ clearly yields another $A$-invariant divisor that is linearly equivalent to $\Delta$, and Lemma 2.1 of \cite{[KZ]} implies that the $A$-invariant divisors that are linearly equivalent to $\Delta$ are precisely those that can be obtained from $\Delta+\mathrm{div}(y_{\chi})$ for the various $\chi\in\widehat{A}$ by adding divisors of rational functions of $z$. There is precisely one such normalized divisor for every $\chi$, but note that if $\Delta$ itself is normalized, $\Delta+\mathrm{div}(y_{\chi})$ need no longer be normalized. It would be useful for us to find its normalized form.

In order do so explicitly we write our divisor $\Delta$ as in Equation \eqref{normdiv}, and for a character $\chi\in\widehat{A}$, an element $Id_{X}\neq\sigma \in A$, and an index $1 \leq j \leq r_{\sigma}$ we define
\begin{equation}
\varepsilon_{\Delta,\sigma,j,\chi}=\left\{\begin{array}{ll}1 & \mathrm{If\ }\beta_{\sigma,j}^{\Delta} \geq o(\sigma)-u_{\chi,\sigma} \\ 0 & \mathrm{otherwise,}\end{array}\right.\mathrm{and\ }\beta_{\sigma,j}^{\chi\Delta}=\beta_{\sigma,j}^{\Delta}+u_{\chi,\sigma}-o(\sigma)\varepsilon_{\Delta,\sigma,j,\chi}. \label{epsDelsigjchi}
\end{equation}
Note that we have $0\leq\beta_{\sigma,j}^{\chi\Delta}<o(\sigma)$ in Equation \eqref{epsDelsigjchi} by the definition of $\varepsilon_{\Delta,\sigma,j,\chi}$. For our $\Delta$ and $\chi$ we also set
\begin{equation}
p_{\Delta,\chi}(z)=\prod_{\sigma \neq Id_{X}}\prod_{j=1}^{r_{\sigma}}(z-\lambda_{\sigma,j})^{\varepsilon_{\Delta,\sigma,j,\chi}}=\prod_{\sigma \neq Id_{X}}\prod_{\{j|\beta_{\sigma,j}^{\Delta} \geq o(\sigma)-u_{\chi,\sigma}\}}(z-\lambda_{\sigma,j}), \label{pDeltachi}
\end{equation}
and deduce the following lemma.
\begin{lem}
If $\Delta$ is a normalized $A$-invariant divisor and $\chi$ is in $\widehat{A}$ then the normalized divisor $\chi\Delta$ arising from $\Delta+\mathrm{div}(y_{\chi})$ is
\[\Delta+\mathrm{div}(y_{\chi})-\mathrm{div}(p_{\Delta,\chi})=\sum_{\sigma \neq Id_{X}}\sum_{j=1}^{r_{\sigma}}\beta_{\sigma,j}^{\chi\Delta}z^{-1}(\lambda_{\sigma,j})-(h^{\Delta}+t_{\chi}-\deg p_{\Delta,\chi})z^{-1}(\infty).\] The map sending $\chi$ and $\Delta$ to $\chi\Delta$ defines a free action of $\widehat{A}$ on the set of normalized $A$-invariant divisors, such that two such divisors are linearly equivalent if and only if they lie in the same $\widehat{A}$-orbit. \label{actAdual}
\end{lem}

\begin{proof}
The explicit expression for the combination defining $\chi\Delta$ follows from the formulae for $\Delta$ in Equation \eqref{normdiv}, for $\mathrm{div}(y_{\chi})$ in Proposition \ref{decomgen}, and for $p_{\Delta,\chi}$ in Equation \eqref{pDeltachi}, together with the definition of $\beta_{\sigma,j}^{\chi\Delta}$ in Equation \eqref{epsDelsigjchi}. The fact that this divisor is normalized (hence indeed equals $\chi\Delta$) follows from the bounds on the latter coefficients. The formula for $\chi\Delta$ and the uniqueness of normalizations via rational functions of $z$ imply that $(\chi,\Delta)\mapsto\chi\Delta$ is indeed an action of $\widehat{A}$, and the relation to linear equivalence was shown to follow from Lemma 2.1 of \cite{[KZ]}. This proves the lemma.
\end{proof}
The normalization of $\chi\Delta$ in Lemma \ref{actAdual} justifies a fortiori the notation $\beta_{\sigma,j}^{\chi\Delta}$ in Equation \eqref{epsDelsigjchi}.

Recall that we are interested in those normalized $A$-invariant divisors that have degree $g-1$ and are non-special. These divisors are now described in Theorem 4.5 of \cite{[KZ]}. As the set $B_{\sigma,i}$ from that theorem is $\{j|\beta_{\sigma,j}^{\Delta}=o(\sigma)-1-i\}$, its statement reads as follows.
\begin{thm}
The divisor $\Delta$ from Equation \eqref{normdiv} satisfies $\deg\Delta=g-1$ and $r(-\Delta)=0$ if and only if $h^{\Delta}=1$ and the equality \[\sum_{\sigma \neq Id_{X}}\big|\{j|\beta_{\sigma,j}^{\Delta} \geq o(\sigma)-u_{\chi,\sigma}\}\big|=t_{\chi}\] holds for every $\chi\in\widehat{A}$. \label{nonspdiv}
\end{thm}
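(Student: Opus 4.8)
The plan is to reduce the statement about $r(-\Delta)=0$ to a statement about the non-vanishing of certain holomorphic differentials, exploiting the explicit basis from Proposition \ref{holdif}. By Riemann--Roch, $r(-\Delta)=0$ for a degree $g-1$ divisor $\Delta$ is equivalent to $i(\Delta)=0$, i.e. to the assertion that there is no nonzero holomorphic differential $\omega$ on $X$ with $\mathrm{div}(\omega)\geq\Delta$. So the first step is to compute the degree of $\Delta$: using the degrees $\frac{n}{o(\sigma)}$ of $f^{-1}(\lambda_{\sigma,j})$ and $n$ of $f^{-1}(\infty)$, the degree of $\Delta$ is $\sum_\sigma\sum_{j=1}^{r_\sigma}\frac{n\beta_{\sigma,j}}{o(\sigma)}-pn$, and I would match this against the formula for $g-1$ in Proposition \ref{genus}. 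This forces one linear relation among the $\beta_{\sigma,j}$ and $p$; I expect it to pin down $p=1$ once combined with the $\chi$-conditions (summing the displayed equality over all $\chi$ and using $\sum_\chi t_\chi$ together with a counting identity should recover exactly the degree relation), but at this stage I would simply keep $p$ and the degree relation as constraints.

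The core step is the $\widehat{A}$-eigenspace decomposition. Since $A$ acts on $X$ commuting with $f$, it acts on $H^0(X,\Omega^1)$, and Proposition \ref{holdif} exhibits the $\chi$-isotypic piece as $\mathcal{P}_{\leq t_{\overline\chi}-2}(z)\psi_\chi$ for each nontrivial $\chi$. A differential $\omega$ with $\mathrm{div}(\omega)\geq\Delta$ need not be an eigenvector, but $\Delta$ is $A$-invariant, so the condition $\mathrm{div}(\omega)\geq\Delta$ is preserved by the $A$-action; hence $\omega$ has a nonzero $\chi$-component for some $\chi$ satisfying the same inequality, and it suffices to ask when a differential of the form $P(z)\psi_\chi$ with $\deg P\leq t_{\overline\chi}-2$ satisfies $\mathrm{div}(P(z)\psi_\chi)\geq\Delta$. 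Now I plug in the divisor of $\psi_\chi$ from the formula in Section \ref{Divs}: at the points over $\lambda_{\sigma,j}$, $\psi_\chi$ has order $o(\sigma)-1-u_{\overline\chi,\sigma}$, and $P(z)$ contributes a nonnegative even-across-the-fiber order, actually order equal to (multiplicity of $\lambda_{\sigma,j}$ as a root of $P$) with no ramification weighting since $P$ is pulled back from the base and $\lambda_{\sigma,j}$ is a branch value — so $P$ contributes order $o(\sigma)\cdot(\text{ord}_{\lambda_{\sigma,j}}P)$ at each such point. The inequality $\mathrm{div}(P(z)\psi_\chi)\geq\Delta$ at the fiber over $\lambda_{\sigma,j}$ therefore reads $o(\sigma)-1-u_{\overline\chi,\sigma}+o(\sigma)\,\mathrm{ord}_{\lambda_{\sigma,j}}P\geq\beta_{\sigma,j}$, and at infinity it reads $t_{\overline\chi}-2-\deg P\geq -p$. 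The key arithmetic observation is that $o(\sigma)-1-u_{\overline\chi,\sigma}\geq\beta_{\sigma,j}$ with $0\leq\beta_{\sigma,j}<o(\sigma)$ is possible without help from $P$ precisely when $\beta_{\sigma,j}<o(\sigma)-u_{\overline\chi,\sigma}$, i.e. $\beta_{\sigma,j}\leq o(\sigma)-1-u_{\overline\chi,\sigma}$; when $\beta_{\sigma,j}\geq o(\sigma)-u_{\overline\chi,\sigma}$ one needs $P$ to vanish at $\lambda_{\sigma,j}$ (to order exactly $1$ suffices, since then $o(\sigma)$ is subtracted). So $P$ must vanish at all $\lambda_{\sigma,j}$ in the set $\{j:\beta_{\sigma,j}\geq o(\sigma)-u_{\overline\chi,\sigma}\}$; the number of such forced zeros, summed over $\sigma$, is exactly the left-hand side of the displayed equation. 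The existence of a nonzero such $P$ of degree $\leq t_{\overline\chi}-2+ p -1$... — more precisely, after accounting for the condition at $\infty$, the space of admissible $P$ is nonzero iff the number of forced zeros does not exceed the available degree, and with $p=1$ the available degree is $t_{\overline\chi}-1$, so a nonzero $P$ exists iff $\sum_\sigma|\{j:\beta_{\sigma,j}\geq o(\sigma)-u_{\overline\chi,\sigma}\}|\leq t_{\overline\chi}-1$.

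Putting it together: $i(\Delta)\neq 0$ iff there is some nontrivial $\chi$ with $\sum_\sigma|\{j:\beta_{\sigma,j}\geq o(\sigma)-u_{\overline\chi,\sigma}\}|\leq t_{\overline\chi}-1 < t_{\overline\chi}$, so $r(-\Delta)=0$ iff for every nontrivial $\chi$ we have $\sum_\sigma|\{j:\beta_{\sigma,j}\geq o(\sigma)-u_{\overline\chi,\sigma}\}|\geq t_{\overline\chi}$; renaming $\overline\chi\mapsto\chi$ gives the stated inequality in one direction. For the reverse inequality (that the sum is in fact $\leq t_\chi$, hence $=t_\chi$) and for the claim $p=1$, I would sum the inequalities over all characters and compare with the degree relation derived in the first step: the average of the left-hand sides over nontrivial $\chi$, via the identity $\sum_{\chi}u_{\chi,\sigma}=$ (a Dedekind-type sum that telescopes, or simply $\frac{(o(\sigma)-1)}{2}\cdot\frac{n}{o(\sigma)}\cdot o(\sigma)$-type bookkeeping from Lemma \ref{chicomp}), must equal the average of the $t_\chi$, forcing equality characterwise and $p=1$ simultaneously. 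The main obstacle I anticipate is precisely this global counting step — ensuring that the local inequalities at each $\chi$ are individually tight rather than merely tight on average — which is where one genuinely needs the degree computation and an identity of the form $\sum_{\mathbf1\neq\chi}t_\chi = \sum_\sigma r_\sigma\cdot\frac{(o(\sigma)-1)}{2}$ (equivalently the Riemann--Hurwitz count of Proposition \ref{genus} rewritten via Lemma \ref{chicomp}); the local eigenspace analysis itself is routine given Propositions \ref{genus} and \ref{holdif}. This is, of course, exactly the content of Theorem 4.5 of \cite{[KZ]} being quoted, so I would in practice cite that proof, but the sketch above is how I would reconstruct it.
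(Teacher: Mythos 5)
The paper never proves Theorem \ref{nonspdiv}: it imports it verbatim from Theorem 4.5 of \cite{[KZ]} and only explains the dictionary between the two notations, so there is no in-paper argument to measure you against. Your reconstruction is the natural one and is essentially sound in outline: reduce $r(-\Delta)=0$ to $i(\Delta)=0$ via Riemann--Roch, use the $A$-invariance of $\Delta$ to decompose the space of differentials dominating $\Delta$ into $\widehat{A}$-isotypic pieces, read off the forced zeros of the polynomial $P$ from the orders of $\psi_{\chi}$ and of $z-\lambda_{\sigma,j}$ at the fiber over $\lambda_{\sigma,j}$ (your observation that one simple zero of $P$ always suffices, since the deficit is at most $u_{\overline{\chi},\sigma}<o(\sigma)$, is the right local point), and close the argument by summing the resulting inequalities over $\widehat{A}$ against $\deg\Delta$.

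Two points need repair before this is a proof. First, once $p\geq1$ the differentials $\omega$ with $\mathrm{div}(\omega)\geq\Delta$ are permitted poles of order up to $p$ over $\infty$, so Proposition \ref{holdif} must be extended to that larger space; the extension enlarges each degree bound to $t_{\overline{\chi}}-2+p$, as you use, but it also brings in the \emph{trivial} character, whose isotypic piece consists of the pullbacks $Q(z)\,dz$ with $\deg Q\leq p-2$. This component is not decorative: summing your inequalities over the nontrivial characters alone only yields the lower bound $p\geq1$ (the aggregate count does not exclude $p=2$ when $\deg\Delta=g-1$), and it is precisely the constant multiples of $dz$ that force $p\leq1$. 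Since your sketch quantifies only over nontrivial $\chi$, the claim $p=1$ is not yet established. Second, the parenthetical identity $\sum_{\mathbf{1}\neq\chi}t_{\chi}=\sum_{\sigma}r_{\sigma}\tfrac{o(\sigma)-1}{2}$ is off by the factor $\tfrac{n}{o(\sigma)}$; summing the relation $t_{\chi}+t_{\overline{\chi}}=\sum_{\sigma\notin\ker\chi}r_{\sigma}$ of Lemma \ref{chicomp} over $\widehat{A}$ gives $\sum_{\chi}t_{\chi}=\sum_{\sigma}\tfrac{nr_{\sigma}(o(\sigma)-1)}{2o(\sigma)}=g-1+n$ by Riemann--Hurwitz, while the sum over $\chi$ of your left-hand sides is $\sum_{\sigma,j}\tfrac{n\beta_{\sigma,j}}{o(\sigma)}=\deg\Delta+pn$. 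With that corrected identity, and with the trivial character included, the bookkeeping closes exactly as you anticipate: $\deg\Delta=g-1$ forces every inequality to be an equality and $p=1$ simultaneously.
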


An simple consequence of Lemma \ref{actAdual} and Theorem \ref{nonspdiv} is the following.
\begin{cor}
If the divisor $\Delta$ from Equation \eqref{normdiv} satisfies the equalities from Theorem \ref{nonspdiv} then so does the divisor $\chi\Delta$ from Lemma \ref{actAdual} for any $\chi\in\widehat{A}$. Hence the set of non-special normalized $A$-invariant divisors of degree $g-1$ on $X$ is a union of finitely many $\widehat{A}$-orbits. \label{chiDeltanonsp}
\end{cor}

\begin{proof}
As $\Delta$ and $\chi\Delta$ are linearly equivalent in Lemma \ref{actAdual}, they have the same degree and they are non-special together. Hence the first assertion follows immediately from Theorem \ref{nonspdiv}. Since the set of possible choices for the numbers $\beta_{\sigma,j}^{\Delta}$ in Equation \eqref{normdiv} is finite, the equality $h^{\Delta}=1$ in Theorem \ref{nonspdiv} implies the finiteness of the set of divisors in question. The second assertion now follows from the first one, with this finiteness property. This proves the corollary.
\end{proof}
The part about the invariance of the equality $h^{\Delta}=1$ in the first assertion of Corollary \ref{chiDeltanonsp} under the action of $\widehat{A}$ can also be seen in Lemma \ref{actAdual}, since Theorem \ref{nonspdiv} shows that the degree of the polynomial $p_{\Delta,\chi}$ from Equation \eqref{pDeltachi} is $t_{\chi}$ in this case. For the finiteness property in a more general setting, see Corollary 3.3 of \cite{[KZ]}.

\section{Additive Inverses of Torsion Characteristics \label{Chars}}

This section presents the basic properties of theta functions on abelian covers of $\mathbb{CP}^{1}$, whose characteristics are associated with the divisors from Theorem \ref{nonspdiv}.

\smallskip

Consider a (compact) Riemann surface $X$, with a canonical homology basis $a_{i}$ and $b_{i}$ with $1 \leq i \leq g$. Let $v_{s}$, $1 \leq s \leq g$ be the basis for the differentials on the first kind on $X$ that is dual to the cycles $a_{i}$, $1 \leq i \leq g$ (namely $\int_{a_{i}}v_{s}=\delta_{i,s}$), and let $\tau$ be the resulting period matrix with entries $\int_{b_{i}}v_{s}$, which is symmetric with positive definite imaginary part. Then any vector $e\in\mathbb{C}^{g}$ has a unique presentation as $\tau\frac{\varepsilon}{2}+I\frac{\delta}{2}$ for real vectors $\varepsilon$ and $\delta$, and one defines, for $\tau$ as above and $\zeta\in\mathbb{C}^{g}$, the \emph{theta function with characteristics} by
\[\theta[e](\zeta,\tau)=\theta\big[{\textstyle{\varepsilon \atop \delta}}\big](\zeta,\tau)=\sum_{\mu\in\mathbb{Z}^{g}}\mathbf{e}\big[\big(\mu+\tfrac{\varepsilon}{2}\big)^{t}\tfrac{\tau}{2}\big(\mu+\tfrac{\varepsilon}{2}\big)
+\big(\mu+\tfrac{\varepsilon}{2}\big)^{t}\big(\zeta+\tfrac{\delta}{2}\big)\big].\] All of these functions satisfy the \emph{heat equation}
\begin{equation}
\frac{\partial^{2}\theta}{\partial\zeta_{r}\partial\zeta_{s}}=\left\{\begin{array}{ll}4\pi i\frac{\partial\theta}{\partial\tau_{rs}} & \mathrm{if\ }r=s \\ 2\pi i\frac{\partial\theta}{\partial\tau_{rs}} & \mathrm{if\ }r \neq s\end{array}\right.=2\pi i(1+\delta_{r,s})\frac{\partial\theta}{\partial\tau_{rs}} \label{heat}
\end{equation}
for every $r$ and $s$. For the basic properties of these theta functions see Chapter 6 of \cite{[FK]}, Section 1.3 of \cite{[FZ]}, and some of the references therein.

Let $\Lambda_{\tau}$ be the lattice in $\mathbb{C}^{g}$ that is generated by the columns of the identity matrix $I$ and the columns of $\tau$, and we identify the Jacobian $J(X)$ of $X$ with $\mathbb{C}^{g}/\Lambda_{\tau}$ as usual. We denote the Abel--Jacobi map on divisors of degree 0 on $X$ (expressed via the basis $\{v_{s}\}_{s=1}^{g}$ for the differentials of the first kind on $X$) by $u$, and given a base point $R \in X$ we denote the Abel--Jacobi map with base point $R$, which takes any divisor on $X$ to $u(\Delta-\deg\Delta \cdot R)$, by $u_{R}$. We recall that if $K_{R}$ is the vector of Riemann constants associated with the base point $R$ and $\Delta$ is a divisor of degree $g-1$ then \[u(\Delta)+K=u_{R}(\Delta)+K_{R}=u\big(\Delta-(g-1)R\big)+K_{R}\mathrm{\ is\ independent\ of\ }R,\] whence the notation $u(\Delta)+K$ not including $R$ (for a proof of this independence see, e.g., Theorem 1.12 of \cite{[FZ]}, though this result is of course much older).

We now state the Riemann Vanishing Theorem, which is proved as Theorem 1.9 of \cite{[FZ]} (among others, including much older references).
\begin{thm}
We have $\theta[0](\zeta,\tau)=0$ with $\zeta\in\mathbb{C}^{g}$ precisely when the element $\zeta+\Lambda_{\tau} \in J(X)$ equals $u(\Delta)+K$ for a positive divisor $\Delta$ of degree $g-1$. \label{RiemVan}
\end{thm}
A related result, which is a consequence Theorem \ref{RiemVan}, is the following.
\begin{prop}
For $e\in\mathbb{C}^{g}$ and $P \in X$ we have a dichotomy: Either the expression $\theta[e]\big(u(P-Q)\big)$ vanishes identically for $Q \in X$, or the divisor $\Xi_{e,P}$ of its zeros is non-special of degree $g$, and it is characterized by the equality \[u(\Xi_{e,P}-P)+K=u_{P}(\Xi_{e,P})+K_{P}=e+\Lambda_{\tau}.\] \label{thetaonX}
\end{prop}
We recall that the expression from Proposition \ref{thetaonX} is not a function on $X$, but rather a section of a non-trivial line bundle on that Riemann surface. By the evenness of theta functions with respect to characteristics, considering $\theta[e]\big(u(P-Q)\big)$ as a function of $P$ (with $Q$ fixed) yields either identically zero or an expression with divisor $\Xi_{-e,Q}$ of zeros. We shall also be needing the following result, appearing, among others, as Proposition 2.1 of \cite{[Z]}.
\begin{prop}
For $R \in X$ and a non-zero meromorphic differential $\omega$ on $X$ we have $\deg\mathrm{div}(\omega)=2g-2$ and \[u_{R}\big(\mathrm{div}(\omega)\big)=u\big(\mathrm{div}(\omega)-(2g-2)R\big)=-2K_{R}.\] \label{candivAJ}
\end{prop}

\smallskip

For any point $Q$ on a compact Riemann surface $X$ and any non-special positive divisor $\Xi$ of degree $g$ not containing $Q$ in its support, Theorem 2.3 of \cite{[Z]} defines another divisor $N_{Q}(\Xi)$ with the same properties such that the elements $u_{Q}(\Xi)+K_{Q}$ and $u_{Q}\big(N_{Q}(\Xi)\big)+K_{Q}$ are inverses in $J(X)$. This operation is expressed in the language of divisors of degree $g-1$ in the following immediate consequence of Proposition \ref{candivAJ} and the Riemann--Roch Theorem.
\begin{lem}
If $\Delta$ is a divisor of degree $g-1$ on $X$ with $r(-\Delta)=0$ and $\omega$ is any non-zero meromorphic differential on $X$ then $\Gamma=\mathrm{div}(\omega)-\Delta$ also has degree $g-1$ and satisfies $r(-\Gamma)=0$, and the elements $u(\Delta)+K$ and $u(\Gamma)+K$ of $J(X)$ are additive inverses. If $Q$ is any point on $X$ then there is a unique positive divisor $\Xi_{\Delta,Q}$ of degree $g$, depending only on the linear equivalence class of $\Delta$, that is linearly equivalent to $\Delta+Q$, and this divisor does not contain $Q$ in its support. Moreover, $\Xi_{\Gamma,Q}$ is defined similarly, it is independent of the choice of $\omega$, and it equals $N_{Q}(\Xi_{\Delta,Q})$. \label{negation}
\end{lem}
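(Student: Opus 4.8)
The plan is to route the whole statement through the Riemann--Roch theorem, Serre duality, and Proposition \ref{candivAJ}, keeping careful track of base points. For the first assertion, note that $\mathrm{div}\,\omega$ has degree $2g-2$, so $\deg\Gamma=(2g-2)-(g-1)=g-1$; and since $\mathrm{div}\,\omega$ is a canonical divisor, Serre duality identifies the index of specialty $i(\Delta)$ with $r(-\Gamma)$, while Riemann--Roch for the degree $g-1$ divisor $\Delta$ gives $r(-\Delta)-i(\Delta)=0$, whence $r(-\Gamma)=i(\Delta)=r(-\Delta)=0$. For the inverse relation I would expand, for any base point $R$,
\[[u(\Delta)+K]+[u(\Gamma)+K]=u_{R}(\Delta+\Gamma)+2K_{R}=u_{R}(\mathrm{div}\,\omega)+2K_{R},\]
which vanishes in $J(X)$ by Proposition \ref{candivAJ}.

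For $\Xi_{\Delta,Q}$, the divisor $\Delta+Q$ has degree $g$, so Riemann--Roch gives $r(-(\Delta+Q))=1+i(\Delta+Q)$, and Serre duality gives $i(\Delta+Q)=r(-(\Gamma-Q))$. The latter is $0$: otherwise $\Gamma-Q$, hence $\Gamma$, would be linearly equivalent to an integral divisor, contradicting $r(-\Gamma)=0$. Thus $r(-(\Delta+Q))=1$, so there is a unique integral divisor $\Xi_{\Delta,Q}$ linearly equivalent to $\Delta+Q$; it has degree $g$, is non-special, and depends only on the class of $\Delta$ since $\Delta'\sim\Delta$ implies $\Delta'+Q\sim\Delta+Q$. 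If $Q$ were in its support we could write $\Xi_{\Delta,Q}=Q+D$ with $D\geq0$ integral of degree $g-1$ linearly equivalent to $\Delta$, contradicting $r(-\Delta)=0$; hence $Q$ is not in the support. Running the same argument with $\Gamma$ in place of $\Delta$ (legitimate since $r(-\Gamma)=0$) yields $\Xi_{\Gamma,Q}$ with the analogous properties, and it is independent of $\omega$ because the linear equivalence class of $\Gamma=\mathrm{div}\,\omega-\Delta$ is independent of $\omega$.

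For the identification with $N_{Q}$, I would compute
\[u_{Q}(\Xi_{\Gamma,Q})+K_{Q}=u\big(\Xi_{\Gamma,Q}-gQ\big)+K_{Q}=u\big(\Gamma-(g-1)Q\big)+K_{Q}=u(\Gamma)+K,\]
and likewise $u_{Q}(\Xi_{\Delta,Q})+K_{Q}=u(\Delta)+K$; by the first part these are additive inverses. Since $\Xi_{\Delta,Q}$ is a non-special integral divisor of degree $g$ not containing $Q$, Theorem 2.3 of \cite{[Z]} produces $N_{Q}(\Xi_{\Delta,Q})$ with those same properties and with $u_{Q}\big(N_{Q}(\Xi_{\Delta,Q})\big)+K_{Q}$ the inverse of $u_{Q}(\Xi_{\Delta,Q})+K_{Q}$; by Abel's theorem it is therefore linearly equivalent to $\Xi_{\Gamma,Q}$, and since $\Xi_{\Gamma,Q}$ is non-special its class contains a single integral divisor, forcing $\Xi_{\Gamma,Q}=N_{Q}(\Xi_{\Delta,Q})$.

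The only point requiring a little care, in my view, is this last uniqueness: that the inverse-in-$J(X)$ condition together with integrality and degree $g$ singles out a unique divisor. This is exactly the non-speciality of $\Xi_{\Delta,Q}$ (equivalently $r(-(\Delta+Q))=1$) doing the work. Everything else is a routine, if notation-heavy, application of Riemann--Roch, Serre duality, and the base-point independence of $u(\cdot)+K$ recalled in the excerpt.
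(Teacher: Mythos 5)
Your proof is correct and follows essentially the same route as the paper's: Riemann--Roch bookkeeping (you make the Serre-duality identification $i(\Delta)=r(-\Gamma)$ explicit where the paper argues directly with $i(\Delta)$ and integral representatives), Proposition \ref{candivAJ} for the additive-inverse relation, and the uniqueness of the integral divisor in a non-special class to identify $\Xi_{\Gamma,Q}$ with $N_{Q}(\Xi_{\Delta,Q})$. No gaps.
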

The divisor $\Xi_{e,P}$ from Proposition \ref{thetaonX} is precisely $\Xi_{\Delta,P}$ from Lemma \ref{negation}, when $e$ and $\Delta$ are related via the equality $e=u(\Delta)+K$. Its $N_{P}$-image is therefore $\Xi_{-e,P}$.

\smallskip

We need the operator from Lemma \ref{negation} for our case, where $X$ is an abelian cover of the sphere via the map $z$, with Galois group $A$. When $\Delta$ is a normalized $A$-invariant divisor, Lemma \ref{negation} simplifies, via Equation \eqref{simpledivs}, as follows.
\begin{cor}
If $\Delta$ is a normalized $A$-invariant divisor satisfying the conditions of Theorem \ref{nonspdiv} then $N\Delta=\mathrm{div}(dz)-\Delta$ also has these properties, with $\beta_{\sigma,j}^{N\Delta}$ being $o(\sigma)-1-\beta_{\sigma,j}^{\Delta}$ for every $Id_{X}\neq\sigma \in A$ and $1 \leq j \leq r_{\sigma}$. The images of these divisors under $u+K$ are additive inverses in $J(X)$. \label{NonAinv}
\end{cor}
Equation \eqref{simpledivs} also explains (via Theorem \ref{nonspdiv}) why the coefficient of $z^{-1}(\infty)$ in $N\Delta$ is $-1$ as well. Corollary \ref{NonAinv} lifts the natural involution $\xi \mapsto K_{X}-\xi$ on $\mathrm{Pic}^{g-1}(X)$ to a map on divisors of degree $g-1$ (using $dz$ as the ``canonical'' choice of differential on $X$), and shows that the lifted involution preserves the set of (normalized) divisors from Theorem \ref{nonspdiv}. Combining Equation \eqref{pDeltachi} with Lemma \ref{chicomp} and Corollary \ref{NonAinv} shows that the product $p_{\Delta,\chi}p_{N\Delta,\overline{\chi}}$ equals the polynomial from that lemma (whose divisor on $X$ coincides with $\mathrm{div}(y_{\chi}y_{\overline{\chi}})$), which yields the following consequence.
\begin{cor}
The actions of $N$ and $\widehat{A}$ on the set of divisors $\Delta$ from Theorem \ref{nonspdiv}, given in Corollary \ref{NonAinv} and Lemma \ref{actAdual} respectively, satisfy the equality $N\chi\Delta=\overline{\chi}N\Delta$ for every such $\chi$ and $\Delta$. Therefore $N$ and $\widehat{A}$ generate a generalized dihedral group of operators acting on this set of divisors. \label{dihedral}
\end{cor}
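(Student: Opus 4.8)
The plan is to verify the identity $N\chi\Delta=\overline{\chi}N\Delta$ directly at the level of the normalized presentations from Equation \eqref{normdiv}, reading off the parameters $\beta_{\sigma,j}$ on both sides using the explicit formulas recorded after Corollary \ref{NonAinv} and after Lemma \ref{actAdual}, and then deduce the group-theoretic conclusion from the free actions already established.

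First I would fix $\Delta$ satisfying the conditions of Theorem \ref{nonspdiv}, with parameters $\beta_{\sigma,j}$, and compute the parameter attached to $(\sigma,j)$ in each of the two divisors $N\chi\Delta$ and $\overline{\chi}N\Delta$. For $N\Delta$ the parameter for $(\sigma,j)$ is $o(\sigma)-1-\beta_{\sigma,j}$ (as noted after Corollary \ref{NonAinv}); applying $\overline{\chi}$ via Lemma \ref{actAdual}, and recalling that $u_{\overline{\chi},\sigma}=o(\sigma)-u_{\chi,\sigma}$ when $\chi(\sigma)\neq1$ and $=0$ otherwise (Lemma \ref{chicomp}), the parameter of $(\sigma,j)$ in $\overline{\chi}N\Delta$ becomes $o(\sigma)-1-\beta_{\sigma,j}+u_{\overline{\chi},\sigma}$ reduced into the range $[0,o(\sigma))$, i.e.\ one subtracts $o(\sigma)$ precisely when $o(\sigma)-1-\beta_{\sigma,j}\geq o(\sigma)-u_{\overline{\chi},\sigma}$, equivalently when $\beta_{\sigma,j}\leq u_{\overline{\chi},\sigma}-1=u_{\overline{\chi},\sigma}-1$. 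On the other side, $\chi\Delta$ has parameter $\beta_{\sigma,j}+u_{\chi,\sigma}$ (or that minus $o(\sigma)$ when $\beta_{\sigma,j}\geq o(\sigma)-u_{\chi,\sigma}$), and then $N$ sends it to $o(\sigma)-1$ minus that value. A short case analysis, splitting on whether $\beta_{\sigma,j}\geq o(\sigma)-u_{\chi,\sigma}$ or not (and treating $\chi(\sigma)=1$ separately, where both operations do nothing to the $\sigma$-part), shows the two resulting parameters agree in every case. For the pole order at $z^{-1}(\infty)$ both sides keep $p=1$, since $N$ turns the coefficient $-1$ into $-1$ (as $dz$ has order $-2$ there) and the $\widehat{A}$-action leaves it at $-1$ by the remark following Lemma \ref{actAdual}. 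Since a normalized divisor is determined by these parameters, the equality $N\chi\Delta=\overline{\chi}N\Delta$ of divisors follows.

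Alternatively, and perhaps more cleanly, I would argue through the polynomials: by Lemma \ref{actAdual}, $\chi\Delta=\Delta+\mathrm{div}(y_\chi)-\mathrm{div}(p_{\Delta,\chi})$, so $N\chi\Delta=\mathrm{div}(dz)-\chi\Delta=\mathrm{div}(dz)-\Delta-\mathrm{div}(y_\chi)+\mathrm{div}(p_{\Delta,\chi})=N\Delta-\mathrm{div}(y_\chi)+\mathrm{div}(p_{\Delta,\chi})$. On the other hand $\overline{\chi}N\Delta=N\Delta+\mathrm{div}(y_{\overline{\chi}})-\mathrm{div}(p_{N\Delta,\overline{\chi}})$, so the desired equality is equivalent to $\mathrm{div}(p_{\Delta,\chi}p_{N\Delta,\overline{\chi}})=\mathrm{div}(y_\chi y_{\overline{\chi}})$. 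But this is exactly the identity recorded in the discussion after Corollary \ref{NonAinv}: $p_{\Delta,\chi}p_{N\Delta,\overline{\chi}}$ is the monic polynomial whose zeros are the $\lambda_{\sigma,j}$ with $\sigma\notin\ker\chi$, which by Lemma \ref{chicomp} has the same divisor as $y_\chi y_{\overline{\chi}}$. (One should note that since we are only after an equality of divisors among normalized representatives, working modulo $\mathrm{div}$ of a rational function of $z$ is harmless, and in fact with the normalizations of $y_\chi$ chosen in Section \ref{Divs} the identity holds on the nose.) This is the step I expect to be the crux, but it has essentially been done already in the text preceding the statement, so the proof is short.

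Finally, for the group-theoretic conclusion: Lemma \ref{actAdual} gives a free action of $\widehat{A}$, Corollary \ref{NonAinv} gives an involution $N$ (it is an involution because $N N\Delta=\mathrm{div}(dz)-(\mathrm{div}(dz)-\Delta)=\Delta$), and the relation just proved reads $N\chi N=\overline{\chi}=\chi^{-1}$ as operators on this set of divisors. These are precisely the defining relations of the generalized dihedral group $\mathrm{Dih}(\widehat{A})=\widehat{A}\rtimes\mathbb{Z}/2\mathbb{Z}$ with the inversion action, so $N$ and $\widehat{A}$ generate a quotient of that group acting on the set; combined with the freeness of the $\widehat{A}$-action and the fact that $N$ is not in $\widehat{A}$ (it reverses the linear-equivalence-class structure, as $u(N\Delta)+K=-(u(\Delta)+K)$ by Lemma \ref{negation} whereas $\widehat{A}$ preserves classes), the group generated is the full generalized dihedral group, completing the proof.
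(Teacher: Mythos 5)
Your proposal is correct, and your second (``polynomial'') argument is exactly the paper's own justification: the paper derives the identity $N\chi\Delta=\overline{\chi}N\Delta$ directly from the formulae for the two actions together with the coincidence of the divisors of $y_{\chi}y_{\overline{\chi}}$ and $p_{\Delta,\chi}p_{N\Delta,\overline{\chi}}$ noted after Corollary \ref{NonAinv}. Your first, parameter-by-parameter case check is just an expanded verification of the same fact, so no genuinely different route is involved.
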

Corollary \ref{dihedral} generalizes Lemma 5.2 of \cite{[Z]} from the cyclic case (including our notation $N$). On characteristics (i.e., images in $J(X)$ under $u+K$) this action reduces to the involution on $\mathrm{Pic}^{g-1}(X)$ mentioned above, with $N$ representing the non-trivial action. In the language of positive divisors of degree $g$, the fact that at the end of Section 3 of \cite{[Z]} we required an operator $N_{\beta}$ with the index $\beta$ is a reminiscent of the fact that the divisors come in classes that are $\widehat{A}$-orbits.

\section{An Expression for the Szeg\H{o} Kernel \label{Szego}}

Given a compact Riemann surface $X$ (with $\tau$ and $u$ as above) and a point $e\in\mathbb{C}^{g}$ with $\theta[e](0,\tau)\neq0$, the \emph{Szeg\H{o} kernel associated with $e$} is defined to be \[S[e](P,Q)=\frac{\theta[e]\big(u(P-Q),\tau\big)}{\theta[e](0,\tau)E(P,Q)},\quad\mathrm{with\ }P\mathrm{\ and\ }Q\mathrm{\ in\ }X.\] Here $E(\cdot,\cdot)$ is an anti-symmetric holomorphic $\big(-\frac{1}{2},-\frac{1}{2}\big)$-form on $X \times X$, called the \emph{prime form}, that vanishes only along the diagonal $P=Q$, and whose expansion near the diagonal in some coordinate chart $z$ is \[E(P,Q)=\frac{z(P)-z(Q)}{\sqrt{dz(P)}\sqrt{dz(Q)}}\Big[1+O\Big(\big(z(P)-z(Q)\big)^{2}\Big)\Big].\] In this section we shall give, when $X$ is an abelian cover of $\mathbb{CP}^{1}$ and $e$ arises from one of the divisors from Theorem \ref{nonspdiv}, an explicit expression for $S[e]$, and deduce some consequences from its expansion near the diagonal. Most of the proofs are based on Taylor expansions of explicit expressions.

Some useful properties of $S[e]$ (still in the general case) are given in the following proposition, a proof of which can be found in \cite{[Fa]} (see pages 19 and 123 there), \cite{[EG1]} (see Sections 3 and 4 of that reference, in particular the proof of Theorem 4.7), or \cite{[Na]}. It can also be established using direct calculations and the considerations from the previous section.
\begin{prop}
$S[e]$ is a $\big(\frac{1}{2},\frac{1}{2}\big)$-form that depends only on the image of $e$ in $J(X)$. The divisor of $Q \mapsto S[e](P,Q)$ for fixed $P$ is $\Xi_{e,P}-P$, where $\Xi_{e,P}$ is defined in Proposition \ref{thetaonX}, while for fixed $Q$ the divisor of $P \mapsto S[e](P,Q)$ is $\Xi_{-e,Q}-Q$. The former (resp. latter) function is the section of the unitary line bundle of degree $g-1$ corresponding to $e$ (resp. $-e$). In particular, $S[e]$ is holomorphic on $X \times X$ except for a simple pole along the diagonal, the expansion around which in a coordinate chart $z$ as above is of the form
\[\frac{\sqrt{dz(P)}\sqrt{dz(Q)}}{z(P)-z(Q)}\bigg[1+\sum_{s=1}^{g}\frac{\partial\ln\theta[e]}{\partial\zeta_{s}}\bigg|_{\zeta=0}\frac{v_{s}(Q)}{dz(Q)}\big(z(P)-z(Q)\big)+O\Big(\big(z(P)-z(Q)\big)^{2}\Big)\!\bigg]\!.\]
Finally, these properties (even with just $1+O\big(z(P)-z(Q)\big)$ in the brackets above) determine the $\left(\frac{1}{2},\frac{1}{2}\right)$-form $S[e]$ uniquely. \label{Szegoprop}
\end{prop}
By the \emph{unitary line bundle} appearing in Proposition \ref{Szegoprop} we mean the realization that transforms according to the unique associated unitary character of the homology of $X$, i.e., in which adding a cycle $a_{i}$ or $b_{i}$ to $P$ (resp. $Q$) multiplies the value of $S[e](P,Q)$ by $\mathbf{e}\big(\frac{\varepsilon_{i}}{2})$ or $\mathbf{e}\big(-\frac{\delta_{i}}{2})$ (resp. their inverses) when $e$ is written as $\tau\frac{\varepsilon}{2}+I\frac{\delta}{2}$. The expansion from Proposition \ref{Szegoprop} is known to be invariant under a change of the coordinate.

The only part of Proposition \ref{Szegoprop} that does not follow directly from the basic properties of the theta functions or of the prime form is the last assertion. Its proof appears implicitly in Theorem 1.1 of \cite{[Na]}, Theorem 7.2 of \cite{[Ko2]}, and Theorem 6.2 of \cite{[Ko3]}, but it holds equally well for every Riemann surface $X$ (regardless of a Galois cover structure). The idea is that the line bundle of which $S[e]$ is a section is the tensor product of pull-backs of fixed line bundles on the two copies of $X$, both of which have no non-zero global holomorphic sections. Hence the difference between two sections of this line bundle on $X \times X$ that have the same singularities is a holomorphic section of a bundle with no non-zero holomorphic sections, which yields the desired uniqueness of $S[e]$.

\smallskip

Assume now that $z:X\to\mathbb{CP}^{1}$ is an abelian cover with Galois group $A$ as above, that $\Delta$ is one of the divisors from Theorem \ref{nonspdiv} (written as in Equation \eqref{normdiv}), and that the characteristic $e$ is $u(\Delta)+K$. Following \cite{[Na]} we wish to produce an algebraic formula for $S[e](P,Q)$, for which we consider the expression
\begin{equation}
f_{\Delta}(P)=\prod_{\sigma \neq Id_{X}}\prod_{j=1}^{r_{\sigma}}\big(z(P)-\lambda_{\sigma,j}\big)^{\frac{\beta_{\sigma,j}^{\Delta}}{o(\sigma)}-\frac{o(\sigma)-1}{2o(\sigma)}}\cdot\sqrt{dz(P)},\qquad\mathrm{with\ }P \in X. \label{halfdif}
\end{equation}
We now prove a generalization of Proposition 4 of \cite{[Na]}, which is implicitly used also in \cite{[Ko2]} and \cite{[Ko3]}.
\begin{prop}
The expression $f_{\Delta}$ from Equation \eqref{halfdif} is a well-defined meromorphic section, with divisor $\Delta$, of the unitary line bundle of degree $g-1$ on $X$ that is associated with $e=u(\Delta)+K$. \label{mersect}
\end{prop}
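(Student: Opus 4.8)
The plan is to verify three things about the expression $f_\Delta$ in Equation \eqref{halfdif}: that it is a well-defined (single-valued) meromorphic section of \emph{some} line bundle, that this bundle has degree $g-1$ and its associated point in the Jacobian is $u(\Delta)+K$, and that the transformation rule under the homology is the prescribed unitary character. First I would address well-definedness. The factor $\sqrt{dz(P)}$ is a section of a square root of the canonical bundle (a theta characteristic bundle), so it is already a legitimate $\tfrac12$-differential once we fix a spin structure; the genuinely delicate part is the product $\prod_\sigma\prod_j (z(P)-\lambda_{\sigma,j})^{\beta_{\sigma,j}/o(\sigma) - (o(\sigma)-1)/2o(\sigma)}$ with its fractional exponents. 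Near a pre-image of $\lambda_{\sigma,j}$, the local parameter $w$ satisfies $z - \lambda_{\sigma,j} = w^{o(\sigma)}$ (up to a unit, since $o(\sigma)$ is the ramification index there), so $(z-\lambda_{\sigma,j})^{\beta_{\sigma,j}/o(\sigma)}$ becomes $w^{\beta_{\sigma,j}}$ times a single-valued unit — an honest local section — and likewise $(z-\lambda_{\sigma,j})^{-(o(\sigma)-1)/2o(\sigma)}$ pairs with the $w^{(o(\sigma)-1)/2}$ coming from $\sqrt{dz} = \sqrt{o(\sigma)}\,w^{(o(\sigma)-1)/2}\sqrt{dw}$ to give $w^{-(o(\sigma)-1)/2}\cdot w^{(o(\sigma)-1)/2}$, which is single-valued. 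Away from the ramification points and $\infty$ all exponents are applied to nonvanishing functions, so after checking the behavior at $z^{-1}(\infty)$ as well (where $z-\lambda$ is a unit and $dz$ has a double pole, contributing the $-p$ coefficient), one concludes $f_\Delta$ is a global meromorphic section of a line bundle of the form $(\text{theta characteristic})\otimes(\text{line bundle of a fractional divisor})$, and its divisor is exactly $\Delta$: the order at a pre-image of $\lambda_{\sigma,j}$ is $\beta_{\sigma,j}$ (the $-(o(\sigma)-1)/2$ and $+(o(\sigma)-1)/2$ cancel) and the order at a pole of $z$ is $-p$, which matches Equation \eqref{normdiv}.

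Next I would pin down which line bundle this is. Since $f_\Delta$ is a nowhere-identically-zero meromorphic section with $\mathrm{div}(f_\Delta)=\Delta$ of degree $g-1$ (using Theorem \ref{nonspdiv}, which forces $p=1$ and $\deg\Delta = g-1$), the underlying line bundle $L$ has degree $g-1$, and the point it determines in $\mathrm{Pic}^{g-1}(X) \cong J(X)$ (via the usual shift) is $u(\Delta) + K$ by the standard dictionary between line bundles of degree $g-1$ and points of the Jacobian through the Abel–Jacobi map and the vector of Riemann constants. The only subtlety is that $\sqrt{dz}$ already carries a chosen spin structure, so a priori $L$ might differ from the bundle associated to $u(\Delta)+K$ by a 2-torsion point; but the whole construction produces a genuine section with the stated divisor, which forces the class to be $u(\Delta)+K$ exactly — there is no ambiguity once the section exists.

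For the transformation law under the homology, I would compute directly how $f_\Delta(P)$ changes when $P$ is dragged around a cycle $a_i$ or $b_i$. Each factor $(z(P)-\lambda_{\sigma,j})^{\beta_{\sigma,j}/o(\sigma)}$ is $y_\chi$-like: analytically continuing it around a loop multiplies it by a root of unity determined by the winding of $z(P)$ around $\lambda_{\sigma,j}$, i.e. by how many times the loop encircles the branch points over $\lambda_{\sigma,j}$. Summing these contributions, one gets a character of $H_1(X,\mathbb Z)$ valued in roots of unity, and the claim is that it coincides with the unique unitary character attached to $u(\Delta)+K$ — equivalently, with the character $\mathbf{e}(\varepsilon_i/2)$, $\mathbf{e}(-\delta_i/2)$ in the notation of Proposition \ref{Szegoprop}, where $u(\Delta)+K = \Pi\tfrac\varepsilon2 + I\tfrac\delta2$. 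The cleanest argument is the uniqueness one: a meromorphic section of a degree-$(g-1)$ line bundle with divisor $\Delta$ transforms under the homology according to \emph{some} character, and the holomorphic sections transforming by the unitary character form the relevant one-dimensional space whenever $\theta$ does not vanish at the point — but here it is even simpler, because the factor $\sqrt{dz}$ transforms by the spin-structure character and the algebraic product $\prod(z-\lambda_{\sigma,j})^{\beta_{\sigma,j}/o(\sigma)}$ transforms by an explicit character computable from the monodromy of $z$, and one checks the product equals the unitary character by comparing multipliers on a symplectic basis (this is exactly the content of Proposition 4 of \cite{[Na]} in the cyclic case, and the $A$-cover case is a componentwise repetition using the decomposition $A = \prod_l H_l$).

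The main obstacle is the bookkeeping in the monodromy computation: in the general Abelian setting one must track, for each cycle and each branch point $\lambda_{\sigma,j}$, the contribution $\mathbf{e}(\beta_{\sigma,j}\cdot(\text{winding})/o(\sigma))$, combine the half-integer shifts coming from $\sqrt{dz}$ (the spin structure), and verify the result is unitary and matches $u(\Delta)+K$ — this is where the hypothesis that $\Delta$ satisfies the conditions of Theorem \ref{nonspdiv} (hence the torsion statement of Proposition \ref{torsion}) is genuinely used, since it guarantees the character is of the correct order and that no extra non-unitary factor survives. Everything else is a local computation at the ramified and infinite points as sketched above.
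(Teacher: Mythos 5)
Your local analysis at the finite branch points is essentially the paper's argument (integrality of the order $\beta_{\sigma,j}$ forces single-valuedness of the analytic continuation near each ramification point), but the proposal goes astray in two places. First, at $z^{-1}(\infty)$ you assert that ``$z-\lambda$ is a unit'' there and that the order $-p$ comes only from the pole of $\sqrt{dz}$. This is false: by the standing assumption $z$ is unramified over $\infty$, so $z-\lambda_{\sigma,j}$ has a simple pole at each of the $n$ points of $z^{-1}(\infty)$, and every factor $\big(z-\lambda_{\sigma,j}\big)^{\beta_{\sigma,j}/o(\sigma)-(o(\sigma)-1)/2o(\sigma)}$ contributes a fractional order there. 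Single-valuedness of $f_{\Delta}$ near $z^{-1}(\infty)$ is exactly the statement that these fractional contributions sum to an integer, and the paper checks this by computing $\sum_{\sigma}\sum_{j}\big(\tfrac{\beta_{\sigma,j}}{o(\sigma)}-\tfrac{o(\sigma)-1}{2o(\sigma)}\big)=\tfrac{\deg\Delta+n}{n}-\tfrac{g+n-1}{n}=0$ using $\deg\Delta=g-1$. This is precisely where the degree hypothesis enters the proof, and your sketch would not detect the failure of well-definedness for a divisor of the wrong degree.

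Second, the unitarity of the multiplier character is deferred to a monodromy ``bookkeeping'' that you do not carry out, and the uniqueness argument you gesture at does not by itself explain why the character is unitary: an arbitrary meromorphic section transforms under the homology by \emph{some} constants, but nothing in that observation forces them to have absolute value $1$. The paper's route is much shorter and avoids any symplectic-basis computation: since every exponent in Equation \eqref{halfdif} has denominator dividing $2m$, the power $f_{\Delta}^{2m}$ is a rational function of $z$ times $(dz)^{m}$, hence genuinely single-valued on $X$; therefore each monodromy multiplier of $f_{\Delta}$ is a $2m$-th root of unity, in particular unimodular, and the uniqueness of the flat unitary presentation of a line bundle then identifies the character with the one attached to $u(\Delta)+K$. (This same observation is what yields, as the paper notes afterwards, that the characteristic is torsion of order dividing $2m$.) Also note that the full non-specialty conditions of Theorem \ref{nonspdiv} are not what is used here --- only $p=1$ and $\deg\Delta=g-1$ --- so your appeal to Proposition \ref{torsion} to rule out ``non-unitary factors'' is not the right mechanism.
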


\begin{proof}
Equation \eqref{simpledivs} shows that the difference between $\mathrm{div}(f_{\Delta})$ and $\Delta$ can be supported only at the poles of $z$. By the same equation, the order of $f_{\Delta}$ at any pole of $z$ on $X$ is
\begin{equation}
-1-\sum_{\sigma \neq Id_{X}}\sum_{j=1}^{r_{\sigma}}\bigg(\frac{\beta_{\sigma,j}}{o(\sigma)}-\frac{o(\sigma)-1}{2o(\sigma)}\bigg)=-1-\bigg[\frac{\deg\Delta+n}{n}-\frac{g+n-1}{n}\bigg], \label{average0}
\end{equation}
where the equality in Equation \eqref{average0} follows from the expression for $\Delta$ in Equation \eqref{normdiv} (with $h^{\Delta}=1$) and the formula for $g$ appearing in Proposition \ref{genus}. As $\deg\Delta=g-1$ by Theorem \ref{nonspdiv}, this order is $-1$, and we deduce that $\mathrm{div}(f_{\Delta})=\Delta$. Since $f_{\Delta}$ is defined in Equation \eqref{halfdif} via fractional powers of well-defined expressions on $X$, and its divisor has integral coefficients, it is a section of the unitary realization of a line bundle, which must be the asserted one since it has a section with divisor $\Delta$. This proves the proposition.
\end{proof}
We henceforth denote the exponent of $A$ by $m$, and obtain the following consequence of Proposition \ref{mersect}.
\begin{cor}
The expression $u(\Delta)+K$, with $\Delta$ as above, is torsion of order dividing $2m$ in $J(X)$. \label{torsion}
\end{cor}

\begin{proof}
Write $u(\Delta)+K$ as $u_{P}(\Delta)+K_{P}$ for some $P \in X$, and multiply it by $2m$. By Proposition \ref{candivAJ} and the proof of Proposition \ref{mersect}, the resulting expression is $u_{P}\big(\mathrm{div}(f_{\Delta}^{2m})\big)-m \cdot u_{P}\big(\mathrm{div}(dz)\big)$. But all the exponents in $f_{\Delta}/\sqrt{dz}$ from Equation \eqref{halfdif} become integral after multiplying by $2m$, so that the latter expression vanishes as the $u_{P}$-image of $\frac{f_{\Delta}^{2m}}{(dz)^{m}}\in\mathbb{C}(z)\subseteq\mathbb{C}(X)$. This proves the corollary.
\end{proof}
We remark that using direct tools, one could have obtained that $u(\Delta)+K$ is torsion of order dividing $2\mathrm{lcm}\big\{m,\frac{n}{m}\big\}$ for \emph{any} $A$-invariant divisor of degree $g-1$, an argument that holds also for non-abelian covers of $\mathbb{CP}^{1}$. The same argument applies also for Galois covers $f:X \to S$ of more general Riemann surfaces $S$, where the torsion property is established in the Prym variety $P(X/S)$ complementing the image of $J(S)$ inside $J(X)$.

\smallskip

Recall the polynomials $p_{\Delta,\chi}$ from Equation \eqref{pDeltachi} and the expressions for $\chi\Delta$ and for $N\Delta$ in Lemma \ref{actAdual} and Corollary \ref{NonAinv} respectively. A simple evaluation of divisors combines with the proof of Proposition \ref{mersect} to yield the following result.
\begin{lem}
Given $f_{\Delta}$ and $\chi\in\widehat{A}$, the product $\frac{y_{\chi}}{p_{\Delta,\chi}}f_{\Delta}$ is a scalar multiple of $f_{\chi\Delta}$, and $\frac{dz}{f_{\Delta}}$ is a scalar multiple of $f_{N\Delta}$. \label{fDeltarels}
\end{lem}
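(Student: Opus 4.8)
The plan is to prove both claims by comparing divisors and then invoking the uniqueness part of Proposition \ref{mersect}, exactly as the structure of that proposition invites. First I would check that the indicated products are genuinely meromorphic sections of the right line bundles. For $\frac{y_\chi}{p_{\Delta,\chi}}f_\Delta$, note that $y_\chi/p_{\Delta,\chi}$ is an honest meromorphic function on $X$ (both $y_\chi$ and $p_{\Delta,\chi}$ are, and $p_{\Delta,\chi}$ is non-zero), so multiplying $f_\Delta$ by it keeps us in the same line bundle on $X$, and likewise $dz/f_\Delta$ lies in the line bundle associated with $\mathrm{div}(dz)-\Delta$, i.e.\ the one belonging to $N\Delta$ via Corollary \ref{NonAinv}. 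The transformation rule under homology cycles is automatically the correct unitary one in each case: for $\frac{y_\chi}{p_{\Delta,\chi}}f_\Delta$ because multiplication by a single-valued meromorphic function does not change the multiplier system, and for $dz/f_\Delta$ because $dz$ is single-valued while $f_\Delta$ has a unitary multiplier system (Proposition \ref{mersect}), so its reciprocal does too, and the unitary character of $N\Delta$ is forced.

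Second, I would identify the divisors. For $\frac{y_\chi}{p_{\Delta,\chi}}f_\Delta$ the divisor is $\mathrm{div}(y_\chi)-\mathrm{div}(p_{\Delta,\chi})+\Delta$, which is precisely $\chi\Delta$ by the definition in Lemma \ref{actAdual}. For $dz/f_\Delta$ the divisor is $\mathrm{div}(dz)-\Delta$, which is $N\Delta$ by Corollary \ref{NonAinv}. In both cases the resulting divisor is a normalized divisor of degree $g-1$ satisfying the conditions of Theorem \ref{nonspdiv} (this is exactly the content of Lemma \ref{actAdual} and Corollary \ref{NonAinv} respectively), so Proposition \ref{mersect} applies to it and produces a section $f_{\chi\Delta}$, resp.\ $f_{N\Delta}$, of the same line bundle with the same divisor and the same (unique) unitary multiplier system.

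Third, I would conclude by uniqueness. Two meromorphic sections of the same holomorphic line bundle on $X$ with the same divisor differ by a nowhere-vanishing holomorphic function, hence (as $X$ is compact) by a nonzero constant; and since both sections carry the same unitary multiplier system, that constant is consistent with the transformation rule, so we indeed get $\frac{y_\chi}{p_{\Delta,\chi}}f_\Delta = c_\chi f_{\chi\Delta}$ and $\frac{dz}{f_\Delta}=c\, f_{N\Delta}$ for nonzero scalars $c_\chi$, $c$. (If one wishes, the scalars can be pinned down by comparing leading coefficients of the fractional-power expressions at a single convenient point, e.g.\ by matching the normalization of $\sqrt{dz}$ and the monic leading terms of the polynomials near $z=\infty$, but the statement only asserts proportionality.)

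The only place that requires genuine care — and which I expect to be the main obstacle — is the bookkeeping that shows $\mathrm{div}\!\big(\frac{y_\chi}{p_{\Delta,\chi}}f_\Delta\big)$ really equals the normalized divisor $\chi\Delta$ rather than merely being linearly equivalent to it with possible stray contributions at the poles of $z$. Concretely, one must verify that the order of $y_\chi/p_{\Delta,\chi}$ at each pre-image of $\lambda_{\sigma,j}$, when added to $\beta_{\sigma,j}$, lands back in the range $[0,o(\sigma))$ after the appropriate reduction — this is the computation already recorded in the remark following Lemma \ref{actAdual}, namely that the new parameter is $\beta_{\sigma,j}+u_{\chi,\sigma}$ or $\beta_{\sigma,j}+u_{\chi,\sigma}-o(\sigma)$ depending on the inequality $\beta_{\sigma,j}<o(\sigma)-u_{\chi,\sigma}$, and that the compensating zeros of $p_{\Delta,\chi}$ are exactly at those $\lambda_{\sigma,j}$ where the subtraction occurs — together with the observation, noted after Lemma \ref{actAdual}, that because $\deg p_{\Delta,\chi}=t_\chi$ the order at each point of $z^{-1}(\infty)$ is unchanged. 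Granting that, everything else is immediate from the cited results.
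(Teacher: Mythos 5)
Your proof is correct and follows essentially the same route as the paper: for the first assertion the paper likewise combines Proposition \ref{mersect} with Lemma \ref{actAdual} to see that both expressions are sections of the same line bundle with the same divisor $\chi\Delta$, hence proportional. For the second assertion the paper prefers the direct cancellation of the fractional powers in Equation \eqref{halfdif} (showing $f_{\Delta}f_{N\Delta}=dz$ up to a root of unity), but it explicitly notes that your divisor-plus-uniqueness argument via Corollary \ref{NonAinv} works as well, so this is only a cosmetic difference.
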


We now present our generalization of the algebraic construction of the Szeg\H{o} kernel appearing in \cite{[Na]}, \cite{[EG2]}, \cite{[Ko2]}, and \cite{[Ko3]}. This requires a certain normalization of these expressions (a point that was overlooked in these references), as will be evident in the proof below.

\begin{thm}
Assume that $e=u(\Delta)+K$ for some of the divisors $\Delta$ from Theorem \ref{nonspdiv}, and define $f_{\Delta}$ as in Equation \eqref{halfdif}. Given explicit choices of the functions $y_{\chi}$ with $\chi\in\widehat{A}$, set $f_{\chi\Delta}^{+}$ to be the product $\frac{y_{\chi}}{p_{\Delta,\chi}}f_{\Delta}$ from Lemma \ref{fDeltarels}, and define $f_{N\chi\Delta}^{+}$ to be $\frac{dz}{f_{\chi\Delta}^{+}}$. Then the expression \[F_{e}(P,Q)=\frac{1}{n}\frac{\sum_{\chi\in\widehat{A}}f_{\chi\Delta}^{+}(P)f_{N\chi\Delta}^{+}(Q)}{z(P)-z(Q)},\quad\mathrm{for\ }P\mathrm{\ and\ }Q\mathrm{\ in\ }X,\] is invariant under replacing $\Delta$ by another divisor mapping to $e$ and under scalar multiplications of the $y_{\chi}$s, and it coincides with $S[e](P,Q)$. \label{Szegoalg}
\end{thm}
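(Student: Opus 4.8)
The plan is to identify $F_e(P,Q)$ with $S[e](P,Q)$ by verifying the characterizing properties listed in Proposition~\ref{Szegoprop}, since that proposition asserts uniqueness of the $\big(\tfrac12,\tfrac12\big)$-form with those properties. First I would address well-definedness and the scalar-independence claim: by Proposition~\ref{mersect} each $f_{\chi\Delta}^{+}$ is a meromorphic section (with divisor $\chi\Delta$) of the line bundle attached to $e$, and each $f_{N\chi\Delta}^{+}=dz/f_{\chi\Delta}^{+}$ is a section (with divisor $N\chi\Delta$) of the bundle attached to $-e$ by Lemma~\ref{fDeltarels}; hence the product $f_{\chi\Delta}^{+}(P)f_{N\chi\Delta}^{+}(Q)$ is a $\big(\tfrac12,\tfrac12\big)$-form on $X\times X$. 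The key point behind scalar-independence is that each summand is a \emph{product} of a term in $P$ and the reciprocal term (times $dz$) in $Q$, so rescaling any $y_\chi$ by $c\in\mathbb{C}^\times$ multiplies $f_{\chi\Delta}^{+}(P)$ by $c$ and divides $f_{N\chi\Delta}^{+}(Q)$ by $c$ (this is exactly the hypothesis that the product stays $dz$), leaving each summand — and hence the whole sum — fixed. Independence of the choice of representative $\Delta$ mapping to $e$ follows from Lemma~\ref{actAdual}: replacing $\Delta$ by $\psi\Delta$ merely re-indexes the sum over $\widehat{A}$ (using $N\psi\chi\Delta=\overline{\psi\chi}\,N\Delta$ from Corollary~\ref{dihedral}, together with the compatible relabeling of the $f^{+}$'s), and the product $f_{\chi\Delta}^{+}(P)f_{N\chi\Delta}^{+}(Q)$ depends only on the pair of divisors $(\chi\Delta,N\chi\Delta)$, which ranges over the same set.

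Next I would check the pole structure and holomorphy. Away from the diagonal, each $f_{\chi\Delta}^{+}(P)f_{N\chi\Delta}^{+}(Q)$ is holomorphic on $X\times X$ except that $f_{\chi\Delta}^{+}$ has the poles of $\Delta$ over $\infty$ and $f_{N\chi\Delta}^{+}$ has the (opposite, order $o(\sigma)-1$) contribution from $dz$ over $\lambda_{\sigma,j}$, etc.; one verifies that after summing over $\widehat{A}$ and dividing by $z(P)-z(Q)$ the spurious poles cancel. The mechanism is that $\sum_{\chi}f_{\chi\Delta}^{+}(P)f_{N\chi\Delta}^{+}(Q)$, viewed via the eigenfunction decomposition under $A$ acting in each variable, is supported on the "diagonal" characters and its only zero forced along $z(P)=z(Q)$ produces, after division, a section with at worst a simple pole on the diagonal $P=Q$ and no other poles — precisely the divisor $\Xi_{e,P}-P$ in $Q$ and $\Xi_{-e,Q}-Q$ in $P$ predicted by Propositions~\ref{Szegoprop} and \ref{mersect}. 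Here I would argue degree-theoretically: the section of the degree-$(g-1)$ bundle in $Q$ has the right degree, its zero divisor is the unique non-special integral divisor $\Xi_{e,P}-P+(\text{something})$, and matching with $f_{\Delta}$-type sections from Proposition~\ref{mersect} pins it down.

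The transformation law under the homology is inherited summand-by-summand: each $f_{\chi\Delta}^{+}$ transforms under $a_i,b_i$ by the unique unitary character attached to $e$ (Proposition~\ref{mersect}), each $f_{N\chi\Delta}^{+}$ by the inverse character (attached to $-e$), the factor $z(P)-z(Q)$ is $A$-invariant hence homology-invariant, so $F_e$ transforms exactly as $S[e]$ does in Proposition~\ref{Szegoprop}. Finally comes the normalization of the leading coefficient on the diagonal: I would expand $f_{\chi\Delta}^{+}(P)f_{N\chi\Delta}^{+}(Q)=f_{\chi\Delta}^{+}(P)\,dz(Q)/f_{\chi\Delta}^{+}(Q)$ around $P=Q$ in a coordinate $z$, getting $\sqrt{dz(P)}\sqrt{dz(Q)}\big[1+O(z(P)-z(Q))\big]$ for each of the $n$ terms (the leading $1$ because the $P$ and $Q$ factors are reciprocal up to $dz$), so the numerator is $n\sqrt{dz(P)}\sqrt{dz(Q)}\big[1+O(z(P)-z(Q))\big]$, and dividing by $n\big(z(P)-z(Q)\big)$ yields exactly the normalized leading term $\tfrac{\sqrt{dz(P)}\sqrt{dz(Q)}}{z(P)-z(Q)}$ required in Proposition~\ref{Szegoprop}. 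Having matched all the characterizing properties, the uniqueness clause of Proposition~\ref{Szegoprop} forces $F_e=S[e]$. The main obstacle I anticipate is the pole-cancellation step over the branch points $\lambda_{\sigma,j}$ and over $\infty$: one must check carefully, using the explicit fractional exponents of Equation~\eqref{halfdif} and the identity $p_{\Delta,\chi}p_{N\Delta,\overline{\chi}}\doteq y_{\chi}y_{\overline{\chi}}$ from the discussion after Corollary~\ref{NonAinv}, that the summands combine (via the $A$-equivariance and the factor $z(P)-z(Q)$) into something with no poles except the simple diagonal one — this is where the choice of the $y_\chi$'s (and the fact that $f_{\chi\Delta}^{+}f_{N\chi\Delta}^{+}=dz$ with no stray scalar) genuinely enters, and it is the place the cited references gloss over.
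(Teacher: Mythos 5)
Your skeleton matches the paper's: show $F_{e}$ is a section of the same line bundle on $X\times X$ as $S[e]$ (via Proposition \ref{mersect}), check holomorphy off the diagonal and the normalized simple pole along it, and invoke the uniqueness clause of Proposition \ref{Szegoprop}. The scalar- and $\Delta$-independence arguments, and the computation of the leading diagonal coefficient from the reciprocal structure $f_{N\chi\Delta}^{+}=dz/f_{\chi\Delta}^{+}$, are all exactly as in the paper. But the one step carrying the real content is left as a black box, and your diagnosis of where the difficulty sits is aimed at the wrong place. The denominator $z(P)-z(Q)$ vanishes on the whole locus $\{z(P)=z(Q)\}=\bigcup_{\sigma\in A}\{(\sigma Q,Q)\}$, of which the diagonal is only one component; the crux is therefore not pole cancellation at the branch points or at $\infty$ (there the summands have zeros, respectively simple poles that visibly cancel against the pole of $z(P)-z(Q)$, and the paper dispatches the remaining bad pairs as isolated points around which $F_{e}$ is already known to be holomorphic), but showing that the numerator vanishes along each off-diagonal component $\{(\sigma Q,Q)\}$ with $\sigma$ non-trivial, at \emph{generic} $Q$. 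You gesture at this with ``the eigenfunction decomposition under $A$'' but never carry it out, and your proposed degree-theoretic fallback for pinning down the divisor in $Q$ is both vaguer and unnecessary once this is done.

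The missing computation is short and is precisely where the normalization hypothesis enters. One writes
\[\sum_{\chi\in\widehat{A}}f_{\chi\Delta}^{+}(P)f_{N\chi\Delta}^{+}(Q)=\bigg[\sum_{\chi\in\widehat{A}}\frac{y_{\chi}(P)}{y_{\chi}(Q)}\cdot\frac{p_{\Delta,\chi}(Q)}{p_{\Delta,\chi}(P)}\bigg]f_{\Delta}(P)f_{N\Delta}(Q)\]
and expands around $P=\sigma Q$ for $Q$ generic: since $p_{\Delta,\chi}$ is a polynomial in $z$ it takes the same value at $Q$ and $\sigma Q$, while $y_{\chi}(\sigma Q)=\chi(\sigma)y_{\chi}(Q)$, so the bracket equals $\sum_{\chi}\chi(\sigma)+O\big(z(P)-z(Q)\big)$, which by orthogonality is $0$ for non-trivial $\sigma$ and $n$ for trivial $\sigma$. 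If instead the summands were normalized so that $f_{\chi\Delta}^{+}f_{N\chi\Delta}^{+}=c_{\chi}\,dz$, the bracket would become $\sum_{\chi}c_{\chi}\chi(\sigma)$, and the only way this vanishes for all non-trivial $\sigma$ while giving $n$ for the trivial one is $c_{\chi}=1$ for all $\chi$. So the hypothesis that each product stays exactly $dz$ is used at generic points of the off-diagonal components of $\{z(P)=z(Q)\}$, not at the branch points; without this identity your argument does not close.
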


\begin{proof}
The invariance from the last assertion is immediate from our normalizations, thus justifying the notation $F_{e}$. Proposition \ref{mersect} and Lemma \ref{fDeltarels} imply that $F_{e}$ is a section of the same line bundle as $S[e]$, and examining the divisors of all the terms (via Equation \eqref{simpledivs} and the proof of Proposition \ref{mersect}) yields the holomorphicity at every point $(P,Q)$ with $z(P) \neq z(Q)$ (also when one of these $z$-values is infinite). Recall the invariance of the expansion from Proposition \ref{Szegoprop} under coordinate changes, and that $z:X\to\mathbb{CP}^{1}$ is Galois. Since it is enough to establish holomorphicity and the expansion on a dense open set, it suffices to consider $P$ in the neighborhood of $\sigma Q$ for $\sigma \in A$ and $Q \in X$ with $\lambda=z(Q)$ finite and such that $\psi(\lambda)=Id_{X}$ in Proposition \ref{liftcirc} (so that $z$ itself is a good coordinate chart around $Q$). The divisor considerations make it now clear that
\begin{equation}
F_{e}(P,Q)=\frac{\sqrt{dz(P)}\sqrt{dz(Q)}}{z(P)-z(Q)}\big[a_{Q,\sigma}+O\big(z(P)-z(Q)\big)\big], \label{Feinitexp}
\end{equation}
and it remains, by the uniqueness in Proposition \ref{Szegoprop}, to prove that the constant $a_{Q,\sigma}$ from Equation \eqref{Feinitexp} is the Kronecker $\delta$-symbol $\delta_{\sigma,Id_{X}}$ for every such $Q$.

In order to do so we recall the normalization and set $f_{N\Delta}=\frac{dz}{f_{\Delta}}$, and
\[\mathrm{from\ }f_{\chi\Delta}^{+}f_{N\chi\Delta}^{+}=dz=f_{\Delta}f_{N\Delta}\mathrm{\ and\ }\tfrac{f_{\chi\Delta}^{+}}{f_{\Delta}}=\tfrac{y_{\chi}}{p_{\Delta,\chi}}\mathrm{\ we\ get\ }\tfrac{f_{N\chi\Delta}^{+}}{f_{N\Delta}}=\tfrac{p_{\Delta,\chi}}{y_{\chi}}\] (with no multiplying scalar). Substituting into the definition, we obtain that
\begin{equation}
F_{e}(P,Q)=\frac{1}{n}\bigg[\sum_{\chi\in\widehat{A}}\frac{y_{\chi}(P)}{y_{\chi}(Q)}\frac{p_{\Delta,\chi}\big(z(Q)\big)}{p_{\Delta,\chi}\big(z(P)\big)}\bigg]\frac{f_{\Delta}(P)f_{N\Delta}(Q)}{z(P)-z(Q)}, \label{Febefexp}
\end{equation}
and we expand all the functions of $P$ around $P=\sigma Q$ using $z$ as the local coordinate, recalling that $z(\sigma Q)=z(Q)$. To do this we recall from Proposition \ref{decomgen} that $A$ acts on $y_{\chi}$ via $\chi$, that $p_{\Delta,\chi}\big(z(Q)\big)\neq0$ by our assumption on $Q$, and that $f_{\Delta}$ is defined in Equation \eqref{halfdif} in terms of $z$ and $dz$ alone. It follows that
\[y_{\chi}(P)=\chi(\sigma)y_{\chi}(Q),\ \tfrac{1}{p_{\Delta,\chi}(z(P))}=\tfrac{1}{p_{\Delta,\chi}(z(Q))},\mathrm{\ and\ }f_{\Delta}(P)=\tfrac{f_{\Delta}(Q)}{\sqrt{dz(Q)}}\sqrt{dz(P)},\] all up to error terms of the sort $O\big(z(P)-z(Q)\big)$. Substituting these into Equation \eqref{Febefexp}, and recalling that $f_{\Delta}f_{N\Delta}=dz$, we indeed obtain Equation \eqref{Feinitexp}, with the coefficient $a_{Q,\sigma}$ being just $\sum_{\chi\in\widehat{A}}\frac{\chi(\sigma)}{n}$. As this sum is known to be $\delta_{\sigma,Id_{X}}$ by the classical property of characters (here we need the normalization---without it we would get an arbitrary sum $\sum_{\chi\in\widehat{A}}c_{\chi}\chi(\sigma)$, which we cannot evaluate), this completes the proof of the theorem.
\end{proof}

For an explicit expression for $S[e]=F_{e}$ in a case where $A$ is a Klein 4-group, see Equation \eqref{Szegoexp} in Section \ref{Examples} below.
\begin{rmk}
Note that $f_{\Delta}^{+}$ itself may not equal $f_{\Delta}$, since $y_{\mathbf{1}}$ can be an arbitrary non-zero scalar. Another way to overcome the normalization issue is to express $X$ as a fibered product and take $y_{\chi}$ as in Proposition \ref{fibprod} and Remark \ref{normFl}, with which the action of $\widehat{A}$ and $N$ (hence of the generalized dihedral group from Corollary \ref{dihedral}) lifts to an operation on the expressions $f_{\Delta}$ themselves. While in general there is no ambiguity in the definition of the expression $F_{e}(P,Q)$ in Theorem \ref{Szegoalg} (since we start with $\Delta$ and define all the $f_{\chi\Delta}^{+}$s and later all the $f_{N\chi\Delta}^{+}$s uniquely), when $e$ has order 2 (or 1) in $J(X)$ the action of the dihedral group from Corollary \ref{dihedral} is not free. The choices from the fibered product structure can be used to verify the well-definedness also in this case. \label{eoford2}
\end{rmk}

\smallskip

We shall also use higher order terms in the expansion of $F_{e}$ around $P=Q$. For a divisor $\Delta$ as above, two non-trivial elements $\sigma$ and $\rho$ from $A$, and two indices $1 \leq j \leq r_{\sigma}$ and $1 \leq i \leq r_{\rho}$, we define
\begin{equation}
q_{\Delta}(\sigma,j;\rho,i)=\bigg(\frac{\beta_{\sigma,j}^{\Delta}}{o(\sigma)}-\frac{o(\sigma)-1}{2o(\sigma)}\bigg)\bigg(\frac{\beta_{\rho,i}^{\Delta}}{o(\rho)}-\frac{o(\rho)-1}{2o(\rho)}\bigg). \label{qDelta}
\end{equation}
Recalling from Lemma \ref{actAdual} that the set of divisors from Theorem \ref{nonspdiv} that have the same image in $J(X)$ forms an $\widehat{A}$-orbit, we set \[q_{e}(\sigma,j;\rho,i)=\sum_{\{\Delta|u(\Delta)+K=e\}}q_{\Delta}(\sigma,j;\rho,i)=\sum_{\chi\in\widehat{A}}q_{\chi\Delta}(\sigma,j;\rho,i)\mathrm{\ if\ }u(\Delta)+K=e.\] The required expansion is evaluated in the following lemma.
\begin{lem}
If $z(Q)$ is finite and does not belong to $\bigcup_{Id_{X}\neq\sigma \in A}\{\lambda_{\sigma,j}\}_{j=1}^{r_{\sigma}}$ then the expansion of $\frac{F_{e}(P,Q)}{\sqrt{dz(P)}\sqrt{dz(Q)}}$ in $P$ around $P=Q$ is
\[\frac{1}{z(P)-z(Q)}\Bigg[1+\frac{1}{2n}\sum_{\sigma,j,\rho,i}\frac{q_{e}(\sigma,j;\rho,i)\big(z(P)-z(Q)\big)^{2}}{\big(z(Q)-\lambda_{\sigma,j}\big)\big(z(Q)-\lambda_{\rho,i}\big)}+O\Big(\big(z(P)-z(Q)\big)^{3}\Big)\Bigg].\] \label{highordFe}
\end{lem}

\begin{proof}
As the terms $f_{\chi\Delta}^{+}$ and $f_{N\chi\Delta}^{+}$ from Theorem \ref{Szegoalg} are holomorphic and non-vanishing around such a point $Q$, the normalization allows us to write \[\frac{F_{e}(P,Q)}{\sqrt{dz(P)}\sqrt{dz(Q)}}=\frac{1}{n\big(z(P)-z(Q)\big)}\sum_{\chi\in\widehat{A}}\Bigg[\frac{f_{\chi\Delta}^{+}(P)}{\sqrt{dz(P)}}\Bigg/\frac{f_{\chi\Delta}^{+}(Q)}{\sqrt{dz(Q)}}\Bigg].\] We expand $\frac{f_{\chi\Delta}^{+}(P)}{\sqrt{dz(P)}}$ around $P=Q$ up to an error of $O\big[\big(z(P)-z(Q)\big)^{3}\big]$, divide by $\frac{f_{\chi\Delta}^{+}(Q)}{\sqrt{dz(Q)}}$, sum over $\chi\in\widehat{A}$, and divide by $n$. The constant term is clearly 1, and the coefficient of the linear term simplifies to
\[\frac{1}{n}\sum_{\chi\in\widehat{A}}\frac{d}{dP}\ln\frac{f_{\chi\Delta}^{+}}{\sqrt{dz}}\bigg|_{P=Q}=\frac{1}{n}\sum_{\chi\in\widehat{A}}\frac{d}{dP}\ln\frac{f_{\chi\Delta}}{\sqrt{dz}}\bigg|_{P=Q}\] (the equality follows from the fact that $f_{\chi\Delta}$ is a scalar multiple of $f_{\chi\Delta}^{+}$, and the values are defined by our assumption on $z(Q)$). For the second derivative, observe that we are interested in the expansion of an expression of the form $\frac{\varphi(z)}{\varphi(w)}$ (with $\varphi=\frac{f_{\chi\Delta}}{\sqrt{dz}}$, $z=z(P)$, and $w=z(Q)$), and we can write $\frac{\varphi''(z)}{2\varphi(w)}$ at $z=w$ as $\frac{1}{2}\big[\frac{d^{2}\ln\varphi}{dz^{2}}+\big(\frac{d\ln\varphi}{dz})^{2}\big]_{z=w}$ (this is easily verified by evaluating the second derivative of $\ln\varphi$).

We thus consider the expression for $\frac{f_{\chi\Delta}}{\sqrt{dz}}$ given in Equation \eqref{halfdif}, using the formula for $\chi\Delta$ in Lemma \ref{actAdual}, and differentiate its logarithm. For the first derivative, we note that for fixed $\sigma$ the values of $u_{\chi,\sigma}$ are evenly distributed for $\chi\in\widehat{A}$ in the integers between 0 and $o(\sigma)-1$ (consider the values $\chi(\sigma)$ for $\chi\in\widehat{A}$), hence so are the values of $\beta_{\sigma,j}^{\chi\Delta}$ from Equation \eqref{epsDelsigjchi}. Hence $\frac{1}{n}$ times the sum over $\chi$ of this derivative equals
\begin{equation}
\frac{1}{n}\sum_{\chi\in\widehat{A}}\sum_{\sigma \neq Id_{X}}\sum_{j=1}^{r_{\sigma}}\frac{\beta_{\sigma,j}^{\chi\Delta}-\frac{o(\sigma)-1}{2}}{o(\sigma)(z(Q)-\lambda_{\sigma,j})}=\sum_{\sigma \neq Id_{X}}\sum_{j=1}^{r_{\sigma}}\frac{\frac{n}{o(\sigma)}\sum_{k=0}^{o(\sigma)-1}\big[k-\frac{o(\sigma)-1}{2}\big]}{no(\sigma)(z(Q)-\lambda_{\sigma,j})}=0, \label{vansum}
\end{equation}
as desired. The terms involving $\frac{d^{2}\ln\varphi}{dz^{2}}$ look like Equation \eqref{vansum} (with the denominator $n\big(z(Q)-\lambda_{\sigma,j}\big)$ replaced by $-2n\big(z(Q)-\lambda_{\sigma,j}\big)^{2}$), so that their sum over $\chi$ vanishes as well. On the other hand, the remaining expression $\big(\frac{d\ln\varphi}{dz})^{2}$ with $\varphi=\frac{f_{\chi\Delta}}{\sqrt{dz}}$ is the sum of products of two summands from the left hand side of Equation \eqref{vansum}, which yields $\sum_{\sigma,j,\rho,i}\frac{q_{\chi\Delta}(\sigma,j;\rho,i)}{(z(Q)-\lambda_{\sigma,j})(z(Q)-\lambda_{\rho,i})}$ by Equation \eqref{qDelta}. Summing over $\chi\in\widehat{A}$ and dividing by $2n$ thus yields the asserted term. This proves the lemma.
\end{proof}

The first consequence that we deduce from Lemma \ref{highordFe} (along the lines of \cite{[Na]}, \cite{[Ko2]}, and \cite{[Ko3]}), is the following.
\begin{cor}
For any characteristic $e$ appearing in Theorem \ref{Szegoalg}, all the derivatives $\frac{\partial\theta[e]}{\partial\zeta_{s}}\big|_{\zeta=0}$ with $1 \leq s \leq g$ vanish. \label{thetader0}
\end{cor}

\begin{proof}
Proposition \ref{Szegoprop} and Theorem \ref{Szegoalg} imply that when $Q$ satisfies the conditions of Lemma \ref{highordFe} we have the equality
\[\sum_{s=1}^{g}\frac{\partial\ln\theta[e]}{\partial\zeta_{s}}\bigg|_{\zeta=0}\frac{v_{s}(Q)}{dz(Q)}=\lim_{P \to Q}\bigg[\frac{F_{e}(P,Q)}{\sqrt{dz(P)}\sqrt{dz(Q)}}-\frac{1}{z(P)-z(Q)}\bigg].\] Note that the left hand side is well-defined, since Theorem \ref{RiemVan} and the non-specialty of $\Delta$ imply that $\theta[e](0,\tau)\neq0$. But as the expansion from Lemma \ref{highordFe} has no linear term, the right hand side vanishes. Hence so does the left hand side, which we then multiply by $\theta[e](0,\tau)$ and by $dz(Q)$ to get the vanishing of the differential $\sum_{s=1}^{g}\frac{\partial\theta[e]}{\partial\zeta_{s}}\big|_{\zeta=0}v_{s}$ at every such $Q$. This differential therefore vanishes identically, and the linear independence of the basis $\{v_{s}\}_{s=1}^{g}$ yields the vanishing of the desired coefficients. This proves the corollary.
\end{proof}

Another statement from \cite{[Na]}, \cite{[Ko2]}, and \cite{[Ko3]} takes, in our general abelian setting, the following form.
\begin{cor}
For $Q$ as in Lemma \ref{highordFe} the product $S[e](P,Q)S[-e](P,Q)$ expands, with $P$ in the neighborhood of $Q$, as
\[\frac{dz(P)dz(Q)}{\big(z(P)-z(Q)\big)^{2}}
\Bigg[1+\frac{1}{n}\sum_{\sigma,j,\rho,i}\frac{q_{e}(\sigma,j;\rho,i)\big(z(P)-z(Q)\big)^{2}}{\big(z(Q)-\lambda_{\sigma,j}\big)\big(z(Q)-\lambda_{\rho,i}\big)}+O\Big(\big(z(P)-z(Q)\big)^{3}\Big)\Bigg].\] \label{prodSzego}
\end{cor}

\begin{proof}
Theorem \ref{Szegoalg} allows us to use the expansions of $F_{e}(P,Q)$ and $F_{-e}(P,Q)$. But Corollaries \ref{NonAinv} and \ref{dihedral} imply that for every $\sigma$, $j$, $\rho$, and $i$, replacing $\Delta$ and $\chi$ by $N\Delta$ and $\overline{\chi}$ takes the summand from Equation \eqref{vansum} to its additive inverse, so that in Equation \eqref{qDelta} we get \[q_{\overline{\chi}N\Delta}(\sigma,j;\rho,i)=q_{\chi\Delta}(\sigma,j;\rho,i),\quad\mathrm{which\ implies}\quad q_{-e}(\sigma,j;\rho,i)=q_{e}(\sigma,j;\rho,i)\] by summing over $\chi$. Hence the expansions of $F_{e}(P,Q)$ and $F_{-e}(P,Q)$ in Lemma \ref{highordFe} coincide, and evaluating their product proves the corollary.
\end{proof}
In fact, one has $S[e](P,Q)=S[-e](Q,P)$ for any characteristic on any Riemann surface, which yields an alternative proof for Corollary \ref{prodSzego}.

\section{Constructing the Canonical Differential \label{CanDif}}

For a general compact Riemann surface $X$, one defines the \emph{canonical differential} $\omega$ on $X \times X$. This section considers the decomposition of $\omega$ in case $X$ is an abelian cover of $\mathbb{CP}^{1}$, and uses it to prove a formula for the Bergman projective connection appearing in the expansion of $\omega$. Also here we expand explicit expressions near the diagonal.

For any Riemann surface $X$, the differential $\omega$ is characterized as follows.
\begin{prop}
There exists a unique meromorphic $(1,1)$-form $\omega$ on $X \times X$ with the following properties: It is symmetric; It is holomorphic when $P \neq Q$ and has a singularity of the form \[\omega(P,Q)=\Big[\tfrac{1}{(z(P)-z(Q))^{2}}+\tfrac{G_{B}(z(Q))}{6dz(Q)^{2}}+O\big(z(P)-z(Q)\big)\Big]dz(P)dz(Q)\] along the diagonal $P=Q$ in every coordinate $z$ around a diagonal point; And its integral (in $P$ say, for fixed $Q$) along any cycle $a_{i}$ with $1 \leq i \leq g$ vanishes. \label{candifprop}
\end{prop}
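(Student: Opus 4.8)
The plan is to prove \textbf{uniqueness} first, since it is short, and then exhibit a construction realizing the \textbf{existence}. For uniqueness, suppose $\omega_{1}$ and $\omega_{2}$ both satisfy the listed properties and set $\eta=\omega_{1}-\omega_{2}$. The two forms have the same principal part $\frac{dz(P)dz(Q)}{(z(P)-z(Q))^{2}}$ along the diagonal, so $\eta$ extends holomorphically across $P=Q$ and is therefore a global holomorphic $(1,1)$-form on $X\times X$; in particular, for each fixed $Q$ the assignment $P\mapsto\eta(P,Q)$ is a holomorphic differential of the first kind on $X$. By hypothesis all of its $a_{i}$-periods vanish, and a holomorphic differential on $X$ with vanishing $a$-periods is identically zero. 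Hence $\eta\equiv0$ and $\omega_{1}=\omega_{2}$; note that this argument does not even invoke symmetry.

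For existence I would take $\omega(P,Q)=d_{P}d_{Q}\log E(P,Q)$, where $E$ is the prime form recalled before Proposition \ref{Szegoprop} (this is the fundamental normalized bidifferential of \cite{[Fa]}). Four points then need checking. \emph{(i) Single-valuedness.} $E(P,Q)$ is invariant when $P$ traverses an $a$-cycle and, when $P$ traverses a $b_{j}$-cycle, gets multiplied by the exponential of $-\pi i\tau_{jj}-2\pi i\int_{Q}^{P}v_{j}$; applying $d_{P}$ first turns the multiplicative ambiguities into additive terms that are either constant or independent of $Q$, all of which are annihilated by the subsequent $d_{Q}$, so $\omega$ is a genuine single-valued meromorphic form on $X\times X$. \emph{(ii) Symmetry.} Since $E(Q,P)=-E(P,Q)$, the two logarithms differ by the constant $i\pi$, which is killed by $d_{P}d_{Q}$. \emph{(iii) Pole structure.} Substituting the expansion $E(P,Q)=\frac{z(P)-z(Q)}{\sqrt{dz(P)}\sqrt{dz(Q)}}\big[1+O((z(P)-z(Q))^{2})\big]$ gives $\log E=\log(z(P)-z(Q))-\tfrac12\log dz(P)-\tfrac12\log dz(Q)+O((z(P)-z(Q))^{2})$; the two $\log dz$ terms drop out after $d_{P}d_{Q}$, the first term contributes $\frac{dz(P)dz(Q)}{(z(P)-z(Q))^{2}}$, and the remaining series, having no linear term, contributes a term regular along the diagonal. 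Defining $G_{B}(z(Q))$ to be six times the value of that regular coefficient at $P=Q$ produces exactly the asserted expansion, and its transformation as a projective connection under coordinate changes is the standard Bergman-kernel computation. \emph{(iv) Vanishing $a$-periods.} With the normalization of $E$ attached to the period lattice generated by $I$ and $\tau$ (the latter built from the $a$-dual basis $v_{s}$), $E$ is single-valued around each $a_{i}$, so $d_{P}\log E$ has vanishing $a_{i}$-periods, hence so does $\omega=d_{Q}(d_{P}\log E)$.

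The part requiring the most care is exactly steps (i)--(iv): keeping the multivaluedness and normalization bookkeeping of the prime form straight. All four facts are classical (see \cite{[Fa]}), but since everything downstream in the paper — the Bergman projective connection $G_{B}$, the vanishing $a$-periods used in the Rauch formula — depends on the precise normalization, it is worth spelling them out rather than merely citing. An alternative existence argument, avoiding $E$, constructs for each fixed $Q$ the unique differential of the second kind on $X$ with a prescribed double pole at $Q$ (principal part $\frac{dz(P)}{(z(P)-z(Q))^{2}}$ in a coordinate) and vanishing $a$-periods, existence and uniqueness coming from Riemann--Roch for differentials of the second kind; there the subtlety is to check that the prescription of the double pole can be made coordinate-free so that these differentials glue into a $(1,1)$-form on $X\times X$, after which symmetry follows from the bilinear relations. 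Either route yields the proposition, and uniqueness in the strong form above guarantees the two constructions agree.
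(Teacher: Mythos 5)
The paper itself gives no proof of Proposition \ref{candifprop}: the canonical differential is introduced as classical background, with the construction implicitly delegated to \cite{[Fa]}, where $\omega=d_{P}d_{Q}\log E(P,Q)$ is precisely the definition of the normalized bidifferential. Your argument is therefore not an alternative to anything in the paper but a correct filling-in of the standard proof, and both halves (uniqueness via a holomorphic difference with vanishing $a$-periods, existence via the prime form) are sound. Two small points are worth tightening. In step (iv), the intermediate claim that $d_{P}\log E$ has vanishing $a_{i}$-periods is slightly too strong as stated: since $E(\cdot,Q)$ is single-valued and non-vanishing along $a_{i}$ (for $Q$ off the cycle), the period $\oint_{a_{i}}d_{P}\log E(P,Q)$ is a priori only an integer multiple of $2\pi i$; but that integer is locally constant in $Q$, so it is annihilated by the subsequent $d_{Q}$, which is the argument you in effect need and which rescues the conclusion. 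In step (iii), it is worth saying explicitly that the displayed expansion \emph{defines} $G_{B}$ here (the proposition does not presuppose an independent definition of the Bergman projective connection), and that the Schwarzian transformation law you allude to is exactly the computation the authors carry out by hand later, in the proof of the second assertion of Lemma \ref{expsint} and the remark following it; your uniqueness argument, which as you note does not use symmetry, also shows that the alternative construction via differentials of the second kind with prescribed double pole and vanishing $a$-periods must coincide with the prime-form one.
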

Here $G_{B}$ is the \emph{Bergman projective connection}, and it is clear that replacing $P$ by $Q$ in the term involving it gives the same value up to $O\big(z(P)-z(Q)\big)$. The symmetry in Proposition \ref{candifprop}, namely the condition $\omega(P,Q)=\omega(Q,P)$ for every $P$ and $Q$ in $X$, implies that the $a_{i}$-integrals of $\omega(P,Q)$ in $Q$ (for fixed $P$) vanish as well. As with $E(P,Q)$ and $S[e](P,Q)$, the form of the singularity of $\omega(P,Q)$ is invariant under coordinate changes (see also Lemma \ref{Schwarz} below).

Back in the case where $z:X\to\mathbb{CP}^{1}$ is abelian with Galois group $A$, it turns out useful to decompose $\omega$ according to the action of $A$ on the two variables. The simplest analogue of this argument is the following immediate consequence of Proposition \ref{holdif}, of which we shall make use later.
\begin{cor}
The holomorphic $(1,1)$-forms on $X \times X$ are of the form \[\textstyle{\sum_{\mathbf{1}\neq\chi\in\widehat{A}}\sum_{\mathbf{1}\neq\eta\in\widehat{A}}q_{\chi}^{\eta}\big(z(P),z(Q)\big)\psi_{\overline{\chi}}(P)\psi_{\eta}(Q)},\] where $q_{\chi}^{\eta}$ is a polynomial whose degree in the first (resp. second) variable does not exceed $t_{\chi}-2$ (resp. $t_{\overline{\eta}}-2$). \label{hol11forms}
\end{cor}
In particular, the degree bound in Corollary \ref{hol11forms} implies that only characters $\chi$ and $\eta$ with $t_{\chi}\geq2$ and $t_{\overline{\eta}}\geq2$ can appear (as was the case in Proposition \ref{holdif}). The normalization with the subscript $\chi$ corresponding to $\psi_{\overline{\chi}}(P)$ will turn out more convenient below. In order to obtain a similar decomposition of the non-holomorphic differential $\omega$, we shall first need to verify the existence of certain polynomials. While this was relatively simple in the particular cases considered in \cite{[Na]}, \cite{[Ko2]}, \cite{[Ko3]} and others, in general we shall need the following lemma.
\begin{lem}
Let two degrees $d\geq1$ and $e\geq1$ be given, and take two polynomials $f_{0}$ and $f_{1}$ in one variable, of degrees $e+d$ and $e+d-1$ respectively. A necessary and sufficient condition for the existence of polynomials $f_{l}$, $2 \leq l \leq d$, each of degree $d+e-l$, such that the degree of $\sum_{l=0}^{d}f_{l}(w)(z-w)^{l}$ in $w$ will not exceed $e$, is that the leading coefficient of $f_{1}$ is $d$ times the leading coefficient of $f_{0}$. \label{polexist}
\end{lem}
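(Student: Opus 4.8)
The plan is to reformulate the problem as a linear-algebra statement about the polynomial $\Phi(z,w)=\sum_{l=0}^{d}f_{l}(w)(z-w)^{l}$, viewed as a polynomial in $w$ whose coefficients depend on the unknowns (the coefficients of $f_{2},\dots,f_{d}$). The condition that $\deg_{w}\Phi\le e$ imposes, a priori, one linear equation for each power $w^{e+1},w^{e+2},\dots$ up to the maximal possible degree in $w$, which is $\max_{l}(d+e-l+l)=d+e$ (attained by the $f_{0}$ term, and also potentially by higher-$l$ terms through the $(z-w)^{l}$ factor). So there are at most $d$ equations to satisfy, corresponding to the coefficients of $w^{e+1},\dots,w^{e+d}$. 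First I would perform the change of variable $v=z-w$, rewriting everything as a polynomial in $v$ with coefficients that are polynomials in $w$; then $\deg_{w}\Phi\le e$ is equivalent to a triangular-looking system, which suggests solving for $f_{d},f_{d-1},\dots,f_{2}$ successively by descending induction on $l$.

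The key computational step is to expand $\Phi$ and extract the top coefficients in $w$. Writing $f_{l}(w)=\sum_{k}a_{l,k}w^{k}$ with $\deg f_{l}=d+e-l$, the coefficient of $w^{e+r}$ in $\Phi$ (for $r\ge 1$) receives contributions from $f_{l}(w)(z-w)^{l}$ for various $l$; the crucial observation is that the coefficient of the highest power $w^{e+d}$ comes only from $f_{0}$ (contributing its leading coefficient, call it $c_{0}$) and from the $w^{d}$-term inside $(z-w)^{d}$ multiplied by the leading term of $f_{d}$ (contributing $(-1)^{d}\,\text{(leading coeff of }f_{d})$), whereas the coefficient of $w^{e+d-1}$ mixes the leading coefficients of $f_{0},f_{1},f_{d},f_{d-1}$. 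One then sees that the highest equation determines the leading coefficient of $f_{d}$, the next determines the leading coefficient of $f_{d-1}$ (given $f_{d}$), and so on down to $f_{1}$ — except that $f_{1}$ is \emph{prescribed}, so the equation at level $w^{e+d-1}$ (the one that would otherwise ``determine'' the leading coefficient of $f_{1}$) becomes a consistency constraint. Carefully tracking binomial coefficients, this constraint reads exactly: leading coefficient of $f_{1}$ equals $d$ times leading coefficient of $f_{0}$. All lower equations ($w^{e+1},\dots,w^{e+d-2}$) can then be solved freely for the remaining (lower-order) coefficients of $f_{2},\dots,f_{d}$, since at each level a fresh unknown appears with nonzero coefficient; here one checks that the degree bound $\deg f_{l}=d+e-l$ leaves precisely enough free coefficients and no more, so the system is (after the single consistency condition) exactly determined or under-determined but always solvable.

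I would organize the write-up as: (i) necessity — assume the $f_{l}$ exist, compare the coefficients of $w^{e+d}$ and $w^{e+d-1}$ on both sides of $\deg_{w}\Phi\le e$, and read off the stated relation between leading coefficients; (ii) sufficiency — assume the relation holds, and construct $f_{d},f_{d-1},\dots,f_{2}$ by descending induction, at step $l$ choosing $f_{l}$ so as to kill the coefficient of $w^{e+d-l+1}$ (this is where the hypothesis is consumed, at the step that would have constrained $f_{1}$), using that the coefficient in question depends on the leading term of the yet-unchosen $f_{l}$ with an invertible (in fact $\pm1$) coefficient.

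The main obstacle I anticipate is purely bookkeeping: correctly identifying \emph{which} equation in the tower of coefficient-vanishing conditions is the one forced by the prescribed data $f_{0},f_{1}$, and verifying that the binomial factor appearing there is exactly $d$ (and not $d$ up to sign or some other combinatorial factor). This requires a clean choice of indexing — I would use the substitution $v=z-w$ and expand $v^{l}=(z-w)^{l}$ by the binomial theorem, then collect by powers of $w$ — and a small, explicit check in the base case $d=1$ (where the claim says $\deg_{w}(f_{0}(w)+f_{1}(w)(z-w))\le e$ iff leading coeff of $f_{1}$ equals leading coeff of $f_{0}$, which is immediate) and perhaps $d=2$, to pin down the constant before running the general induction.
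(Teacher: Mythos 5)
Your overall strategy---convert the degree condition into a linear system in the coefficients of $f_{2},\dots,f_{d}$, read necessity off the top of the system, and solve the rest for sufficiency---is the same as the paper's, but the structure you assign to that system is wrong, and the error is not mere bookkeeping. First, the vanishing conditions are indexed by monomials $z^{i}w^{j}$ with $j>e$, not by powers of $w$ alone: the coefficient of $w^{e+r}$ in $\Phi$ is a polynomial in $z$, each of whose coefficients must vanish separately, so there are $d(d+1)/2$ scalar equations rather than $d$. Second, and more seriously, your ``crucial observation'' is false: since $\deg f_{l}=d+e-l$ and the leading $w$-term of $(z-w)^{l}$ is $(-w)^{l}$, \emph{every} product $f_{l}(w)(z-w)^{l}$ has $w$-degree exactly $d+e$, so the coefficient of $z^{0}w^{d+e}$ equals $\sum_{l=0}^{d}(-1)^{l}\,\mathrm{lead}(f_{l})$ and involves all the leading coefficients, not just those of $f_{0}$ and $f_{d}$. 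Already for $d=2$ this coefficient is $\mathrm{lead}(f_{0})-\mathrm{lead}(f_{1})+\mathrm{lead}(f_{2})$, contradicting your claimed pattern; the base case $d=1$ is too degenerate to detect this. Consequently the descending induction in which ``each new equation determines the leading coefficient of one new $f_{l}$'' fails at its very first step.

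The genuine triangularity is in the power of $z$, not of $w$: the equation from $z^{d-1}w^{e+1}$ couples only $\mathrm{lead}(f_{d-1})$ and $\mathrm{lead}(f_{d})$ (it reads $\mathrm{lead}(f_{d-1})-d\,\mathrm{lead}(f_{d})=0$), and lowering the power of $z$ brings in more unknowns, so the $d$ equations coming from the monomials with $i+j=d+e$ form a coupled $d\times(d+1)$ system in all the leading coefficients that must be solved simultaneously. That is where the real work lies: the paper factors the coefficient matrix as a product of two triangular Pascal-type matrices, inverts it, and the factor $d$ in the statement is precisely the one entry of the inverse expressing $\mathrm{lead}(f_{1})$ in terms of $\mathrm{lead}(f_{0})$. (For necessity alone there is a shortcut you might prefer: if $\deg_{w}\Phi\le e$, write $\Phi=\sum_{i=0}^{d}c_{i}(w)z^{i}$ with $\deg c_{i}\le e$ and note that $c_{d}=f_{d}$, that $\Phi(w,w)=f_{0}(w)$, and that $\partial_{z}\Phi(z,w)|_{z=w}=f_{1}(w)$; comparing leading coefficients in the last two identities gives $\mathrm{lead}(f_{d})=\mathrm{lead}(f_{0})$ and then $\mathrm{lead}(f_{1})=d\,\mathrm{lead}(f_{d})$.) Your sufficiency step also needs more than ``a fresh unknown appears with a nonzero coefficient'': for each lower block of equations one must verify that the relevant square submatrix of the same Pascal-type matrix is invertible before the remaining coefficients can be solved for.
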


\begin{proof}
Write $f_{l}(w)=\sum_{k=0}^{d+e-l}a_{l,k}(-w)^{k}$, and apply the binomial theorem to expand the sum $\sum_{l=0}^{d}f_{l}(w)(z-w)^{l}$. This yields \[\textstyle{\sum_{l=0}^{d}\sum_{k=0}^{d+e-l}\sum_{i=0}^{l}a_{l,k}\binom{l}{i}z^{i}(-w)^{k+l-i}\!=\!\sum_{i=0}^{d}\sum_{l=i}^{d}\sum_{j=l-i}^{d+e-i}a_{l,j-l+i}\binom{l}{i}z^{i}(-w)^{j}}\!,\] where we have changed the variable to $j=k+l-i$ (so that $k=j-l+i$). The degree bound in $w$ is thus satisfied if and only if \[\textstyle{\sum_{l=i}^{\min\{d,i+j\}}\binom{l}{i}a_{l,j-l+i}=0}\quad\mathrm{for\ every}\quad0 \leq i \leq d\quad\mathrm{and}\quad j>e.\] We may separate these equations according to the sum $i+j$, which we write as $d+e-h$ for some $h$, and this index satisfies $0 \leq h<d$ because we assume that $\deg f_{l} \leq d+e-l$ and $j>e$. By fixing $h$ and recalling the inequality $j=d+e-h-i>e$, we obtain $d-h$ linear equations, namely
\[\textstyle{\sum_{l=i}^{\min\{d,d+e-h\}}\binom{l}{i}a_{l,d+e-h-l}=0}\quad\mathrm{for}\quad0 \leq i<d-h,\] in the variables $\{a_{l,d+e-h-l}\}_{l=0}^{\min\{d,d+e-h\}}$ (and there are $\min\{d+1,d+e-h+1\}$ such variables).

First take $h=0$ and consider $a_{0,d+e}$ as a parameter. This yields a non-homogenous system of $d$ linear equations in the $d$ variables $\{a_{l,d+e-l}\}_{l=1}^{d}$, where we denote the matrix of coefficients (whose $il$-entry is $\binom{l}{i-1}$) by $M$, and the non-homogenous part is $-a_{0,d+e}$ times the first standard column vector $\epsilon_{1}$. If $T$ is the matrix with entries $\binom{l-1}{i-1}$ and $J$ is the one having 1s on the main diagonal and the one below it and 0 anywhere else, then the relation $\binom{l}{i-1}=\binom{l-1}{i-1}+\binom{l-1}{i-2}$ shows that $M=JT$. Both $J$ and $T$ are triangular and invertible, so that these equations have a unique solution for the $a_{l,d+e-l}$s in terms of $a_{0,d+e}$. As the linear equations arising from $h>0$ are represented by the matrix consisting of the first $d-h$ rows of $M$, we deduce that by fixing the coefficients $a_{l,d+e-l-h}$ with $0 \leq l \leq h$, the remaining coefficients, $a_{l,d+e-l-h}$ with larger $l$, are determined uniquely by the ones that we fixed.

But our assumption is that the coefficients with $l=0$ and with $l=1$ are given, and we are free to choose the rest. Hence the polynomials $f_{l}$ with $2 \leq l \leq d$ with the required property exist if and only if the value of $a_{1,d+e-1}$ coincides with the solution for that coefficient using the equations with $h=0$. But this solution is the first coordinate in the product of $M^{-1}=T^{-1}J^{-1}$ with the column vector $-a_{0,d+e}\epsilon_{1}$. Now, $J^{-1}$ is lower triangular with $(-1)^{k}$ along the $k$th lower diagonal, and multiplying the row with coordinates $(-1)^{k}$ by $T$ from the right yields the row vector $\epsilon_{1}^{t}$. Hence this row is the upper row of $T^{-1}$, so that the upper left entry of $M^{-1}$ is $d$ and the required relation for the existence of our polynomials is $a_{1,d+e-1}=-da_{0,d+e}$. Since $a_{l,k}$ was the coefficient of $(-w)^{k}$ in $f_{l}$, this is indeed the asserted relation. This proves the lemma.
\end{proof}
Note that the case $d=1$ in Lemma \ref{polexist} simply says that when $\deg f_{0}=e+1$ and $\deg f_{1}=e$, the degree of $f_{0}(w)+f_{1}(w)(z-w)$ in $w$ is at most $e$ if and only if the leading coefficients of $f_{0}$ and $f_{1}$ coincide, and we do not choose any additional polynomials.

\smallskip

We shall now construct a meromorphic $(1,1)$-form on $X \times X$ having the same singularities as $\omega$. Considering the divisor of $y_{\chi}$ in Proposition \ref{decomgen} shows, via Equation \eqref{simpledivs}, that there is a non-zero complex number $c_{\chi}$ such that
\begin{equation}
y_{\chi}^{n}=c_{\chi}\!\prod_{\sigma \neq Id_{X}}\prod_{j=1}^{r_{\sigma}}(z-\lambda_{\sigma,j})^{nu_{\chi,\sigma}/o(\sigma)}\!,\mathrm{\ and\ hence\ }\frac{d}{dz}\ln y_{\chi}=\!\sum_{\sigma \neq Id_{X}}\sum_{j=1}^{r_{\sigma}}\frac{u_{\chi,\sigma}/o(\sigma)}{z-\lambda_{\sigma,j}}. \label{ddzlnychi}
\end{equation}
We can therefore construct the following polynomial.
\begin{cor}
For $\chi\in\widehat{A}$ set $\widetilde{f}_{\chi,0}^{\chi}=y_{\chi}y_{\overline{\chi}}$, and set $\widetilde{f}_{\chi,1}^{\chi}=\widetilde{f}_{\chi,0}^{\chi}\cdot\frac{d}{dz}\ln y_{\chi}$. Then we can choose for every $2 \leq l \leq t_{\chi}$ a polynomial $\widetilde{f}_{\chi,l}^{\chi}$ of degree at most $t_{\chi}+t_{\overline{\chi}}-l$ such that by setting $\widetilde{p}_{\chi}^{\chi}(z,w)=\sum_{l=0}^{t_{\chi}}\widetilde{f}_{\chi,l}^{\chi}(w)(z-w)^{l}$, the degree of $\widetilde{p}_{\chi}^{\chi}$ in $w$ does not exceed $t_{\overline{\chi}}$. \label{pchichi}
\end{cor}
It is also clear from the definition that the degree of $\widetilde{p}_{\chi}^{\chi}$ in $z$ is at most $t_{\chi}$.

\begin{proof}
Lemma \ref{chicomp} shows that $\widetilde{f}_{\chi,0}^{\chi}=y_{\chi}y_{\overline{\chi}}$ is a polynomial of degree $t_{\chi}+t_{\overline{\chi}}$ in $z$, whose roots coincide with those of $y_{\chi}^{n}$ hence with the simple poles of $\frac{d}{dz}\ln y_{\chi}$ as expressed in Equation \eqref{ddzlnychi}. Therefore $\widetilde{f}_{\chi,1}^{\chi}$ is also a polynomial in $z$, of degree one less. Moreover, the leading coefficient of $\widetilde{f}_{\chi,1}^{\chi}$ equals that of $\widetilde{f}_{\chi,0}^{\chi}$ multiplied by $\sum_{\sigma \neq Id_{X}}\frac{r_{\sigma}u_{\chi,\sigma}}{o(\sigma)}$, and Proposition \ref{decomgen} identifies this multiplier as $t_{\chi}$. The existence of the appropriate polynomials $\widetilde{f}_{\chi,l}^{\chi}$ with $2 \leq l \leq t_{\chi}$ thus follows from Lemma \ref{polexist}, with $d=t_{\chi}$ and $e=t_{\overline{\chi}}$. This proves the corollary.
\end{proof}
The polynomial $\widetilde{p}_{\chi}^{\chi}$ from Corollary \ref{pchichi} is not unique, since Lemma \ref{polexist} allows some degree of freedom in taking the $\widetilde{f}_{\chi,l}^{\chi}$s with $l\geq2$. When $t_{\chi}=1$ it is unique though. The case with $t_{\chi}=0$ occurs only for $\chi=\mathbf{1}$, where the polynomials $\widetilde{f}_{\mathbf{1},0}^{\mathbf{1}}$ and $\widetilde{p}_{\mathbf{1}}^{\mathbf{1}}$ from Corollary \ref{pchichi} are just the constant polynomial $y_{\mathbf{1}}^{2}$, and $\widetilde{f}_{\mathbf{1},1}^{\mathbf{1}}=0$. The double indexation of $\chi$ will become clearer in Corollary \ref{omegaxi} below.

\smallskip

The shall need several consequences of the following expansion.
\begin{lem}
Let $Q \in X$ be such that $z(Q)$ is in $\mathbb{C}\setminus\bigcup_{Id_{X}\neq\sigma \in A}\{\lambda_{\sigma,j}\}_{j=1}^{r_{\sigma}}$, and take $\chi\in\widehat{A}$ and $\sigma \in A$. Then $\frac{1}{y_{\chi}(P)}$ expands around $P=\sigma Q$ as $\frac{\overline{\chi(\sigma)}}{y_{\chi}(Q)}$ times \[1-\sum_{\sigma \neq Id_{X}}\tfrac{u_{\chi,\sigma}}{o(\sigma)}\sum_{j=1}^{r_{\sigma}}\Big[\tfrac{z(P)-z(Q)}{z(Q)-\lambda_{\sigma,j}}-\tfrac{(z(P)-z(Q))^{2}}{2(z(Q)-\lambda_{\sigma,j})^{2}}\Big]+
\sum_{\sigma,j,\rho,i}\tfrac{u_{\chi,\sigma}u_{\chi,\rho}(z(P)-z(Q))^{2}/o(\sigma)o(\rho)}{2(z(Q)-\lambda_{\sigma,j})(z(Q)-\lambda_{\rho,i})},\] up to an error term of $O\big[\big(z(P)-z(Q)\big)^{3}\big]$. \label{ychiexp}
\end{lem}

\begin{proof}
The proof of Lemma \ref{highordFe} allows us to write $\frac{1}{y_{\chi}(P)}$ up to $O\big[\big(z(P)-z(Q)\big)^{3}\big]$ as $\frac{1}{y_{\chi}(\sigma Q)}$ times an expansion involving only derivatives of $\ln\frac{1}{y_{\chi}}$ at $P=\sigma Q$. The external multiplier was seen to be $\frac{\overline{\chi(\sigma)}}{y_{\chi}(Q)}$, and the remaining expressions can be evaluated using Equation \eqref{ddzlnychi}. Since the latter equation involves only a function of $z$, the rest of the expansion attains the same value at $Q$ and at $\sigma Q$. The result thus follows from a straightforward calculation. This proves the lemma.
\end{proof}

For every $\chi\in\widehat{A}$ we define, following \cite{[Na]}, \cite{[Ko2]}, and \cite{[Ko3]}, the $(1,1)$-form
\begin{equation}
\xi_{\chi}(P,Q)=\frac{\widetilde{p}_{\chi}^{\chi}\big(z(P),z(Q)\big)}{\big(z(P)-z(Q)\big)^{2}}\psi_{\overline{\chi}}(P)\psi_{\chi}(Q)=
\frac{\widetilde{p}_{\chi}^{\chi}\big(z(P),z(Q)\big)dz(P)dz(Q)}{y_{\chi}(P)y_{\overline{\chi}}(Q)\big(z(P)-z(Q)\big)^{2}} \label{xichidef}
\end{equation}
(so that in particular $\xi_{\mathbf{1}}(P,Q)=\frac{dz(P)dz(Q)}{(z(P)-z(Q))^{2}}$), and set $\xi=\frac{1}{n}\sum_{\chi\in\widehat{A}}\xi_{\chi}$. Note that also here we have an implicit normalization, since the polynomials $\widetilde{f}_{\chi,0}^{\chi}$ and $\widetilde{f}_{\chi,1}^{\chi}$ we used for defining $\widetilde{p}_{\chi}^{\chi}$ in Corollary \ref{pchichi} are based on $y_{\chi}y_{\overline{\chi}}$ rather than the polynomial from Lemma \ref{chicomp}. This normalization makes $\xi_{\chi}$ invariant under rescalings of $y_{\chi}$ and $y_{\overline{\chi}}$, and is also used in the following result.

\begin{prop}
The difference between the $(1,1)$-form $\xi$ and the canonical form $\omega$ is holomorphic on $X \times X$. \label{xising}
\end{prop}

\begin{proof}
By Proposition \ref{candifprop} we need to show that $\xi$ is holomorphic outside the diagonal in $X \times X$, and has the desired expansion near the diagonal. It is clear that if $z(P) \neq z(Q)$ then all the $\xi_{\chi}$s are holomorphic at $(P,Q)$ (when one of $z(P)$ or $z(Q)$ is infinite, this follows from substituting the divisors from Proposition \ref{decomgen} and Equation \eqref{simpledivs} and the bounds on the degrees of $\widetilde{p}_{\chi}^{\chi}$ in the two variables), hence so is $\xi$. As in the proof of Theorem \ref{Szegoalg}, it suffices to take $Q$ like in Lemma \ref{highordFe} and $P$ in the neighborhood of $\sigma Q$ for some $\sigma \in A$, and to show that $\xi(P,Q)$ expands as $\big(\frac{\delta_{\sigma,Id_{X}}}{(z(P)-z(Q))^{2}}+O(1)\big)dz(P)dz(Q)$. But Corollary \ref{pchichi} evaluates the numerator $\widetilde{p}_{\chi}^{\chi}\big(z(P),z(Q)\big)$ of $\frac{\xi_{\chi}(P,Q)}{dz(P)dz(Q)}$ for $P$ near $\sigma Q$ as
\begin{equation}
\textstyle{y_{\chi}(Q)y_{\overline{\chi}}(Q)\big[1+\sum_{\sigma \neq Id_{X}}\sum_{j=1}^{r_{\sigma}}\frac{u_{\chi,\sigma}/o(\sigma)}{z(Q)-\lambda_{\sigma,j}}\big(z(P)-z(Q)\big)\big]+O\big[\big(z(P)-z(Q)\big)^{2}\big]}. \label{numxiexp}
\end{equation}
Multiplying the expression from Equation \eqref{numxiexp} by the expansion of $\frac{1}{y_{\chi}(P)}$ from Lemma \ref{ychiexp} and dividing by $y_{\overline{\chi}}(Q)\big(z(P)-z(Q)\big)^{2}$ shows that \[\textstyle{\frac{\xi_{\chi}(P,Q)}{dz(P)dz(Q)}=\frac{\overline{\chi(\sigma)}+O[(z(P)-z(Q))^{2}]}{(z(P)-z(Q))^{2}},\mathrm{\ and\ hence\ }\frac{\xi(P,Q)}{dz(P)dz(Q)}=\frac{\delta_{\sigma,Id_{X}}}{(z(P)-z(Q))^{2}}+O(1)}\]
as desired (sums of characters again). This proves the proposition.
\end{proof}

We therefore deduce the following corollary.
\begin{cor}
For $\chi$ and $\eta$ in $\widehat{A}$, set $f_{\chi,l}^{\eta}$ for $l\in\{0,1\}$ to be $\widetilde{f}_{\chi,l}^{\chi}$ when $\chi=\eta$ and just 0 otherwise. Then there are polynomials $f_{\chi,l}^{\eta}$ with $l\geq2$, whose degree is bounded by $t_{\chi}+t_{\overline{\eta}}-l-2+2\delta_{\chi,\eta}$, such that by setting \[\textstyle{p_{\chi}^{\eta}(z,w)=\sum_{l=2-2\delta_{\chi,\eta}}^{t_{\chi}}f_{\chi,l}^{\eta}(w)(z-w)^{l-2+2\delta_{\chi,\eta}}},\] the total degree of $p_{\chi,\eta}(z,w)$ in $w$ is bounded by $t_{\overline{\eta}}-2+2\delta_{\chi,\eta}$, and we get \[\omega(P,Q)=\sum_{\chi\in\widehat{A}}\frac{p_{\chi}^{\chi}\big(z(P),z(Q)\big)dz(P)dz(Q)}{ny_{\chi}(P)y_{\overline{\chi}}(Q)\big(z(P)-z(Q)\big)^{2}}+
\sum_{\chi\neq\eta}\frac{p_{\chi}^{\eta}\big(z(P),z(Q)\big)dz(P)dz(Q)}{ny_{\chi}(P)y_{\overline{\eta}}(Q)}.\] \label{omegaxi}
\end{cor}

\begin{proof}
Corollary \ref{hol11forms} allows us to write the difference $\omega(P,Q)-\xi(P,Q)$ from Proposition \ref{xising} as $\frac{1}{n}\sum_{\chi,\eta}\widehat{p}_{\chi}^{\eta}\big(z(P),z(Q)\big)\psi_{\overline{\chi}}(P)\psi_{\eta}(Q)$, where $\widehat{p}_{\chi}^{\eta}$ is a polynomial with the degree bounds from the corollary (the summands with trivial $\chi$ or $\eta$ are with $\widehat{p}_{\chi}^{\eta}=0$). Set $p_{\chi}^{\eta}(z,w)$ to be $\widetilde{p}_{\chi}^{\chi}(z,w)+\widehat{p}_{\chi}^{\chi}(z,w)(z-w)^{2}$ with $\widetilde{p}_{\chi}^{\chi}$ from Corollary \ref{pchichi} when $\chi=\eta$, and just $\widehat{p}_{\chi}^{\eta}(z,w)$ otherwise. Our expression for $\omega-\xi$ then combines with the definition of $\xi$ in Equation \eqref{xichidef} to yield the asserted formula for $\omega$. Moreover, an argument similar to Lemma \ref{polexist} allows us to write $\widehat{p}_{\chi}^{\eta}(z,w)$ as $\sum_{l=2}^{t_{\chi}}\widehat{f}_{\chi,l}^{\eta}(w)(z-w)^{l-2}$, with the degree of $\widehat{f}_{\chi,l}^{\eta}$ not exceeding $t_{\chi}+t_{\overline{\eta}}-l-2$ (the reason for this choice of index will soon become apparent), and with $\widehat{f}_{\chi,l}^{\eta}=0$ for $l\geq2$ wherever $\chi$ or $\eta$ equal $\mathbf{1}$. We set $\widehat{f}_{\chi,l}^{\eta}=0$ for any $\chi$ and $\eta$ wherever $l\leq1$, and define $f_{\chi,l}^{\eta}$ to be $\widetilde{f}_{\chi,l}^{\eta}+\widehat{f}_{\chi,l}^{\eta}$ if $\eta=\chi$ and just $\widehat{f}_{\chi,l}^{\eta}$ in case $\chi\neq\eta$. Then the expression for $p_{\chi}^{\eta}$ in terms of the $f_{\chi,l}^{\eta}$s is indeed the asserted one both when $\chi=\eta$ and when $\chi\neq\eta$. This proves the corollary.
\end{proof}

Note that only elements with $\chi=\eta$ contribute to the singular part of $\omega$, since the diagonal action of $A$ on $X \times X$ must preserve this singularity. A more explicit expression for $\omega$ in case $X$ is a genus 3 fibered product of two elliptic curves is given in Equation \eqref{difexp} in Section \ref{Examples} below.

The choices of indices in Corollary \ref{omegaxi} decomposes $\omega$ as
\begin{equation}
\omega=\sum_{\eta\in\widehat{A}}\frac{\omega_{\eta}}{n}\mathrm{\ with\ }\omega_{\eta}(P,Q)=\sum_{\chi\in\widehat{A}}\sum_{l=0}^{t_{\chi}}\frac{f_{\chi,l}^{\eta}\big(z(Q)\big)\big(z(P)-z(Q)\big)^{l-2}dz(P)dz(Q)}{y_{\chi}(P)y_{\overline{\eta}}(Q)}. \label{omegaPQexp}
\end{equation}
We shall need the following property of the components $\omega_{\eta}$ from Equation \eqref{omegaPQexp}.
\begin{lem}
For any $\eta\in\widehat{A}$ and any $P$ and $Q$ in $X$ the $(1,1)$-form $\omega_{\eta}(P,Q)$ from Equation \eqref{omegaPQexp} equals $\sum_{\sigma \in A}\overline{\eta(\sigma)}\omega(P,\sigma Q)$. For every such $\eta$, every $Q \in X$, and every $1 \leq i \leq g$ the integral $\int_{a_{i}}\omega_{\eta}(P,Q)$ with respect to $P$ vanishes. \label{intomegaeta}
\end{lem}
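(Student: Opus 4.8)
The plan is to prove the two assertions of Lemma~\ref{intomegaeta} in order, using the first to deduce the second. For the first assertion, I would start from the defining expression for $\omega_{\eta}$ in Equation~\eqref{omegaPQexp} and run the same Fourier-analysis-on-$\widehat{A}$ argument that was used in the proof of Proposition~\ref{xising}. Fix $P$ and $Q$, and for each $\sigma\in A$ expand $\omega(P,\sigma Q)$. Since $A$ acts on $y_{\chi}$ via $\chi$ (so that $y_{\chi}(\sigma Q)=\chi(\sigma)y_{\chi}(Q)$) and $z$ is $A$-invariant (so $z(\sigma Q)=z(Q)$), the summand of $\omega(P,\sigma Q)$ indexed by a pair $(\chi,\eta')$ in the decomposition right before Equation~\eqref{omegaPQexp} picks up a factor $\overline{\eta'(\sigma)}$ relative to the $Q$-value (this uses $\psi_{\eta'}(\sigma Q)=\eta'(\sigma)\psi_{\eta'}(Q)$, i.e. $1/y_{\overline{\eta'}}(\sigma Q)=\overline{\eta'(\sigma)}/y_{\overline{\eta'}}(Q)$ up to the $dz$ factor, which is $A$-invariant as a value in the coordinate $z$). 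Hence $\sum_{\sigma}\overline{\eta(\sigma)}\omega(P,\sigma Q)$ equals $\sum_{\chi,\eta'}\big(\sum_{\sigma}\overline{\eta(\sigma)}\,\overline{\eta'(\sigma)}\big)$ times the $(P,Q)$-part of the $(\chi,\eta')$-summand. The inner sum is $\sum_{\sigma}\overline{(\eta\eta')(\sigma)}$, which by orthogonality of characters equals $n$ when $\eta'=\overline{\eta}$ and $0$ otherwise. This kills all terms except $\eta'=\overline{\eta}$, and after dividing by the $n$ that already appears in $\omega=\frac1n\sum_{\eta'}\omega_{\eta'}$ we are left with exactly the $\eta$-component, i.e. $\omega_{\eta}(P,Q)$. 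One should check that the indexing convention $p_{\chi}^{\eta}$ with the subscript attached to $\psi_{\overline{\chi}}(P)$ and the superscript governing the $Q$-behavior is matched correctly, but this is bookkeeping; the character-orthogonality computation is the content.

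The second assertion is then immediate: by Proposition~\ref{candifprop} we have $\int_{P\in a_i}\omega(P,R)=0$ for every $R\in X$ and every $1\le i\le g$. Applying this with $R=\sigma Q$ for each $\sigma\in A$, forming the finite linear combination $\sum_{\sigma\in A}\overline{\eta(\sigma)}\int_{P\in a_i}\omega(P,\sigma Q)$, and interchanging the (finite) sum with the integral, we get $\int_{P\in a_i}\sum_{\sigma}\overline{\eta(\sigma)}\omega(P,\sigma Q)=0$. By the first assertion the integrand is $\omega_{\eta}(P,Q)$, so $\int_{P\in a_i}\omega_{\eta}(P,Q)=0$, as claimed. (Here one uses that for each $\sigma$ the form $\omega(P,\sigma Q)$, viewed in $P$, has no poles on the cycle $a_i$ once $Q$ is chosen generically—or one notes that the $a_i$-period is a holomorphic/locally constant function of the second argument away from the diagonal and extends by continuity—so the interchange of sum and integral is harmless.)

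The main obstacle, such as it is, is purely organizational rather than conceptual: keeping the action of $A$ on the two tensor factors straight, in particular the appearance of $\overline{\eta}$ versus $\eta$ and the placement of the complex conjugation, so that the orthogonality relation selects precisely the $\eta$-isotypic piece and not the $\overline{\eta}$-piece. Concretely one must verify that with the normalization chosen in Equation~\eqref{omegaPQexp}—where $\omega_{\eta}$ is built from $y_{\overline{\eta}}(Q)$ in the denominator—the correct twist to extract it is averaging against $\overline{\eta(\sigma)}$. Once the conventions are pinned down (and they are forced by consistency with $\omega=\frac1n\sum_\eta\omega_\eta$, which must recover $\omega$ when one sums over $\eta$, i.e. $\sum_\eta\overline{\eta(\sigma)}=0$ for $\sigma\neq\mathrm{id}$ and $=n$ for $\sigma=\mathrm{id}$), both statements follow with no further input. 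No analytic subtlety arises because the sum over $A$ is finite and $\omega$ is meromorphic with poles only along the diagonal.
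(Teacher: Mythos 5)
Your overall strategy is exactly the paper's (decompose $\omega$ into its $\widehat{A}$-isotypic pieces, use the transformation of each piece under $Q\mapsto\sigma Q$, and apply orthogonality of characters; then deduce the period statement by linearity from $\int_{P\in a_{i}}\omega(P,R)=0$), and your treatment of the second assertion is fine. But the first assertion as you have written it contains a conjugation error that breaks the key step. The $(\chi,\eta')$-summand of $\omega$ has $y_{\overline{\eta'}}(Q)$ in the \emph{denominator} and a numerator depending on $Q$ only through $z(Q)$; since $y_{\overline{\eta'}}(\sigma Q)=\overline{\eta'(\sigma)}\,y_{\overline{\eta'}}(Q)$, the summand picks up the factor $\eta'(\sigma)$, not $\overline{\eta'(\sigma)}$. (Your own correctly stated relation $\psi_{\eta'}(\sigma Q)=\eta'(\sigma)\psi_{\eta'}(Q)$ already forces this, since $\psi_{\eta'}=dz/y_{\overline{\eta'}}$ and $dz$ is $A$-invariant; the displayed ``i.e.'' $1/y_{\overline{\eta'}}(\sigma Q)=\overline{\eta'(\sigma)}/y_{\overline{\eta'}}(Q)$ contradicts it.)

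With your factor $\overline{\eta'(\sigma)}$, the inner sum $\sum_{\sigma}\overline{\eta(\sigma)}\,\overline{\eta'(\sigma)}=\sum_{\sigma}\overline{(\eta\eta')(\sigma)}$ is nonzero precisely when $\eta'=\overline{\eta}$, so your computation would produce $\omega_{\overline{\eta}}(P,Q)$, not $\omega_{\eta}(P,Q)$ --- and your concluding sentence (``kills all terms except $\eta'=\overline{\eta}$ \ldots we are left with the $\eta$-component'') does not follow from your intermediate claim. With the correct factor $\eta'(\sigma)$ the inner sum becomes $\sum_{\sigma}\overline{\eta(\sigma)}\eta'(\sigma)$, which is $n$ exactly when $\eta'=\eta$, and the argument then goes through and coincides with the paper's proof (which records this as $\omega_{\eta}(P,\sigma Q)=\eta(\sigma)\omega_{\eta}(P,Q)$ before summing). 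So the gap is a single, fixable bookkeeping error, but it sits precisely at the point you yourself flagged as the one thing that needed to be pinned down, and as written the proof establishes the wrong identity.
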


\begin{proof}
Checking the action of $A$ on all the terms appearing in the definition of $\omega_{\eta}$ in Equation \eqref{omegaPQexp} shows that the equality $\omega_{\eta}(P,\sigma Q)=\eta(\sigma)\omega_{\eta}(P,Q)$ holds for every $\sigma \in A$ and $P$ and $Q$ in $X$. By expanding $\omega(P,\sigma Q)$ as in Equation \eqref{omegaPQexp} we therefore get, for any $\chi\in\widehat{A}$, that
\[\sum_{\sigma \in A}\overline{\chi(\sigma)}\omega(P,\sigma Q)=\sum_{\sigma \in A}\sum_{\eta\in\widehat{A}}\tfrac{\overline{\chi(\sigma)}}{n}\omega_{\eta}(P,\sigma Q)=
\sum_{\eta\in\widehat{A}}\bigg[\sum_{\sigma \in A}\tfrac{\overline{\chi(\sigma)}\eta(\sigma)}{n}\bigg]\omega_{\eta}(P,Q),\] and the sum over $\sigma$ is known to be $\delta_{\chi,\eta}$. This establishes the first assertion, and the second one follows from the vanishing of the integral of each of the summands in that expression along each $a_{i}$. This proves the lemma.
\end{proof}

\smallskip

Another consequence of Lemma \ref{ychiexp} that we shall need is the following expression for the Bergman projective connection $G_{B}$ from Proposition \ref{candifprop}.
\begin{cor}
If $z(Q)\in\mathbb{C}\setminus\bigcup_{Id_{X}\neq\sigma \in A}\{\lambda_{\sigma,j}\}_{j=1}^{r_{\sigma}}$ then $G_{B}\big(z(Q)\big)/6$ equals \[\Bigg[\sum_{\chi\in\widehat{A}}\sum_{\eta\in\widehat{A}}\frac{f_{\chi,2}^{\eta}\big(z(Q)\big)}{ny_{\chi}(Q)y_{\overline{\eta}}(Q)}-\frac{1}{2}\sum_{\sigma \in A}\sum_{\rho \in A}\sum_{j=1}^{r_{\sigma}}\sum_{i=1}^{r_{\rho}}\frac{\gamma_{\sigma,\rho}-\delta_{\sigma,\rho}\delta_{i,j}\frac{o(\sigma)-1}{2o(\sigma)}}{\big(z(Q)-\lambda_{\sigma,j}\big)\big(z(Q)-\lambda_{\rho,i}\big)}\Bigg]\!dz(Q)^{2},\] where $\gamma_{\sigma,\rho}$ denotes the sum $\frac{1}{n}\sum_{\chi\in\widehat{A}}\frac{u_{\chi,\sigma}u_{\chi,\rho}}{o(\sigma)o(\rho)}$. \label{Bergman}
\end{cor}

\begin{proof}
We express $\omega(P,Q)$ as in Equation \eqref{omegaPQexp}, and then $\frac{\omega_{\eta}(P,Q)}{dz(P)dz(Q)}$ equals \[\frac{f_{\eta,0}^{\eta}\big(z(Q)\big)}{y_{\eta}(P)y_{\overline{\eta}}(Q)\big(z(P)-z(Q)\big)^{2}}+\frac{f_{\eta,1}^{\eta}\big(z(Q)\big)}{y_{\eta}(P)y_{\overline{\eta}}(Q)\big(z(P)-z(Q)\big)}+
\sum_{\chi\in\widehat{A}}\frac{f_{\chi,2}^{\eta}\big(z(Q)\big)}{y_{\chi}(P)y_{\overline{\eta}}(Q)},\] plus an error term of $O\big((z(P)-z(Q)\big)$ that vanishes at the limit $P \to Q$ (recall that $f_{\chi,0}^{\eta}=f_{\chi,1}^{\eta}=0$ for $\chi\neq\eta$ in Corollary \ref{omegaxi}). Taking this limit in the sum over $\chi$, and then summing over $\eta$ and dividing by $n$ (as in Equation \eqref{omegaPQexp}) yields the first asserted term. Now, Corollary \ref{pchichi} shows that the two remaining terms yield, as in Equation \eqref{numxiexp} with $\chi=\eta$, the expression
\[\frac{y_{\eta}(Q)}{y_{\eta}(P)\big(z(P)-z(Q)\big)^{2}}\cdot\textstyle{\big[1+\sum_{\sigma \neq Id_{X}}\sum_{j=1}^{r_{\sigma}}\frac{u_{\eta,\sigma}/o(\sigma)}{z(Q)-\lambda_{\sigma,j}}\big(z(P)-z(Q)\big)\big]}\] (the expression $y_{\overline{\eta}}(Q)$ cancels). We may substituting the expansion of $\frac{1}{y_{\chi}(P)}$ from Lemma \ref{ychiexp}, with $\chi=\eta$ and $\sigma=Id_{X}$, with the error term $O\big[\big(z(P)-z(Q)\big)^{3}\big]$ vanishing at the limit $P \to Q$ also when divided by $\big(z(P)-z(Q)\big)^{2}$. This replaces the first multiplier by $1\Big/\big(z(P)-z(Q)\big)^{2}$ times \[1-\sum_{\sigma \neq Id_{X}}\tfrac{u_{\eta,\sigma}}{o(\sigma)}\sum_{j=1}^{r_{\sigma}}\Big[\tfrac{z(P)-z(Q)}{z(Q)-\lambda_{\sigma,j}}-\tfrac{(z(P)-z(Q))^{2}}{2(z(Q)-\lambda_{\sigma,j})^{2}}\Big]+
\sum_{\sigma,j,\rho,i}\tfrac{u_{\eta,\sigma}u_{\eta,\rho}(z(P)-z(Q))^{2}/o(\sigma)o(\rho)}{2(z(Q)-\lambda_{\sigma,j})(z(Q)-\lambda_{\rho,i})},\] and multiplying by the second one we get \[\textstyle{\frac{1}{(z(P)-z(Q))^{2}}+\sum_{\sigma \neq Id_{X}}\sum_{j=1}^{r_{\sigma}}\frac{u_{\eta,\sigma}/o(\sigma)}{2(z(Q)-\lambda_{\sigma,j})^{2}}-\sum_{\sigma,j,\rho,i}\frac{u_{\eta,\sigma}u_{\eta,\rho}/o(\sigma)o(\rho)}{2(z(Q)-\lambda_{\sigma,j})(z(Q)-\lambda_{\rho,i})}},\] Summing over $\eta$ and dividing by $n$, we indeed get the singularity from Proposition \ref{candifprop}, and the asserted terms involving $\gamma_{\sigma,\rho}$. In the remaining terms we only have denominators of $\big(z(Q)-\lambda_{\sigma,j}\big)^{2}$ (represented in the required expression by the product of the Kronecker $\delta$-symbols), and the respective numerator is evaluated in the rightmost equality in Equation \eqref{vansum} to be the desired one. This completes the proof of the corollary.
\end{proof}

\section{Expanding Around Branch Points \label{BrPts}}

The expressions from Corollaries \ref{prodSzego} and \ref{Bergman} are related via Corollary 2.12 of \cite{[Fa]}, given in Proposition \ref{Fayident} below. The goal of this section is to consider this relation near a branch point, and deduce a useful equality. This is done by changing the coordinate to the natural one near a branch point. We also relate the coefficients from these corollaries to generalized Dedekind sums.

We now assume that $z(Q)$ is close to $\lambda_{\sigma,j}$ for some $\sigma$ and $j$, so that the local coordinate $t$ in that neighborhood satisfies $z-\lambda_{\sigma,j}=t^{o(\sigma)}$. Recall the following result, from, e.g., the discussion around Equation (27) of \cite{[Fa]}, about coordinate changes (this reference considers the case of $G_{B}$ from Proposition \ref{candifprop}, but the idea works in general), in which the \emph{Schwarzian derivative} $\mathcal{S}\{z,t\}$ of a coordinate $z$ with respect to another coordinate $t$ is $\frac{z'''(t)}{z'(t)}-\frac{3}{2}\cdot\frac{z''(t)^{2}}{z'(t)^{2}}$.
\begin{lem}
The expression $\frac{dz(P)dz(Q)}{(z(P)-z(Q))^{2}}$ takes, in the coordinate $t$, the form \[\Big[1+\tfrac{\mathcal{S}\{z,t\}(t(Q))}{6}\big(t(P)-t(Q)\big)^{2}+O\Big(\big(t(P)-t(Q)\big)^{3}\Big)\Big]\tfrac{dt(P)dt(Q)}{(t(P)-t(Q))^{2}}\] for $P$ near $Q$. When $z-\lambda_{\sigma,j}=t^{o(\sigma)}$ we have $\mathcal{S}\{z,t\}=-\frac{(o(\sigma)-1)(o(\sigma)+1)}{2t^{2}}$. \label{Schwarz}
\end{lem}
The second assertion in Lemma \ref{Schwarz} is easy, and we consider the expressions from Corollaries \ref{prodSzego} and \ref{Bergman} and from Equation \eqref{omegaPQexp} in the coordinate $t$.
\begin{lem}
Let $e$ be one of the characteristics appearing in Theorem \ref{Szegoalg}, and assume that $Q$ and $P$ lie in a neighborhood of a pre-image of $\lambda_{\sigma,j}$ described by the coordinate $t$ from above. Then $S[e](P,Q)S[-e](P,Q)$ expands as \[1+\frac{2o(\sigma)^{2}}{n}\sum_{(\rho,i)\neq(\sigma,j)}\frac{q_{e}(\sigma,j;\rho,i)}{\lambda_{\sigma,j}-\lambda_{\rho,i}}t(Q)^{o(\sigma)-2}\big(t(P)-t(Q)\big)^{2}+O\Big(\big(t(P)-t(Q)\big)^{3}\Big)\] times $\frac{dt(P)dt(Q)}{(t(P)-t(Q))^{2}}$, plus an error term of $O\big(t(Q)^{2o(\sigma)-2}\big)$. \label{expSzego}
\end{lem}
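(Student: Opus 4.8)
The plan is to take the expansion of $S[e](P,Q)S[-e](P,Q)$ around the diagonal already established in Lemma \ref{prodSzego}, written in the coordinate $z$, and re-express it in the natural coordinate $t$ near a pre-image of $\lambda_{\sigma,j}$, where $z-\lambda_{\sigma,j}=t^{o(\sigma)}$. First I would record the change-of-variable data: from $z(P)-z(Q)=t(P)^{o(\sigma)}-t(Q)^{o(\sigma)}$ one gets $z(P)-z(Q)=o(\sigma)t(Q)^{o(\sigma)-1}\big(t(P)-t(Q)\big)+O\big(\big(t(P)-t(Q)\big)^{2}\big)$, and $dz=o(\sigma)t^{o(\sigma)-1}dt$. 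Hence $\frac{dz(P)dz(Q)}{(z(P)-z(Q))^{2}}$ becomes $\frac{dt(P)dt(Q)}{(t(P)-t(Q))^{2}}$ times a factor of the form $1+O\big(t(P)-t(Q)\big)$, the corrections being controlled by the Taylor expansion of $t^{o(\sigma)}$; the leading behaviour of this prefactor as $t(Q)\to0$ is $1$. I would carry the expansion of this ratio to the order needed (second order in $t(P)-t(Q)$), which is routine.

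Next I would substitute into the bracketed factor of Lemma \ref{prodSzego}. Its quadratic term is $\frac{1}{n}\sum_{\rho,i,\sigma',j'}\frac{q_e(\sigma',j';\rho,i)(z(P)-z(Q))^2}{(z(Q)-\lambda_{\sigma',j'})(z(Q)-\lambda_{\rho,i})}$ — note Lemma \ref{prodSzego}'s sum runs over all four indices, but here I rename to avoid clash with the fixed $(\sigma,j)$. Now I split this sum according to how many of the two ``running branch points'' coincide with $\lambda_{\sigma,j}$. When $Q$ is near a pre-image of $\lambda_{\sigma,j}$, the factor $z(Q)-\lambda_{\sigma,j}=t(Q)^{o(\sigma)}$ is small, so terms with one index equal to $(\sigma,j)$ dominate; terms with neither index equal to $(\sigma,j)$ contribute only to the error, and the single term with both indices equal to $(\sigma,j)$ carries an extra $t(Q)^{-2o(\sigma)}$ and must be shown to be absorbed (this is where the hypothesis about the error term $O(t(Q)^{2o(\sigma)-2})$ and the structure of $q_e$ at coinciding indices enters). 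For the dominant terms, one index is $(\sigma,j)$ and the other is some $(\rho,i)\neq(\sigma,j)$; using $z(Q)-\lambda_{\sigma,j}=t(Q)^{o(\sigma)}$, $z(Q)-\lambda_{\rho,i}=\lambda_{\sigma,j}-\lambda_{\rho,i}+O(t(Q)^{o(\sigma)})$, and $(z(P)-z(Q))^2=o(\sigma)^2 t(Q)^{2o(\sigma)-2}\big(t(P)-t(Q)\big)^2+\cdots$, each such term becomes $\frac{o(\sigma)^2}{n}\cdot\frac{q_e(\sigma,j;\rho,i)}{\lambda_{\sigma,j}-\lambda_{\rho,i}}\,t(Q)^{o(\sigma)-2}\big(t(P)-t(Q)\big)^2$, up to the stated error; since each pair $(\sigma,j),(\rho,i)$ with the roles of the two summation indices interchanged gives the same contribution (using symmetry $q_e(\sigma,j;\rho,i)=q_e(\rho,i;\sigma,j)$ from Equation \eqref{qDelta}), one picks up the factor $2$, producing exactly the claimed coefficient $\frac{2o(\sigma)^2}{n}$.

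Finally I would combine the two expansions: the prefactor $1+O(t(P)-t(Q))$ from the canonical-form change of variables, multiplied by $1+(\text{quadratic term})+O((t(P)-t(Q))^3)$, still has no linear term (the linear correction from the coordinate change multiplies the ``$1$'', but by Lemma \ref{prodSzego} there is no linear term in the bracket to interfere, and one should check the linear coordinate-change correction itself does not survive — in fact it contributes only to the $O(t(Q)^{\ge 2o(\sigma)-2})$ error once we track powers of $t(Q)$, since that correction is $O(t(Q)^{o(\sigma)-?})$ relative to the leading term; I would verify the bookkeeping of $t(Q)$-powers carefully here). Collecting terms gives the asserted formula. The main obstacle I anticipate is the careful accounting of powers of $t(Q)$: showing precisely that the ``both indices equal $(\sigma,j)$'' term, the ``neither index'' terms, and the subleading pieces of the coordinate change all fall into the single error bucket $O\big(t(Q)^{2o(\sigma)-2}\big)$ while the cross terms survive at order $t(Q)^{o(\sigma)-2}$ — this requires knowing the behaviour of $q_e(\sigma,j;\sigma,j)$ and of the prefactor expansion to enough orders, and is the one genuinely delicate step rather than a mechanical substitution.
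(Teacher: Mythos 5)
Your overall strategy---substituting $z-\lambda_{\sigma,j}=t^{o(\sigma)}$ into the diagonal expansion of Lemma \ref{prodSzego}, splitting the four-index sum according to how many of the running indices equal $(\sigma,j)$, and using the symmetry of $q_{e}$ to produce the factor $2$ in the cross terms---is exactly the route the paper takes, and your treatment of the ``neither index'' and ``one index'' terms is correct. But there is a genuine gap in how you dispose of the two dangerous pieces. You assert that the diagonal term (both running indices equal to $(\sigma,j)$) ``must be shown to be absorbed'' into the error bucket $O\big(t(Q)^{2o(\sigma)-2}\big)$. It cannot be: writing $\Delta t$ for $t(P)-t(Q)$ and $t$ for $t(Q)$, that term equals $\frac{o(\sigma)^{2}q_{e}(\sigma,j;\sigma,j)}{n}\cdot\frac{(\Delta t)^{2}}{t^{2}}$ up to higher order in $\Delta t$, which diverges as $t\to0$ and is in no sense $O(t^{2o(\sigma)-2})$. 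Likewise the Jacobian prefactor $\frac{dz(P)dz(Q)}{(z(P)-z(Q))^{2}}$ divided by $\frac{dt(P)dt(Q)}{(\Delta t)^{2}}$ is not harmless to the order you need: its linear term in $\Delta t$ does vanish identically (so your worry there is moot), but its quadratic term is $-\frac{(o(\sigma)-1)(o(\sigma)+1)}{12}\big(\frac{\Delta t}{t}\big)^{2}$, again singular in $t$ and not expressible inside either stated error term.

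The actual mechanism, which your proposal misses, is an exact cancellation between these two singular pieces. Summing $q_{\chi\Delta}(\sigma,j;\sigma,j)$ over the $\widehat{A}$-orbit gives $\frac{n}{o(\sigma)}\sum_{u=0}^{o(\sigma)-1}\big(\frac{u}{o(\sigma)}-\frac{o(\sigma)-1}{2o(\sigma)}\big)^{2}$, whence $\frac{o(\sigma)^{2}}{n}q_{e}(\sigma,j;\sigma,j)=\frac{(o(\sigma)-1)(o(\sigma)+1)}{12}$, which is precisely the negative of the coefficient arising from the second-order Jacobian correction; the two $(\Delta t)^{2}/t^{2}$ contributions cancel and nothing of that order survives. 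Without this explicit evaluation the asserted expansion simply does not follow---an uncancelled $(\Delta t)^{2}/t^{2}$ term is incompatible with the stated form. So the ``genuinely delicate step'' you flag at the end is indeed the crux, but the resolution you propose for it (absorption into $O(t^{2o(\sigma)-2})$) is the wrong one, and the proof is incomplete until the cancellation is carried out.
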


\begin{proof}
The expression $\frac{1}{z(Q)-\lambda_{\rho,i}}$ from Corollary \ref{prodSzego} equals $t^{-o(\sigma)}$ in case $\rho=\sigma$ and $i=j$ and $\frac{1+O(t^{o(\sigma)})}{\lambda_{\sigma,j}-\lambda_{\rho,i}}$ otherwise, where $t=t(Q)$. We separate the fourfold sum from Corollary \ref{prodSzego} into three terms, where $S_{0}$ consists of the summands in which neither of the branching values involved coincides with $\lambda_{\sigma,j}$, $S_{1}$ is the sum over the summands where one branching value is $\lambda_{\sigma,j}$ and the other one is not (by symmetry we may take only those in which the first one is $\lambda_{\sigma,j}$ and multiply by 2), and $S_{2}$ is the unique summand where both branching values are $\lambda_{\sigma,j}$. By writing $t(P)=t+\Delta t$, applying Lemma \ref{Schwarz} for the multiplier $\frac{dz(P)dz(Q)}{(z(P)-z(Q))^{2}}$, and noting that $\big(z(P)-z(Q)\big)^{2}$ and the error term $O\big[\big(z(P)-z(Q)\big)^{3}\big]$ combine to $\big(o(\sigma)\big)^{2}t^{2o(\sigma)-2}(\Delta t)^{2}+O\big((\Delta t)^{3}\big)$, the expression from Corollary \ref{prodSzego} becomes
\begin{equation}
1+\big[\big(o(\sigma)\big)^{2}t^{2o(\sigma)-2}(S_{0}+S_{1}+S_{2})-\tfrac{(o(\sigma)-1)(o(\sigma)+1)}{12t^{2}}\big](\Delta t)^{2}+O\big((\Delta t)^{3}\big) \label{sepsum}
\end{equation}
times $dt(P)dt(Q)/(\Delta t)^{2}$.

Now, $S_{1}$ is the sum over $(\rho,i)\neq(\sigma,j)$ of terms of the form $\frac{2q_{e}(\sigma,j;\rho,i)+O(t^{o(\sigma)})}{n(\lambda_{\sigma,j}-\lambda_{\rho,i})t^{o(\sigma)}}$, which produces the desired coefficient in front of $(\Delta t)^{2}$. Moreover, all the error terms go into the error term $O\big(t(Q)^{2o(\sigma)-2}\big)$, together with $S_{0}$ since it is bounded as $t\to0$. In addition, by combining the definition, Equation \eqref{qDelta}, and the argument producing Equation \eqref{vansum}, we find that the term involving $S_{2}$ equals
\[\textstyle{\frac{o(\sigma)^{2}q_{e}(\sigma,j;\sigma,j)}{nt^{2}}=\frac{o(\sigma)}{t^{2}}\sum_{u=0}^{o(\sigma)-1}\big(\frac{u}{o(\sigma)}-\frac{o(\sigma)-1}{2o(\sigma)}\big)^{2}=
\frac{1}{t^{2}}\sum_{u=0}^{o(\sigma)-1}u\big(\frac{u}{o(\sigma)}-\frac{o(\sigma)-1}{2o(\sigma)}\big)}.\] But the two resulting sums give $\frac{(o(\sigma)-1)(2o(\sigma)-1)}{6t^{2}}$ and $\frac{(o(\sigma)-1)^{2}}{4t^{2}}$, the difference of which cancels with remaining term $\frac{(o(\sigma)-1)(o(\sigma)+1)}{12t^{2}}$ from Equation \eqref{sepsum}. This completes the proof of the lemma.
\end{proof}

\begin{lem}
Let $Q$ and $t=t(Q)$ be as in Lemma \ref{expSzego}, and take some $P \in X$ with $z=z(P)$. Then the differential $\omega_{\eta}(P,Q)$ from Equation \eqref{omegaPQexp} expands as \[(1-\delta_{\eta(\sigma),1})\tfrac{o(\sigma)}{c_{\eta}}\bigg[\tfrac{u_{\eta,\sigma}}{o(\sigma)}\tfrac{(f_{\eta,0}^{\eta})'(\lambda_{\sigma,j})}{y_{\eta}(P)(z-\lambda_{\sigma,j})}+
\sum_{\chi\in\widehat{A}}\sum_{l=2}^{t_{\chi}}\tfrac{f_{\chi,l}^{\eta}(\lambda_{\sigma,j})(z-\lambda_{\sigma,j})^{l-2}}{y_{\chi}(P)}\bigg]t^{o(\sigma)-1-u_{\overline{\eta},\sigma}}dzdt\]
for some non-zero constant $c_{\eta}$, up to an error term of $O(t^{o(\sigma)-1})$. In addition, $G_{B}(t)=G_{B}\big(t(Q)\big)$ expands, up to an error term of $O(t^{o(\sigma)})$, as $6$ times \[\bigg[\sum_{\chi\in\widehat{A}}\sum_{\eta\in\widehat{A}}\frac{o(\sigma)^{2}}{nc_{\overline{\chi}}c_{\eta}}f_{\chi,2}^{\eta}(\lambda_{\sigma,j})t^{2o(\sigma)-2-u_{\chi,\sigma}-u_{\overline{\eta},\sigma}}-
\sum_{(\rho,i)\neq(\sigma,j)}\frac{o(\sigma)^{2}\gamma_{\sigma,\rho}}{\lambda_{\sigma,j}-\lambda_{\rho,i}}t^{o(\sigma)-2}\bigg]dt^{2}.\] \label{expsint}
\end{lem}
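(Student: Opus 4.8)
The plan is to take the expansions already obtained in the $z$-coordinate and carefully re-expand them in the natural coordinate $t$ around a pre-image of $\lambda_{\sigma,j}$, using the relation $z-\lambda_{\sigma,j}=t^{o(\sigma)}$, and then extract the leading behavior. First I would handle $\omega_{\eta}(P,Q)$, starting from the explicit formula in Equation \eqref{omegaPQexp}. The point $Q$ lies over $\lambda_{\sigma,j}$, so every term $y_{\chi}(Q)$ has to be re-examined near that branch point: by the divisor formula for $y_{\chi}$ and the local behavior $z-\lambda_{\sigma,j}=t^{o(\sigma)}$, the function $y_{\chi}$ near this point behaves like a nonzero constant times $t^{u_{\chi,\sigma}}$ (the fractional power $(z-\lambda_{\sigma,j})^{u_{\chi,\sigma}/o(\sigma)}$ becomes $t^{u_{\chi,\sigma}}$ up to a scalar), and similarly $y_{\overline{\eta}}(Q)$ behaves like a constant times $t^{u_{\overline{\eta},\sigma}}$. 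Writing $c_\eta$ for the nonzero constant collecting these local factors (along with the scalars coming from the other branching values), the denominator $y_{\overline{\eta}}(Q)$ contributes $c_\eta^{-1}t^{-u_{\overline{\eta},\sigma}}$ up to higher-order corrections, and the differential $dz$ at $Q$ contributes $o(\sigma)t^{o(\sigma)-1}dt$. Combining these with the substitution $z(Q)=\lambda_{\sigma,j}+t^{o(\sigma)}$ inside the polynomials $f_{\chi,l}^{\eta}$ and the factor $(z(P)-z(Q))^{l-2}=(z-\lambda_{\sigma,j}-t^{o(\sigma)})^{l-2}$, one gets the leading term by replacing $z(Q)$ by $\lambda_{\sigma,j}$ everywhere, which produces $f_{\chi,l}^{\eta}(\lambda_{\sigma,j})(z-\lambda_{\sigma,j})^{l-2}$; the $l=1$ (hence $\chi=\eta$) term is the one requiring care, since $f_{\eta,1}^{\eta}=f_{\eta,0}^{\eta}\cdot\sum_\rho\frac{u_{\eta,\rho}}{o(\rho)}\sum_i\frac{1}{z-\lambda_{\rho,i}}$ has a pole at $\lambda_{\sigma,j}$ when $\eta(\sigma)\neq1$, so one must expand $f_{\eta,1}^{\eta}(z(Q))/(z(P)-z(Q))$ to extract the finite part $\frac{u_{\eta,\sigma}}{o(\sigma)}\cdot\frac{(f_{\eta,0}^{\eta})'(\lambda_{\sigma,j})}{z-\lambda_{\sigma,j}}$, accounting for the explicit separate term in the statement. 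The factor $(1-\delta_{\eta(\sigma),1})$ appears because if $\eta(\sigma)=1$ then $u_{\overline{\eta},\sigma}=0$ and also $u_{\eta,\sigma}=0$, but more importantly the whole leading order vanishes to higher order in $t$ (one should check that the power $t^{o(\sigma)-1}$ it would carry is precisely the error order, so the term contributes nothing at leading order); this parity bookkeeping via Lemma \ref{chicomp} is the cleanest way to see it.

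Next I would treat the Bergman connection. The starting point is Lemma \ref{Bergman}, which expresses $G_B(z(Q))/6$ as $\sum_{\chi,\eta}\frac{f_{\chi,2}^{\eta}(z(Q))}{ny_{\chi}(Q)y_{\overline{\eta}}(Q)}$ minus a double sum of $\frac{\gamma_{\sigma,\rho}-\delta_{\sigma,\rho}\delta_{ij}\frac{o(\sigma)-1}{2o(\sigma)}}{(z(Q)-\lambda_{\sigma,j})(z(Q)-\lambda_{\rho,i})}$, all times $dz(Q)^2$. But $G_B$ transforms by a Schwarzian under the coordinate change $z\mapsto t$, so $G_B^{(t)}(t(Q))\,dt^2=G_B^{(z)}(z(Q))\,dz^2+\{z,t\}\,dt^2$ where $\{z,t\}$ is the Schwarzian derivative of $z=\lambda_{\sigma,j}+t^{o(\sigma)}$ with respect to $t$; a direct computation gives $\{z,t\}=-\frac{o(\sigma)^2-1}{2t^2}$, i.e. $-\frac{(o(\sigma)-1)(o(\sigma)+1)}{2t^2}$. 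Then $dz(Q)^2=o(\sigma)^2 t^{2o(\sigma)-2}dt^2$, so the first sum in Lemma \ref{Bergman}, after substituting $y_{\chi}(Q)\approx c_{\overline{\chi}}\,t^{u_{\chi,\sigma}}$ — here I use that the local exponent of $y_\chi$ is $u_{\chi,\sigma}$, and I adopt the constant $c_{\overline{\chi}}$ to match the normalization of the first part of the lemma — becomes $\sum_{\chi,\eta}\frac{o(\sigma)^2}{n c_{\overline{\chi}}c_\eta}f_{\chi,2}^{\eta}(\lambda_{\sigma,j})\,t^{2o(\sigma)-2-u_{\chi,\sigma}-u_{\overline{\eta},\sigma}}$ at leading order. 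For the second sum in Lemma \ref{Bergman}: the terms with $(\rho,i)\neq(\sigma,j)$ and $(\sigma,j)$ not among the indices go into $O(t^{o(\sigma)})$ after multiplying by $t^{2o(\sigma)-2}$; the terms with exactly one of the index-pairs equal to $(\sigma,j)$ give, using $z(Q)-\lambda_{\sigma,j}=t^{o(\sigma)}$, a contribution $-o(\sigma)^2\gamma_{\sigma,\rho}\,t^{o(\sigma)-2}/(\lambda_{\sigma,j}-\lambda_{\rho,i})$ summed over $(\rho,i)\neq(\sigma,j)$ (the symmetry $\gamma_{\sigma,\rho}=\gamma_{\rho,\sigma}$ and the doubled count cancel the $\frac12$); and the diagonal term $(\rho,i)=(\sigma,j)$ contributes $-\frac12\cdot\frac{\gamma_{\sigma,\sigma}-\frac{o(\sigma)-1}{2o(\sigma)}}{t^{2o(\sigma)}}\cdot o(\sigma)^2 t^{2o(\sigma)-2}=-\frac{o(\sigma)^2}{2}\big(\gamma_{\sigma,\sigma}-\frac{o(\sigma)-1}{2o(\sigma)}\big)t^{-2}$. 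I would then need to check that this last $t^{-2}$ term exactly cancels the Schwarzian contribution $-\frac{(o(\sigma)-1)(o(\sigma)+1)}{2}t^{-2}$; equivalently that $o(\sigma)^2\big(\gamma_{\sigma,\sigma}-\frac{o(\sigma)-1}{2o(\sigma)}\big)=(o(\sigma)-1)(o(\sigma)+1)$. Since $\gamma_{\sigma,\sigma}=\frac1n\sum_\chi\frac{u_{\chi,\sigma}^2}{o(\sigma)^2}=\frac{1}{o(\sigma)^3}\sum_{u=0}^{o(\sigma)-1}u^2=\frac{(o(\sigma)-1)(2o(\sigma)-1)}{6o(\sigma)^2}$ (using the equidistribution of $u_{\chi,\sigma}$), a short arithmetic identity confirms this — in fact this is essentially the same cancellation of $\big(\frac{\Delta t}{t}\big)^2$-type terms that appeared in the proof of Lemma \ref{expSzego}.

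The main obstacle I anticipate is the careful bookkeeping of the local constants $c_\eta$ (and $c_{\overline{\chi}}$) and, relatedly, ensuring that the choice of fractional-power branches in $f_\Delta$, $y_\chi$, etc. is done consistently so that the stated formulas hold with a \emph{single} constant per character rather than separate constants for $P$ and $Q$; this is exactly the normalization subtlety flagged in the discussion preceding Theorem \ref{Szegoalg}, and it must be tracked so that the $f_{\chi,l}^{\eta}(\lambda_{\sigma,j})$ coefficients come out with the exact constants $\frac{o(\sigma)}{c_\eta}$ and $\frac{o(\sigma)^2}{nc_{\overline{\chi}}c_\eta}$ in the statement. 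Beyond that, the computation is a sequence of Taylor expansions: substituting $z-\lambda_{\sigma,j}=t^{o(\sigma)}$, expanding $y_\chi$ locally, expanding the polynomials $f_{\chi,l}^{\eta}$ around $\lambda_{\sigma,j}$, and isolating the leading powers of $t$ while all genuinely higher-order contributions (including all summands not involving the index $(\sigma,j)$, once multiplied by the relevant power of $t$) are absorbed into the error terms $O(t^{o(\sigma)-1})$ and $O(t^{o(\sigma)})$ respectively. I would organize the write-up by first doing $\omega_\eta$ in full, then quoting Lemma \ref{Bergman} and the Schwarzian transformation law, and finally verifying the single arithmetic identity that makes the $t^{-2}$ singularities cancel.
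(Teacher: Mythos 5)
Your proposal follows the paper's proof essentially step for step: substitute $z-\lambda_{\sigma,j}=t^{o(\sigma)}$, expand $y_{\overline{\eta}}$ locally as $c_{\eta}t^{u_{\overline{\eta},\sigma}}$ plus higher order, evaluate the polynomials $f_{\chi,l}^{\eta}$ at $\lambda_{\sigma,j}$, use the vanishing of $f_{\eta,0}^{\eta}$ there and the resulting value $\frac{u_{\eta,\sigma}}{o(\sigma)}\big(f_{\eta,0}^{\eta}\big)'(\lambda_{\sigma,j})$ of $f_{\eta,1}^{\eta}$, and, for the Bergman connection, combine Lemma \ref{Bergman} with the Schwarzian transformation law and check that the $t^{-2}$ singularities cancel. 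Two small corrections are needed. First, $f_{\eta,1}^{\eta}$ is a genuine polynomial (Corollary \ref{pchichi}); it has no pole at $\lambda_{\sigma,j}$ --- the apparent singularity in the product formula is cancelled by the simple zero of $f_{\eta,0}^{\eta}$ there, which is precisely why its value is the stated derivative. Second, the identity you propose to verify at the end, $o(\sigma)^{2}\big(\gamma_{\sigma,\sigma}-\tfrac{o(\sigma)-1}{2o(\sigma)}\big)=(o(\sigma)-1)(o(\sigma)+1)$, is false as written: with $\gamma_{\sigma,\sigma}=\tfrac{(o(\sigma)-1)(2o(\sigma)-1)}{6o(\sigma)^{2}}$ the left-hand side equals $-\tfrac{(o(\sigma)-1)(o(\sigma)+1)}{6}$. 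The discrepancy is a normalization slip: Lemma \ref{Bergman} computes $G_{B}/6$, so the diagonal contribution, which works out to $+\tfrac{(o(\sigma)-1)(o(\sigma)+1)}{12}t^{-2}$, must be matched against the Schwarzian \emph{divided by} $6$, namely $-\tfrac{(o(\sigma)-1)(o(\sigma)+1)}{12}t^{-2}$; with the factors tracked consistently the cancellation does hold, exactly as the analogous cancellation in Lemma \ref{expSzego}. Once these are repaired your write-up goes through as in the paper.
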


\begin{proof}
Write $y_{\overline{\eta}}(Q)$ as $t^{u_{\overline{\eta},\sigma}}$ times a local function $\varphi_{\eta}=\varphi_{\eta}\big(z(Q)\big)$ (involving fractional powers of $z(Q)-\lambda_{\rho,i}$ with $\lambda_{\rho,i}\neq\lambda_{\sigma,j}$), with $c_{\eta}=\varphi_{\eta}(\lambda_{\sigma,j})\neq0$. Substituting $z(Q)=\lambda_{\sigma,j}+t^{o(\sigma)}$ and $dz(Q)=o(\sigma)t^{o(\sigma)-1}dt$ in the remaining parts of the formula for $\omega_{\eta}(P,Q)$ in Equation \eqref{omegaPQexp} (including $\varphi_{\eta}$) shows that
\[\omega_{\eta}(P,Q)=\frac{o(\sigma)}{c_{\eta}}\Bigg[\sum_{\chi\in\widehat{A}}\sum_{l=0}^{t_{\chi}}\frac{f_{\chi,l}^{\eta}(\lambda_{\sigma,j})(z-\lambda_{\sigma,j})^{l-2}}{y_{\chi}(P)}+O(t^{o(\sigma)})\Bigg]
t^{o(\sigma)-1-u_{\overline{\eta},\sigma}}dzdt,\] with $z$ and $dz$ being those of $P$. If $\eta(\sigma)=1$ then $1-\delta_{\eta(\sigma),1}=0$ and the whole expression is indeed $O(t^{o(\sigma)-1})$, so we consider characters $\eta$ with $\eta(\sigma)\neq1$. Now, the polynomials $f_{\chi,l}^{\eta}$ from Corollary \ref{omegaxi} vanish for $l\leq1$ when $\chi\neq\eta$, and $f_{\eta,0}^{\eta}$ vanishes at $\lambda_{\sigma,j}$ when $\eta(\sigma)\neq1$ (see Lemma \ref{chicomp}). In addition, $f_{\eta,1}^{\eta}(w)$ takes the same value as $\frac{o(\sigma)}{c_{\eta}}\cdot\frac{f_{\eta,0}^{\eta}(w)}{w-\lambda_{\sigma,j}}$ at $w=\lambda_{\sigma,j}$, and this value is $\frac{u_{\eta,\sigma}}{o(\sigma)}\big(f_{\eta,0}^{\eta}\big)'(\lambda_{\sigma,j})$. This proves the first assertion.

For the second one, recall from Lemma \ref{Schwarz} that $G_{B}(t)/6$ is obtained by changing the variable $z(Q)$ in the expression for $G_{B}\big(z(Q)\big)/6$ from Corollary \ref{Bergman} to $t$, but also subtracting $\frac{(o(\sigma)-1)(o(\sigma)+1)}{12t^{2}}dt^{2}$. Following the proof of Lemma \ref{expSzego}, the sum over the characters in Corollary \ref{Bergman} yields the first asserted sum, up to error terms of the form $O(t^{3o(\sigma)-2-u_{\chi,\sigma}-u_{\overline{\eta},\sigma}})$, all of which are $O(t^{o(\sigma)})$ by the bound on the $u$-coefficients from Equation \eqref{uchisigma}. The fourfold sum in that corollary splits as in Equation \eqref{sepsum}, with the symmetry in the corresponding $S_{1}$ (hence a factor of 2). The explicit terms in $S_{1}$ again yield the second desired expression, while the error terms in $S_{1}$ and the full expression $S_{0}$ are $O(t^{2o(\sigma)-2})$, which is $O(t^{o(\sigma)})$ since $o(\sigma)\geq2$ for $\sigma \neq Id_{X}$. The term corresponding to $S_{2}$ here is $\frac{o(\sigma)^{2}\gamma_{\sigma,\sigma}-o(\sigma)(o(\sigma)-1)/2}{2t^{2}}$ by definition, and since \[\textstyle{o(\sigma)^{2}\gamma_{\sigma,\sigma}=\frac{1}{n}\sum_{\chi\in\widehat{A}}u_{\chi,\sigma}^{2}=\frac{1}{o(\sigma)}\sum_{u=0}^{o(\sigma)-1}u=\frac{(o(\sigma)-1)(2o(\sigma)-1)}{6}},\] it cancels with the modification arising from $\mathcal{S}\{z,t\}$ also in this case. This completes the proof of the lemma.
\end{proof}

\smallskip

We now turn to the coefficients $q_{e}(\sigma,j;\rho,i)$ and $\gamma_{\sigma,\rho}$. For an integer $d$ and a class $h\in\mathbb{Z}/d\mathbb{Z}$ that is co-prime to $d$ we define the function $\phi_{h+d\mathbb{Z}}:\mathbb{Z}\to\mathbb{C}$ by
\begin{equation}
\phi_{h+d\mathbb{Z}}(\xi)=\sum_{0 \neq k\in\mathbb{Z}/d\mathbb{Z}}\frac{\mathbf{e}(k\xi/d)}{\big(1-\mathbf{e}(kh/d)\big)\big(1-\mathbf{e}(-k/d)\big)}. \label{phihd}
\end{equation}
The value $\phi_{h+d\mathbb{Z}}(0)$ is essentially a Dedekind sum (see Lemma \ref{Dedekind} below), so that $\phi_{h+d\mathbb{Z}}(\xi)$ is a generalized Dedekind sum, which depends only on the image of $\xi$ in $\mathbb{Z}/d\mathbb{Z}$. In addition, given two elements $\sigma$ and $\rho$ of $A$, generating the cyclic subgroups $\langle\sigma\rangle$ and $\langle\rho\rangle$ respectively,
\begin{equation}
\mathrm{set}\quad d=\big|\langle\sigma\rangle\cap\langle\rho\rangle\big|,\quad\mathrm{and\ define}\quad h\in\mathbb{Z}/d\mathbb{Z}\quad\mathrm{by}\quad\rho^{o(\rho)/d}=(\sigma^{o(\sigma)/d})^{h}. \label{dhdef}
\end{equation}
Note that $d$ from Equation \eqref{dhdef} is a divisor of both $o(\sigma)$ and $o(\rho)$, and since either $\sigma^{o(\sigma)/d}$ or $\rho^{o(\rho)/d}$ generate $\langle\sigma\rangle\cap\langle\rho\rangle$, the value of $h$ in that equation exists and is unique, and it is co-prime to $d$. We now get the following relation.
\begin{prop}
Take a divisor $\Delta$ as in Theorem \ref{nonspdiv}, written as in Equation \eqref{normdiv}, and set $e=u(\Delta)+K$. Then for $\sigma$ and $\rho$ in $A$ let $d$ and $h$ be as in Equation \eqref{dhdef}, and given $1 \leq j \leq r_{\sigma}$ and $1 \leq i \leq r_{\rho}$ we have
\[q_{e}(\sigma,j;\rho,i)\!=\!\frac{n\phi_{h+d\mathbb{Z}}(\beta_{\rho,i}^{\Delta}-h\beta_{\sigma,j}^{\Delta})}{o(\sigma)o(\rho)}\mathrm{\ and\ }\gamma_{\sigma,\rho}\!=\!\frac{\phi_{h+d\mathbb{Z}}(0)}{o(\sigma)o(\rho)}+\frac{(o(\sigma)-1)(o(\rho)-1)}{4o(\sigma)o(\rho)}.\] \label{expval}
\end{prop}

\begin{proof}
For every $\chi$ the expression $q_{\chi\Delta}(\sigma,j;\rho,i)$ from Equation \eqref{qDelta} is the product of two terms, which we write as $\alpha_{\sigma,j}^{\Delta\chi}$ and $\alpha_{\rho,i}^{\Delta\chi}$, and Equation \eqref{vansum} shows that $\sum_{\chi\in\widehat{A}}\alpha_{\sigma,j}^{\chi\Delta}=\sum_{\chi\in\widehat{A}}\alpha_{\rho,i}^{\chi\Delta}=0$. The sum $\sum_{\chi\in\widehat{A}}\alpha_{\sigma,j}^{\chi\Delta}\alpha_{\rho,i}^{\chi\Delta}$ thus equals \[\sum_{\chi\in\widehat{A}}(\alpha_{\sigma,j}^{\chi\Delta}-\alpha_{\sigma,j}^{\Delta})\alpha_{\rho,i}^{\chi\Delta}+\alpha_{\sigma,j}^{\Delta}\cdot0=\sum_{\chi\in\widehat{A}}
(\alpha_{\sigma,j}^{\chi\Delta}-\alpha_{\sigma,j}^{\Delta})(\alpha_{\rho,i}^{\chi\Delta}-\alpha_{\rho,i}^{\Delta})+\alpha_{\rho,i}^{\Delta}\bigg[0-\sum_{\chi\in\widehat{A}}\alpha_{\sigma,j}^{\Delta}\bigg].\] Evaluating the difference $\alpha_{\sigma,j}^{\chi\Delta}-\alpha_{\sigma,j}^{\Delta}=\frac{\beta_{\sigma,j}^{\chi\Delta}-\beta_{\sigma,j}^{\Delta}}{o(\sigma)}$ (and similarly for $\rho$ and $i$) via Equation \eqref{epsDelsigjchi}, and reconstructing $q_{\Delta}(\sigma,j;\rho,i)$ again, we find that
\begin{equation}
q_{e}(\sigma,j;\rho,i)=\textstyle{\sum_{\chi\in\widehat{A}}\big(\frac{u_{\chi,\sigma}}{o(\sigma)}-\varepsilon_{\Delta,\sigma,j,\chi}\big)\big(\frac{u_{\chi,\rho}}{o(\rho)}-\varepsilon_{\Delta,\rho,i,\chi}\big)
-nq_{\Delta}(\sigma,j;\rho,i)}. \label{qeeval}
\end{equation}
Note that for $\beta_{\sigma,j}=\beta_{\rho,i}=0$ the two $\varepsilon$ terms vanish for all $\chi$, so that we can evaluate $n\gamma_{\sigma,\rho}$ by the same calculation.

Now, Equation \eqref{uchisigma} and the definition of $d$ and $h$ in Equation \eqref{dhdef} yield
\[\mathbf{e}\big(\tfrac{u_{\chi,\rho}}{d}\big)=\chi(\rho^{o(\rho)/d})=\chi(\sigma^{o(\sigma)/d})^{h}=\mathbf{e}\big(\tfrac{hu_{\chi,\sigma}}{d}\big)\mathrm{\ and\ hence\ }u_{\chi,\rho} \equiv hu_{\chi,\sigma}(\mathrm{mod\ }d)\] for every $\chi\in\widehat{A}$. The number of pairs $0 \leq u<o(\sigma)$ and $0 \leq v<o(\rho)$ satisfying such a congruence is $\frac{o(\sigma)o(\rho)}{d}$, like the order of the group $\langle\sigma,\rho\rangle$, which implies that every such pair is $u=u_{\chi,\sigma}$ and $v=u_{\chi,\rho}$ for $\frac{nd}{o(\sigma)o(\rho)}$ many characters $\chi$. As the sum $\frac{1}{d}\sum_{k\in\mathbb{Z}/d\mathbb{Z}}\mathbf{e}\big(\frac{k(hu-v)}{d}\big)$ equals 1 in case $u$ and $v$ satisfy our congruence and 0 otherwise, the sum over $\chi$ in Equation \eqref{qeeval} can be written as
\[\textstyle{\frac{nd}{o(\sigma)o(\rho)}\!\sum_{u=0}^{o(\sigma)-1}\sum_{v=0}^{o(\rho)-1}\big(\frac{u}{o(\sigma)}-\varepsilon_{\Delta,\sigma,j,u}\big)\big(\frac{v}{o(\rho)}-\varepsilon_{\Delta,\rho,i,v}\big) \frac{1}{d}\sum_{k\in\mathbb{Z}/d\mathbb{Z}}\mathbf{e}\big(\frac{k(hu-v)}{d}\big)}.\] Changing the summation order thus yields $\frac{n}{o(\sigma)o(\rho)}$ times
\[\textstyle{\sum_{k\in\mathbb{Z}/d\mathbb{Z}}\big[\sum_{u=0}^{o(\sigma)-1}\big(\frac{u}{o(\sigma)}-\varepsilon_{\Delta,\sigma,j,u}\big)\mathbf{e}\big(\frac{khu}{d}\big)\big]
\big[\sum_{v=0}^{o(\rho)-1}\big(\frac{v}{o(\rho)}-\varepsilon_{\Delta,\rho,i,v}\big)\mathbf{e}\big(\frac{-kv}{d}\big)\big]},\] and the sums involving the symbols $\varepsilon_{\Delta,\sigma,j,u}$ and $\varepsilon_{\Delta,\rho,i,v}$ from Equation \eqref{epsDelsigjchi} can be simplified to $\sum_{u=o(\sigma)-\beta_{\sigma,j}}^{o(\sigma)-1}\mathbf{e}\big(\frac{khu}{d}\big)$ and $\sum_{v=o(\rho)-\beta_{\rho,i}}^{o(\rho)-1}\mathbf{e}\big(\frac{-kv}{d}\big)$ respectively.

When $k=0$ the exponents equal 1, so that the simplified sums from the $\varepsilon$-symbols yield $\beta_{\sigma,j}$ and $\beta_{\rho,i}$, and the other sums over $u$ and $v$ produce $\frac{o(\sigma)-1}{2}$ and $\frac{o(\rho)-1}{2}$ respectively. Recalling the external coefficient $\frac{n}{o(\sigma)o(\rho)}$, Equation \eqref{qDelta} shows that these terms cancel with the term $nq_{\Delta}(\sigma,j;\rho,i)$ from Equation \eqref{qeeval}, and when $\beta_{\sigma,j}=\beta_{\rho,i}=0$ it produces the second term in the asserted formula for $\gamma_{\sigma,\rho}$. On the other hand, for $k\neq0$ the (geometric) sums from the $\varepsilon$-symbols yield $\frac{1-\mathbf{e}(-kh\beta_{\sigma,j}/d)}{1-\mathbf{e}(kh/d)}$ and $\frac{1-\mathbf{e}(k\beta_{\rho,i}/d)}{1-\mathbf{e}(-k/d)}$ respectively, and for the other sums we recall from Lemma 6.5 of \cite{[KZ]} that when $y$ is a non-trivial $r$th root of unity, the sum $\sum_{l=0}^{r-1}\frac{l}{r}y^{l}$ equals $\frac{-1}{1-y}$. We set $r=o(\sigma)$ and $y=\mathbf{e}\big(\frac{kh}{d}\big)$ (resp. $r=o(\rho)$ and $y=\mathbf{e}\big(\frac{-k}{d}\big)$) and deduce that the other sum over $u$ (resp. $v$) equals $-\frac{1}{1-\mathbf{e}(kh/d)}$ (resp. $-\frac{1}{1-\mathbf{e}(-k/d)}$), so that the total multipliers in the summand associated with $k$ are $\frac{-\mathbf{e}(-kh\beta_{\sigma,j}/d)}{1-\mathbf{e}(kh/d)}$ and $\frac{-\mathbf{e}(k\beta_{\rho,i}/d)}{1-\mathbf{e}(-k/d)}$. Multiplying by the coefficient $\frac{n}{o(\sigma)o(\rho)}$, and recalling that for $\gamma_{\sigma,\rho}$ (with $\beta_{\sigma,j}=\beta_{\rho,i}=0$) we have an extra coefficient of $\frac{1}{n}$, we find that both the required expressions now follow from Equation \eqref{phihd}. This completes the proof of the proposition.
\end{proof}
Note that $q_{e}(\sigma,j;\rho,i)$ depends only on $e$, but we evaluated it in Proposition \ref{expval} using the term $\beta_{\rho,i}^{\Delta}-h\beta_{\sigma,j}^{\Delta}$ arising from a divisor $\Delta$ with $u(\Delta)=e$. Equation \eqref{epsDelsigjchi} shows that replacing $\Delta$ by $\chi\Delta$ adds $u_{\chi,\rho}-hu_{\chi,\sigma}$ (which was seen in the proof of Proposition \ref{expval} to be a multiple of $d$) to that argument, plus multiples of $o(\sigma)$ and $o(\rho)$, both of which are also divisible by $d$. Since $\phi_{h+d\mathbb{Z}}(\xi)$ depends only on the image of $\xi$ in $\mathbb{Z}/d\mathbb{Z}$, we deduce that $\phi_{h+d\mathbb{Z}}(\beta_{\rho,i}^{\Delta}-h\beta_{\sigma,j}^{\Delta})$ indeed depends only on $e$ and not on the choice of the representing divisor $\Delta$.

\smallskip

Proposition \ref{expval} shows that $\phi_{h+d\mathbb{Z}}(\xi)\in\mathbb{Q}$ for every $d$, $h$, and $\xi$. We shall later need to bound the denominators universally.
\begin{lem}
If $d$ is co-prime to 6 then $\phi_{h+d\mathbb{Z}}(\xi)\in\mathbb{Z}$ for every $\xi\in\mathbb{Z}$. In case $d$ is odd but divisible by 3 we have $\phi_{h+d\mathbb{Z}}(\xi)\in\frac{-h}{3}+\mathbb{Z}$ (which is not integral since 3 does not divide $h$). For even $d$ not divisible by 3 the number $\phi_{h+d\mathbb{Z}}(\xi)$ lies in $\frac{1+2\xi}{4}+\mathbb{Z}$. Finally, if $d$ is divisible by 6 then $\phi_{h+d\mathbb{Z}}(\xi)$ belongs to $\frac{1+2\xi}{4}-\frac{h}{3}+\mathbb{Z}$. \label{Dedekind}
\end{lem}

\begin{proof}
Given $d$, $h$, and $\xi$ consider a cyclic group of order $d$ with generator $\sigma$, take $\rho=\sigma^{h}$, and assume that in the divisor $\Delta$ on some $Z_{d}$-curve $X$ (with positive $r_{\sigma}$ and $r_{\rho}$) we have indices $j$ and $i$ such that $\beta_{\sigma,j}^{\Delta}=0$ and $\beta_{\rho,i}^{\Delta}=\xi$. Then the dual group is cyclic as well, with a generator $\eta$ that sends $\sigma$ to $\mathbf{e}\big(\frac{1}{d}\big)$. Equations \eqref{uchisigma} and \eqref{epsDelsigjchi} then imply that \[\mathrm{for}\quad\chi=\eta^{u}\quad\mathrm{we \ have}\quad\beta_{\sigma,j}^{\chi\Delta}=u_{\chi,\Delta}=u\quad\mathrm{and}\quad\beta_{\rho,i}^{\chi\Delta}=d\big\{\tfrac{hu+\xi}{d}\big\},\] and as $\eta$ generates the dual group, Proposition \ref{expval} and Equation \eqref{qDelta} yield \[\textstyle{\phi_{h+d\mathbb{Z}}(\xi)=dq_{e}(\sigma,j;\rho,i)=d\sum_{u=0}^{d-1}\big(\frac{u}{d}-\frac{d-1}{2d}\big)\big(\big\{\frac{hu+\xi}{d}\big\}-\frac{d-1}{2d}\big)}.\] As this is now clearly a (generalized) Dedekind sum, we adapt the argument from Section 3 of \cite{[Rd]}. We separate the first multiplier, and recall from Equation \eqref{vansum} that then the constant $\frac{d-1}{2d}$ multiplies a vanishing sum. Using the definition of the fractional part, the remaining part of our expression for $\phi_{h+d\mathbb{Z}}(\xi)$ becomes
\begin{equation}
\textstyle{\sum_{u=0}^{d-1}u\big(\frac{hu}{d}+\frac{2\xi-(d-1)}{2d}-\big\lfloor\frac{hu+\xi}{d}\big\rfloor\big)\in\frac{h(d-1)(2d-1)}{6}+\frac{(d-1)(2\xi-d+1)}{4}+\mathbb{Z}}. \label{phidenom}
\end{equation}
Now, when $d$ is odd the second term in Equation \eqref{phidenom} is integral, and the first one is $\frac{1}{3}$ times the product of $h$, $\frac{d-1}{2}$, and $2d-1$. If $d$ is not divisible by 3 then 3 divides one of the latter numbers, while otherwise these numbers have residues 1 and 2 modulo 3. This proves the first two assertions. On the other hand, take $d=2k$ to be even, so that $h$ is odd and the second term in Equation \eqref{phidenom} becomes $\frac{2\xi-1}{4}$ plus the integer $(k-1)(\xi-k)$. When $d$ (or equivalently $k$) is not divisible by 3, the first term is the product of the odd numbers $h$, $2k-1$, and $4k-1$ divided by 6, and as one of the latter multipliers is divisible by 3 the product lies in $\frac{1}{2}+\mathbb{Z}$. Adding $\frac{2\xi-1}{4}$ from the second term yields the third assertion. Otherwise 3 divides $k$, and the first term is $\frac{h}{6}$ plus an integer. Writing it as $\frac{h}{2}-\frac{h}{3}\in\frac{1}{2}-\frac{h}{3}+\mathbb{Z}$ establishes the fourth assertion. This proves the lemma.
\end{proof}

At this point we invoke (again, as in \cite{[Na]}, \cite{[Ko2]}, and \cite{[Ko3]}) Corollary 2.12 of \cite{[Fa]}. Observing that $\theta[e](\zeta,\tau)$ differs from $\theta[0](\zeta+e,\tau)$ by the exponential of a linear function of $\{\zeta_{s}\}_{s=1}^{g}$, and the effect of this exponential disappears after taking a logarithm and differentiating with respect to two $\zeta$-variables, this corollary reads as follows.
\begin{prop}
Let $X$ be a compact Riemann surface, and let $e$ be a point in $J(X)$ such that $\theta[e](0,\tau)\neq0$. Then one has the equality
\[\textstyle{S[e](P,Q)S[-e](P,Q)=\omega(P,Q)+\sum_{r=1}^{g}\sum_{s=1}^{g}\frac{\partial^{2}\ln\theta[e]}{\partial\zeta_{r}\partial\zeta_{s}}\Big|_{\zeta=0}v_{r}(P)v_{s}(Q)}.\] \label{Fayident}
\end{prop}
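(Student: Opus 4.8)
\section*{Proof proposal for Proposition \ref{Fayident}}

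The plan is to follow the argument behind Corollary 2.12 of \cite{[Fa]}: I will show that the difference of the two sides is a holomorphic bidifferential on $X \times X$, and then identify its coefficients with the second logarithmic derivatives of $\theta[e]$ at the origin.

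First I would check that $S[e](P,Q)S[-e](P,Q)$ is a genuine meromorphic $(1,1)$-form on $X \times X$, not merely a section of a non-trivial line bundle: by Proposition \ref{Szegoprop} the factor $S[e]$ is multiplied, when the cycle $a_{i}$ or $b_{i}$ is added to $P$ (resp.\ to $Q$), by the unitary characters $\mathbf{e}(\varepsilon_{i}/2)$ or $\mathbf{e}(-\delta_{i}/2)$ (resp.\ their inverses) attached to $e = \Pi\frac{\varepsilon}{2}+I\frac{\delta}{2}$, while $S[-e]$ is multiplied by the inverse characters, so the product has trivial monodromy in each variable and descends to $X \times X$. Its only singularity lies along the diagonal: away from it each factor is holomorphic by Proposition \ref{Szegoprop}, and near $P = Q$, in a local coordinate $z$, Proposition \ref{Szegoprop} gives $S[\pm e](P,Q) = \frac{\sqrt{dz(P)}\sqrt{dz(Q)}}{z(P)-z(Q)}[1 + \sum_{s}\frac{\partial\ln\theta[\pm e]}{\partial z_{s}}\big|_{z=0}\frac{v_{s}(Q)}{dz(Q)}(z(P)-z(Q)) + O((z(P)-z(Q))^{2})]$. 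Since $\theta[-e](w) = \theta[e](-w)$, one has $\frac{\partial\ln\theta[-e]}{\partial z_{s}}\big|_{z=0} = -\frac{\partial\ln\theta[e]}{\partial z_{s}}\big|_{z=0}$, so the first-order terms cancel in the product and $S[e](P,Q)S[-e](P,Q) = \frac{dz(P)dz(Q)}{(z(P)-z(Q))^{2}}[1 + O((z(P)-z(Q))^{2})]$ (this is also recorded in Lemma \ref{prodSzego}). Comparing with the singularity $(\frac{1}{(z(P)-z(Q))^{2}} + \frac{G_{B}(z(Q))}{6} + O(z(P)-z(Q)))dz(P)dz(Q)$ of $\omega$ from Proposition \ref{candifprop}, the double poles and the (vanishing) simple poles both cancel, so $\Phi(P,Q) := S[e](P,Q)S[-e](P,Q) - \omega(P,Q)$ extends holomorphically across the diagonal and is therefore a holomorphic $(1,1)$-form on all of $X \times X$.

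Since the space of holomorphic $(1,1)$-forms on $X \times X$ is the external tensor square of the space of holomorphic differentials on $X$ (the fact underlying Corollary \ref{hol11forms}), we may write $\Phi(P,Q) = \sum_{r,s=1}^{g}c_{rs}v_{r}(P)v_{s}(Q)$ for constants $c_{rs}$, and the symmetry of $\omega$ together with $S[e](P,Q) = S[-e](Q,P)$ (the remark after Lemma \ref{prodSzego}) shows that $\Phi$ is symmetric, so we may take $c_{rs} = c_{sr}$. To evaluate the $c_{rs}$ I would carry the diagonal expansion one order further. Writing $S[e](P,Q)S[-e](P,Q) = \theta[e](u(P-Q))\theta[e](-u(P-Q))/(\theta[e](0)^{2}E(P,Q)^{2})$, the Taylor expansion $\theta[e](y)\theta[e](-y) = \theta[e](0)^{2}(1 + \sum_{r,s}\frac{\partial^{2}\ln\theta[e]}{\partial z_{r}\partial z_{s}}\big|_{z=0}y_{r}y_{s} + O(|y|^{3}))$ at the origin, taken at $y = u(P-Q)$ with $u_{r}(P-Q) = \frac{v_{r}(Q)}{dz(Q)}(z(P)-z(Q)) + O((z(P)-z(Q))^{2})$, together with the prime-form expansion $\frac{1}{E(P,Q)^{2}} = \frac{dz(P)dz(Q)}{(z(P)-z(Q))^{2}}(1 + \frac{G_{B}(z(Q))}{6}(z(P)-z(Q))^{2} + O((z(P)-z(Q))^{3}))$ (whose quadratic coefficient is exactly the one appearing in Proposition \ref{candifprop}), shows that the $G_{B}$-terms cancel in $\Phi$ and that $\Phi(Q,Q) = \sum_{r,s}\frac{\partial^{2}\ln\theta[e]}{\partial z_{r}\partial z_{s}}\big|_{z=0}v_{r}(Q)v_{s}(Q)$. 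Hence $\sum_{r,s}(c_{rs} - \frac{\partial^{2}\ln\theta[e]}{\partial z_{r}\partial z_{s}}\big|_{z=0})v_{r}(Q)v_{s}(Q)$ vanishes identically in $Q$; when the products $v_{r}v_{s}$ are linearly independent (e.g.\ for non-hyperelliptic $X$) this already forces $c_{rs} = \frac{\partial^{2}\ln\theta[e]}{\partial z_{r}\partial z_{s}}\big|_{z=0}$, and in general one reaches the same conclusion by instead integrating $\Phi$ in $P$ along $a_{i}$ (using that $\omega$ has vanishing $a$-periods and that $\int_{a_{i}}v_{r}=\delta_{ir}$) and computing $\int_{P\in a_{i}}S[e](P,Q)S[-e](P,Q)$ from the theta expression via Riemann's bilinear relations, which is Fay's computation in Corollary 2.12 of \cite{[Fa]}. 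This period computation is the technical heart of the statement; everything preceding it is a formal consequence of facts already recorded above.
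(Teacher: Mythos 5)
You should first be aware that the paper does not prove Proposition \ref{Fayident}: it is quoted verbatim from Corollary 2.12 of \cite{[Fa]}, the only accompanying remark being that the second logarithmic derivatives of $\theta$ near $e$ and of $\theta[e]$ near $0$ coincide because the two functions differ by the exponential of a linear form (a point which your use of the exact identity $\theta[-e](y)=\theta[e](-y)$ handles correctly). Your reconstruction follows the standard proof and is sound in outline: the product descends to $X\times X$ because the unitary characters attached to $e$ and $-e$ are inverse, the first-order terms of the diagonal expansions cancel so that $\Phi=S[e]S[-e]-\omega$ extends to a holomorphic bidifferential, and its coefficients in the basis $v_{r}(P)v_{s}(Q)$ are determined by $a$-periods since $\omega$ has vanishing ones.

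Two caveats. First, your parenthetical claim that the products $v_{r}v_{s}$ are linearly independent for non-hyperelliptic $X$ is false once $g\geq4$ (the canonical curve then lies on quadrics), so the diagonal evaluation of $\Phi(Q,Q)$ essentially never suffices in the generality needed here and is best omitted; it also silently invokes the relation $\omega=d_{P}d_{Q}\ln E$ in order to identify the quadratic coefficient of $E^{-2}$ with $G_{B}/6$, a relation this paper nowhere states (it characterizes $\omega$ only axiomatically in Proposition \ref{candifprop}). Second, the step you defer to Fay, namely the evaluation of $\int_{P\in a_{i}}S[e](P,Q)S[-e](P,Q)$ from the theta expression via quasi-periodicity and the bilinear relations, is precisely the nontrivial content of the statement; deferring it is defensible given that the paper itself cites \cite{[Fa]} for the whole proposition, but you should then present your argument as a reduction to that computation rather than as a self-contained proof.
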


In our setting, we recall from Proposition \ref{liftcirc} that for $Id\neq\sigma \in A$ and an index $1 \leq j \leq r_{\sigma}$ there are $\frac{n}{o(\sigma)}$ points $R \in X$ with $z(R)=\lambda_{\sigma,j}$, with respective coordinates $t_{R}$ satisfying $t_{R}^{o(\sigma)}=z-\lambda_{\sigma,j}$. We now deduce the following equality.
\begin{cor}
For every characteristic $e=u(\Delta)+K$ considered in Theorems \ref{nonspdiv} and \ref{Szegoalg}, every $Id_{X}\neq\sigma \in A$, and every $1\leq j\leq r_{\sigma}$, the expression
\begin{equation}
n\sum_{(\rho,i)\neq(\sigma,j)}\frac{2\phi_{h+d\mathbb{Z}}(\xi)+\phi_{h+d\mathbb{Z}}(0)+\frac{(o(\sigma)-1)(o(\rho)-1)}{4}}
{o(\sigma)o(\rho)(\lambda_{\sigma,j}-\lambda_{\rho,i})}-\sum_{\{\chi\in\widehat{A}|\chi(\sigma)\neq1\}}\frac{f_{\chi,2}^{\chi}(\lambda_{\sigma,j})}{(f_{\chi,0}^{\chi})'(\lambda_{\sigma,j})}, \label{termswithphi}
\end{equation}
with $d$ and $h$ as in Equation \eqref{dhdef} and where $\xi=\beta_{\rho,i}^{\Delta}-h\beta_{\sigma,j}^{\Delta}$, equals
\begin{equation}
\frac{1}{o(\sigma)\big(o(\sigma)-2\big)!}\sum_{r=1}^{g}\sum_{s=1}^{g}\frac{\partial^{2}\ln\theta[e]}{\partial\zeta_{r}\partial\zeta_{s}}\bigg|_{\zeta=0}
\sum_{\{R|z(R)=\lambda_{\sigma,j}\}}\frac{d^{o(\sigma)-2}}{dt_{R}^{o(\sigma)-2}}\bigg(\frac{v_{r}}{dt_{R}}\cdot\frac{v_{s}}{dt_{R}}\bigg)\bigg|_{t_{R}=0}. \label{dervsvr}
\end{equation} \label{firstrel}
\end{cor}

\begin{proof}
Proposition \ref{Fayident} yields via a simple summation that the difference
\begin{equation}
\textstyle{\sum_{\rho \in A}S[e](\rho P,\rho Q)S[-e](\rho P,\rho Q)-\sum_{\rho \in A}\omega(\rho P,\rho Q)} \label{sumSzegoomega}
\end{equation}
equals
\begin{equation}
\textstyle{\sum_{\rho \in A}\sum_{r=1}^{g}\sum_{s=1}^{g}\frac{\partial^{2}\ln\theta[e]}{\partial\zeta_{r}\partial\zeta_{s}}\Big|_{\zeta=0}v_{r}(\rho P)v_{s}(\rho Q)}. \label{thetavrho}
\end{equation}
We assume that $Q$ is in the neighborhood of some pre-image $R$ of $\lambda_{\sigma,j}$, with the coordinate $t=t_{R}$, and that $P$ is close to $Q$, and expand the terms from the equality in $t(Q)$ and $t(P)-t(Q)$. Since the expansion of $S[e](P,Q)S[-e](P,Q)$ in Corollary \ref{prodSzego} is only in terms of $z$-values, the first term in Equation \eqref{sumSzegoomega} can be written as $n$ times the expansion in $t$ appearing in Lemma \ref{expSzego}. Next, we write $\omega$ as in Proposition \ref{candifprop}, and in the expression for $G_{B}/6$ from Corollary \ref{Bergman} the action of $\rho$ again leaves the term involving $z$ invariant, but the factor $y_{\chi}(Q)y_{\overline{\eta}}(Q)$ is multiplied by $\chi(\rho)\overline{\eta}(\rho)$. Changing the coordinate to $t$, the sum over $\rho$ thus gives $n$ times the value of $G_{B}(t)/6$ given in Lemma \ref{expsint}, but with only the terms in which $\eta=\chi$ remaining in the sum over the characters there. The resulting expansion of Equation \eqref{sumSzegoomega} is therefore $o(\sigma)^{2}dt(P)dt(Q)$ times
\begin{equation}
\sum_{(\rho,i)\neq(\sigma,j)}\frac{2q_{e}(\sigma,j;\rho,i)+n\gamma_{\sigma,\rho}}{\lambda_{\sigma,j}-\lambda_{\rho,i}}t(Q)^{o(\sigma)-2}-
\sum_{\chi\in\widehat{A}}\frac{f_{\chi,2}^{\chi}(\lambda_{\sigma,j})}{c_{\overline{\chi}}c_{\chi}}t(Q)^{2o(\sigma)-2-u_{\chi,\sigma}-u_{\overline{\chi},\sigma}}, \label{expofdif}
\end{equation}
up to $O\big(t(P)-t(Q)\big)$ and another error term of the form $O\big(t(Q)^{o(\sigma)}\big)$. Lemma \ref{chicomp} shows that the exponent in the term associated with $\chi$ in Equation \eqref{expofdif} is $2o(\sigma)-2$ when $\chi(\sigma)=1$ and $o(\sigma)-2$ otherwise, so that the former elements can be included in the error term $O\big(t(Q)^{o(\sigma)}\big)$. Moreover, when $\chi(\sigma)\neq1$ the product $c_{\chi}c_{\overline{\chi}}$ in the denominator is the value of $\frac{y_{\chi}y_{\overline{\chi}}}{z-\lambda_{\sigma,j}}$ at a pre-image of $\lambda_{\sigma,j}$, which was evaluated as $\big(f_{\chi,0}^{\chi}\big)'(\lambda_{\sigma,j})$ (which is non-zero since $f_{\chi,0}^{\chi}$ has no multiple roots). We evaluate $2q_{e}(\sigma,j;\rho,i)$ and $n\gamma_{\sigma,\rho}$ using Proposition \ref{expval}, and deduce that Equation \eqref{termswithphi} gives the coefficient of $t(Q)^{o(\sigma)-2}$ in Equation \eqref{expofdif}.

Since Proposition \ref{Fayident} compares Equations \eqref{sumSzegoomega} and \eqref{thetavrho}, and we have shown that multiplying Equation \eqref{termswithphi} by $o(\sigma)^{2}dt(P)dt(Q)$ yields the coefficient of $t(Q)^{o(\sigma)-2}$ in the expansion of the former in terms of $t(Q)$ and $t(P)-t(Q)$, we must therefore expand the expression from Equation \eqref{thetavrho} in the same variables, divide by $o(\sigma)^{2}dt(P)dt(Q)$, and see that the coefficient of $t(Q)^{o(\sigma)-2}$ is the one from Equation \eqref{dervsvr}. The theta derivatives are independent of $P$ and $Q$, and since we consider the ``constant term in powers of $t(P)-t(Q)$'', we may substitute $P=Q$ in $v_{r}$ in Equation \eqref{thetavrho}. For the contribution of our point $Q$, near the pre-image $R$ of $\lambda_{\sigma,j}$ (at which $t_{R}$ vanishes), we may therefore evaluate $\frac{v_{r}}{dt_{R}}\cdot\frac{v_{s}}{dt_{R}}$ using its Taylor expansion (divided also by $o(\sigma)^{2}$). Doing so with $\rho Q$ for each $\rho \in A$, we find that each one of the $\frac{n}{o(\sigma)}$ pre-images of $\lambda_{\sigma,j}$ is obtained in this way by $o(\sigma)$ elements $\rho \in A$ (these are the different cosets of the stabilizer of $R$ in $A$ from Proposition \ref{liftcirc}), and different such elements will give the same coordinate $t$ but multiplied by a different root of unity of order $o(\sigma)$. Because of the division by $dt^{2}$, replacing $t$ by $\alpha t$ with such a root of unity $\alpha$ multiplies the coefficient of $t^{k}$ of $\alpha^{k-2}$, so that in the summation over the stabilizer of $R$ in $A$ the coefficients of $t^{k}$ for $k\in-2+o(\sigma)\mathbb{Z}$ (that are invariant) are multiplied by $o(\sigma)$, and the others vanish. Summing over the full group $A$ collects the resulting expansions from all the pre-images of $\lambda_{\sigma,j}$, and as the exponent $o(\sigma)-2$ in which we are interested is such that its Taylor coefficient survives, substituting it indeed yields Equation \eqref{dervsvr}. This completes the proof of the corollary.
\end{proof}

\begin{rmk}
The references \cite{[Na]} and \cite{[Ko2]} did not apply the averaging included in the proof of Corollary \ref{firstrel} (the sum $\sum_{\alpha=0}^{o(\sigma)-2}\binom{o(\sigma)-2}{\alpha}v_{r}^{(\alpha)}(Q)v_{s}^{(o(\sigma)-2-\alpha)}(Q)$ from these references is just the expansion of the derivative of the product $\frac{v_{r}}{dt_{R}}\cdot\frac{v_{s}}{dt_{R}}$). We carried it out in order to leave only the terms with $\chi=\eta$ in the relevant sum in Lemma \ref{expsint} (for using Proposition \ref{fchichi2} below), which also improves the error term to $O(t^{2o(\sigma)-2})$ (in correspondence with the expressions appearing there being $A$-invariant and involving $dt(P)dt(Q)$), as well as to get the sum over all the pre-images of $\lambda_{\sigma,j}$ in Corollary \ref{firstrel} (as in our application of Theorem \ref{variational} below). When $A$ is cyclic and $\sigma$ is a generator, the value of $u_{\chi,\sigma}$ determines $\chi$, there is only one pre-image of $\lambda_{\sigma,j}$ in $X$, and the only contribution to the coefficient of $t^{n-2}dt^{2}$ in the sum over $\chi$ and $\eta$ arises from terms with $\chi=\eta\neq\mathbf{1}$. This is why these references could avoid the averaging. Note that none of these statements hold if $\langle\sigma\rangle \neq A$, so that in any case that is more general than the one considered in \cite{[Ko2]}, this averaging must be executed. \label{NaKoexp}
\end{rmk}

\section{The Thomae Formulae \label{Thomae}}

In this section we replace several terms in Corollary \ref{firstrel} by equivalent expressions, in order to produce a differential equation whose solution will yield the Thomae formula that we seek. The combination of the polynomials is related to the derivative of a certain determinant, and the term involving derivatives of theta functions will become a derivative with respect to a branching value. Since we shall employ integration of some meromorphic differentials with non-trivial residues, the notation $a_{i}$ and $b_{i}$ with $1 \leq i \leq g$ here will stand for fixed smooth paths on our $A$-cover $X$ of $\mathbb{CP}^{1}$ (representing the corresponding homology classes, of course). We assume that these paths do not pass through any branch point or through any pole of $z$. We shall consider the numbers $r_{\rho}$ with $Id_{X}\neq\rho \in A$ as fixed, and with them the genus $g$ (by Proposition \ref{genus}). Hence we may view $X$ as a constant real surface (together with the paths $a_{i}$ and $b_{i}$ with $1 \leq i \leq g$), and let the complex structure vary with the choice of the branching values $\lambda_{\rho,i}$ with $Id_{X}\neq\rho \in A$ and $1 \leq i \leq r_{\rho}$, or more explicitly with respect to perturbing one particular value $\lambda_{\sigma,j}$.

Denote by $C$ the transition matrix between the dual basis $v_{s}$, $1 \leq s \leq g$ and the basis $\bigcup_{\mathbf{1}\neq\chi\in\widehat{A}}\big\{z^{k}\psi_{\chi}\big|0 \leq k \leq t_{\overline{\chi}}-2\big\}$ from Proposition \ref{holdif}. Its entries are of the form $\int_{a_{i}}z^{k}\psi_{\chi}$ (by the definition of the former basis), where $i$ is the row index and $\chi$ and $k$ represent the column. As in \cite{[Na]}, \cite{[Ko2]}, and \cite{[Ko3]}, we shall require the evaluation of the derivative $\frac{\partial\ln\det C}{\partial\lambda_{\sigma,j}}$, using the following lemma.
\begin{lem}
Let $B_{\sigma,j}$ denote the matrix with entries $\int_{a_{i}}(z-\lambda_{\sigma,j})^{k}\psi_{\chi}$. For any $\chi\in\widehat{A}$ with $t_{\chi}\geq2$ we define $B_{\sigma,j}^{\chi}$ to be the matrix $B_{\sigma,j}$ but in which the column corresponding to the differential $\psi_{\overline{\chi}}=\frac{dz}{y_{\chi}}$ (with $k=0$) is replaced by the integrals of $\frac{u_{\chi,\sigma}}{o(\sigma)}\cdot\frac{\psi_{\overline{\chi}}}{z-\lambda_{\sigma,j}}$. Then we have the equality \[\textstyle{\frac{\partial\ln\det C}{\partial\lambda_{\sigma,j}}=\sum_{\{\chi\in\widehat{A}|t_{\chi}\geq2,\chi(\sigma)\neq1\}}\det B_{\sigma,j}^{\chi}\big/\det B_{\sigma,j}}.\] \label{lndetCder}
\end{lem}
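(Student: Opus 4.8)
The plan is to reduce the statement to the Leibniz rule for the derivative of a determinant, after trading $C$ for the matrix $B_{\sigma,j}$. First I would note that $\det C$ and $\det B_{\sigma,j}$ differ only by a constant that does not depend on $\lambda_{\sigma,j}$: since $(z-\lambda_{\sigma,j})^{k}=z^{k}+(\text{lower order in }z)$, the columns of $B_{\sigma,j}$ arise from those of $C$ by a change of basis that is block-unipotent (one unipotent block for each character $\chi$) and hence of determinant $1$ (or $\pm1$ after reordering columns). Therefore $\frac{\partial(\ln\det C)}{\partial\lambda_{\sigma,j}}=\frac{\partial(\ln\det B_{\sigma,j})}{\partial\lambda_{\sigma,j}}$, and it suffices to evaluate the right-hand side. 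As explained in the paragraphs preceding the lemma, the representing paths $a_{i}$ stay fixed during a small perturbation of $\lambda_{\sigma,j}$ and avoid all branch points and poles of $z$, so every period integral may be differentiated under the integral sign, i.e. by differentiating its integrand.

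The key computation is that of $\frac{\partial\psi_{\overline{\chi}}}{\partial\lambda_{\sigma,j}}$. Using the explicit normalized form of the functions from Section \ref{Divs}, namely $\psi_{\overline{\chi}}=\frac{dz}{y_{\chi}}$ with $y_{\chi}$ (up to a $\lambda_{\sigma,j}$-independent scalar) the product $\prod_{\rho,i}(z-\lambda_{\rho,i})^{u_{\chi,\rho}/o(\rho)}$, the only factor depending on $\lambda_{\sigma,j}$ is $(z-\lambda_{\sigma,j})^{-u_{\chi,\sigma}/o(\sigma)}$, and from $\frac{\partial}{\partial\lambda_{\sigma,j}}(z-\lambda_{\sigma,j})^{-a}=\frac{a}{z-\lambda_{\sigma,j}}(z-\lambda_{\sigma,j})^{-a}$ one gets $\frac{\partial\psi_{\overline{\chi}}}{\partial\lambda_{\sigma,j}}=\frac{u_{\chi,\sigma}}{o(\sigma)}\cdot\frac{\psi_{\overline{\chi}}}{z-\lambda_{\sigma,j}}$. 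Consequently, for the column of $B_{\sigma,j}$ attached to the differential $(z-\lambda_{\sigma,j})^{k}\psi_{\overline{\chi}}$ with $0\leq k\leq t_{\chi}-2$, differentiating the integrand produces
\[\frac{\partial}{\partial\lambda_{\sigma,j}}\big[(z-\lambda_{\sigma,j})^{k}\psi_{\overline{\chi}}\big]=\Big(\frac{u_{\chi,\sigma}}{o(\sigma)}-k\Big)(z-\lambda_{\sigma,j})^{k-1}\psi_{\overline{\chi}},\]
where for $k\geq1$ the right-hand side is a scalar multiple of the differential attached to the column with index $k-1$ in the same $\chi$-block, which already occurs in $B_{\sigma,j}$; this is exactly why working with $B_{\sigma,j}$ (rather than $C$) is convenient.

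Now I would invoke multilinearity of the determinant in its columns: $\frac{\partial\det B_{\sigma,j}}{\partial\lambda_{\sigma,j}}$ equals the sum over all columns of the determinant of $B_{\sigma,j}$ with that one column replaced by its $\lambda_{\sigma,j}$-derivative. By the last display, each column with $k\geq1$ contributes a determinant with two proportional columns, hence $0$. The columns with $k=0$ exist precisely for the characters $\chi$ with $t_{\chi}\geq2$, and such a column is replaced by $\frac{u_{\chi,\sigma}}{o(\sigma)}$ times the period integrals of $\frac{\psi_{\overline{\chi}}}{z-\lambda_{\sigma,j}}$ --- a legitimate integrand along the $a_{i}$, since $\frac{\psi_{\overline{\chi}}}{z-\lambda_{\sigma,j}}$ has poles only at the pre-images of $\lambda_{\sigma,j}$, which lie off the chosen paths. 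The resulting matrix is exactly $B_{\sigma,j}^{\chi}$. Hence $\frac{\partial\det B_{\sigma,j}}{\partial\lambda_{\sigma,j}}=\sum_{\{\chi\in\widehat{A}|t_{\chi}\geq2\}}\det B_{\sigma,j}^{\chi}$, and dividing by $\det B_{\sigma,j}\neq0$ (invertibility of $C$) yields the claimed formula.

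The one point that requires care is the normalization of the $\psi_{\chi}$: the clean identity $\frac{\partial\psi_{\overline{\chi}}}{\partial\lambda_{\sigma,j}}=\frac{u_{\chi,\sigma}}{o(\sigma)}\frac{\psi_{\overline{\chi}}}{z-\lambda_{\sigma,j}}$, with no extra multiple of $\psi_{\overline{\chi}}$, depends on $y_{\chi}$ being the product above whose sole $\lambda_{\sigma,j}$-dependence resides in the single factor $(z-\lambda_{\sigma,j})^{u_{\chi,\sigma}/o(\sigma)}$; for a $\lambda_{\sigma,j}$-dependent rescaling $y_{\chi}\mapsto c_{\chi}y_{\chi}$ an additional term proportional to $\psi_{\overline{\chi}}$ would survive in each column and the formula would pick up the correction $\sum_{\chi}(t_{\chi}-1)\frac{\partial\ln c_{\chi}}{\partial\lambda_{\sigma,j}}$. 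This is really the only obstacle worth flagging; everything else is elementary multilinear algebra together with the standard justification (recalled before the lemma) for differentiating period integrals when the homology paths are kept fixed and disjoint from the branch points.
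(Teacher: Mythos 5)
Your proposal is correct and follows essentially the same route as the paper: pass from $C$ to $B_{\sigma,j}$ via a unipotent column operation, differentiate under the integral sign column by column, observe that every column with $k\geq1$ differentiates to a multiple of the column with index $k-1$ in the same $\chi$-block (so its determinant vanishes), and identify the surviving $k=0$ contributions with the matrices $B_{\sigma,j}^{\chi}$. Your closing remark about the normalization of $y_{\chi}$ is a sensible point that the paper handles implicitly by citing the explicit product form of $y_{\chi}$ from the proof of Corollary \ref{pchichi}.
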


\begin{proof}
The expansion of $(z-\lambda_{\sigma,j})^{k}$ binomially yields $\det B_{\sigma,j}=\det C$, so that the required assertion is equivalent to \[\textstyle{\frac{\partial\det B_{\sigma,j}}{\partial\lambda_{\sigma,j}}=\sum_{\{\chi\in\widehat{A}|t_{\chi}\geq2,\chi(\sigma)\neq1\}}\det B_{\sigma,j}^{\chi}}\qquad\mathrm{(without\ the\ logarithm)}.\] But if $H$ is a matrix of functions of $x$ and $H_{l}$ is obtained from $H$ by replacing the $l$th column by its derivative then $\frac{d(\det H)}{dx}$ equals $\sum_{l}\det H_{l}$ (just write $H(x+\varepsilon)$ as $H(x)+\varepsilon H'(x)+O(\varepsilon^{2})$ and expand the determinant). Applying this for $B_{\sigma,j}$ and the variable $\lambda_{\sigma,j}$, we have to replace each differential $(z-\lambda_{\sigma,j})^{k}\frac{dz}{y_{\chi}}$ by its derivative with respect to $\lambda_{\sigma,j}$, before integrating along the $a_{i}$s and taking the determinant. But by Equation \eqref{ddzlnychi} this differential is (locally) $(z-\lambda_{\sigma,j})^{k-u_{\sigma,\chi}/o(\sigma)}$ times a differential that does not depend on $\lambda_{\sigma,j}$. Hence its derivative with respect to $\lambda_{\sigma,j}$ is $\frac{u_{\sigma,\chi}}{o(\sigma)}-k$ times the differential associated with $k-1$, so that if $k>0$ then we get a multiple of a differential appearing on another column. The corresponding determinant thus vanishes and does not contribute to our expression for $\frac{\partial\det B_{\sigma,j}}{\partial\lambda_{\sigma,j}}$. For $k=0$ we obtain from $\psi_{\overline{\chi}}=\frac{dz}{y_{\chi}}$ the matrix $B_{\sigma,j}^{\chi}$, including the coefficient $\frac{u_{\sigma,\chi}}{o(\sigma)}$. The restriction $t_{\chi}\geq2$ appears in Proposition \ref{holdif} (and is hence required for columns with $\chi$ to appear in $C$ or in $B_{\sigma,j}$), and as for characters $\chi$ with $\chi(\sigma)=1$ we have $u_{\chi,\sigma}=0$ by Equation \eqref{uchisigma}, the restriction $\chi(\sigma)\neq1$ does not change the total expression. This proves the lemma.
\end{proof}

We deduce the following relation to a term appearing in Corollary \ref{firstrel}.
\begin{prop}
For every non-trivial $\sigma \in A$ and $1 \leq j \leq r_{\sigma}$ we have \[\sum_{\{\chi\in\widehat{A}|\chi(\sigma)\neq1\}}\frac{f_{\chi,2}^{\chi}(\lambda_{\sigma,j})}{(f_{\chi,0}^{\chi})'(\lambda_{\sigma,j})}=
\sum_{\{\chi\in\widehat{A}|t_{\chi}\geq2,\chi(\sigma)\neq1\}}\frac{f_{\chi,2}^{\chi}(\lambda_{\sigma,j})}{(f_{\chi,0}^{\chi})'(\lambda_{\sigma,j})}=-\frac{\partial\ln\det C}{\partial\lambda_{\sigma,j}}.\] \label{fchichi2}
\end{prop}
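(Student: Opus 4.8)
The plan is to match the explicit formula for $\partial\ln\det C/\partial\lambda_{\sigma,j}$ from Lemma \ref{lndetCder} with the vanishing of the $a_i$-periods of the canonical differential. By that lemma (and the equality $\det B_{\sigma,j}=\det C$ established in its proof) it suffices to show, for every $\chi\in\widehat{A}$ with $t_{\chi}\geq2$, that $\det B_{\sigma,j}^{\chi}/\det B_{\sigma,j}=-f_{\chi,2}^{\chi}(\lambda_{\sigma,j})/(f_{\chi,0}^{\chi})'(\lambda_{\sigma,j})$ when $\chi(\sigma)\neq1$ and that $\det B_{\sigma,j}^{\chi}=0$ when $\chi(\sigma)=1$. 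The latter is immediate: $\chi(\sigma)=1$ forces $u_{\chi,\sigma}=0$, so the column of $B_{\sigma,j}^{\chi}$ replacing the $\psi_{\overline{\chi}}$-column is identically zero. On the other side the characters with $\chi(\sigma)\neq1$ but $t_{\chi}\leq1$ may be appended for free, since then $\widetilde{p}_{\chi}^{\chi}$ carries no $(z-w)^{2}$-term and $\widehat{p}_{\chi}^{\chi}$ vanishes by its degree bound, so $f_{\chi,2}^{\chi}=0$. Finally, for $\chi(\sigma)\neq1$ the denominator $(f_{\chi,0}^{\chi})'(\lambda_{\sigma,j})$ is nonzero, because $f_{\chi,0}^{\chi}=y_{\chi}y_{\overline{\chi}}$ has $\lambda_{\sigma,j}$ as a \emph{simple} root by Corollary \ref{pchichi} and Lemma \ref{chicomp}.

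So fix $\chi$ with $\chi(\sigma)\neq1$ and $t_{\chi}\geq2$, and distill a meromorphic differential on $X$ from the branch-point expansion of the isotypic piece $\omega_{\chi}$ from Equation \eqref{omegaPQexp} (that is, $\eta=\chi$). Working near a pre-image of $\lambda_{\sigma,j}$ in the coordinate $t$, Lemma \ref{expsint} says that the leading term of $\omega_{\chi}(P,Q)$ in $t(Q)$ — genuinely the $t(Q)^{o(\sigma)-1-u_{\overline{\chi},\sigma}}dt(Q)$-term, as $u_{\overline{\chi},\sigma}\geq1$ — has for coefficient a differential $\Psi_{\chi}(P)$ equal to $\tfrac{o(\sigma)}{c_{\chi}}(f_{\chi,0}^{\chi})'(\lambda_{\sigma,j})\,\mu_{\chi}(P)$ plus the holomorphic differential $\tfrac{o(\sigma)}{c_{\chi}}\sum_{\kappa}\sum_{l=2}^{t_{\kappa}}f_{\kappa,l}^{\chi}(\lambda_{\sigma,j})\big(z(P)-\lambda_{\sigma,j}\big)^{l-2}\psi_{\overline{\kappa}}(P)$, with $c_{\chi}\neq0$. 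The point is that $\mu_{\chi}:=\tfrac{u_{\chi,\sigma}}{o(\sigma)}\psi_{\overline{\chi}}/(z-\lambda_{\sigma,j})$ is exactly the differential whose $a_i$-periods form the modified column of $B_{\sigma,j}^{\chi}$, while the differentials $(z-\lambda_{\sigma,j})^{l-2}\psi_{\overline{\kappa}}$ for $\kappa\neq\mathbf{1}$ and $0\leq l-2\leq t_{\kappa}-2$ are, by Proposition \ref{holdif}, precisely the columns of $B_{\sigma,j}$.

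Since $\int_{a_i}\omega_{\chi}(P,Q)=0$ for every $Q$ and $i$ by Lemma \ref{intomegaeta}, and the error term $O(t(Q)^{o(\sigma)-1})$ integrates along the fixed compact path $a_i$ to something of the same order, comparing coefficients of $t(Q)^{o(\sigma)-1-u_{\overline{\chi},\sigma}}dt(Q)$ gives $\int_{a_i}\Psi_{\chi}=0$ for all $i$; dividing by $o(\sigma)/c_{\chi}\neq0$ yields
\[(f_{\chi,0}^{\chi})'(\lambda_{\sigma,j})\int_{a_i}\mu_{\chi}=-\sum_{\kappa}\sum_{l=2}^{t_{\kappa}}f_{\kappa,l}^{\chi}(\lambda_{\sigma,j})\int_{a_i}(z-\lambda_{\sigma,j})^{l-2}\psi_{\overline{\kappa}},\qquad 1\leq i\leq g.\]
Expanding the $a_i$-period vector of $\mu_{\chi}$ in the basis of columns of $B_{\sigma,j}$, Cramer's rule identifies $\det B_{\sigma,j}^{\chi}/\det B_{\sigma,j}$ with the coefficient of the $\psi_{\overline{\chi}}$-column (the term $\kappa=\chi$, $l=2$); by the displayed relation that period vector is $-1/(f_{\chi,0}^{\chi})'(\lambda_{\sigma,j})$ times the period vector of the holomorphic differential $\sum_{\kappa}\sum_{l}f_{\kappa,l}^{\chi}(\lambda_{\sigma,j})(z-\lambda_{\sigma,j})^{l-2}\psi_{\overline{\kappa}}$, whose coefficient in the $(\kappa,l)$-column is simply $f_{\kappa,l}^{\chi}(\lambda_{\sigma,j})$ by linear independence of the basis. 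Hence the $\psi_{\overline{\chi}}$-coordinate equals $-f_{\chi,2}^{\chi}(\lambda_{\sigma,j})/(f_{\chi,0}^{\chi})'(\lambda_{\sigma,j})$, as wanted; summing over $\chi$ with $t_{\chi}\geq2$ and using $\det B_{\sigma,j}=\det C$ finishes the proof. The main obstacle is the bookkeeping in the middle step: verifying that the pole part of the $t(Q)$-leading coefficient of $\omega_{\chi}$ is exactly the single modified column $\mu_{\chi}$ and its holomorphic part a combination of the genuine columns with the stated coefficients — which is precisely what Lemma \ref{expsint}, Corollary \ref{pchichi} and Proposition \ref{holdif} have been arranged to supply.
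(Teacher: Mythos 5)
Your proposal is correct and follows essentially the same route as the paper's own proof: it combines the vanishing of the $a_{i}$-periods of $\omega_{\chi}$ from Lemma \ref{intomegaeta} with the branch-point expansion of Lemma \ref{expsint}, reads off the coefficient of the leading power of $t(Q)$ to get a linear system with matrix $B_{\sigma,j}$, and applies Cramer's rule to identify $\det B_{\sigma,j}^{\chi}/\det B_{\sigma,j}$ with $-f_{\chi,2}^{\chi}(\lambda_{\sigma,j})/(f_{\chi,0}^{\chi})'(\lambda_{\sigma,j})$. Your explicit treatment of the mismatch between the index sets $\{t_{\chi}\geq2\}$ and $\{\chi(\sigma)\neq1\}$ also mirrors the argument closing the paper's proof.
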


\begin{proof}
Recall from Lemma \ref{intomegaeta} that the integral $\int_{a_{i}}\omega_{\eta}(P,Q)$ (in $P$) vanishes for every $1 \leq i \leq g$ and $Q \in X$. Take $Q$ as in Lemma \ref{expsint}, and substitute the expansion of $\omega_{\eta}(P,Q)$ from the first assertion of that lemma. Take $\eta$ with $\eta(\sigma)\neq1$ (for that expansion to have meaning), and compare the integral of the coefficient of $t^{o(\sigma)-1-u_{\overline{\eta},\sigma}}dt$ to 0. Dividing by $1-\delta_{\eta(\sigma),1}=1$, by $\frac{o(\sigma)}{c_{\eta}}$, and by the non-zero number $\big(f_{\eta,0}^{\eta}\big)'(\lambda_{\sigma,j})$, the resulting equality for $1 \leq i \leq g$ is
\[\sum_{\chi\in\widehat{A}}\sum_{l=2}^{t_{\chi}}\frac{f_{\chi,l}^{\eta}(\lambda_{\sigma,j})}{\big(f_{\eta,0}^{\eta}\big)'(\lambda_{\sigma,j})}\int_{a_{i}}(z-\lambda_{\sigma,j})^{l-2}\psi_{\overline{\chi}}=
-\frac{u_{\eta,\sigma}}{o(\sigma)}\int_{a_{i}}\frac{\psi_{\overline{\eta}}}{z-\lambda_{\sigma,j}}.\] This yields a non-homogenous system of $g$ linear equations, in which the indeterminates are the $g$ expressions $f_{\chi,l}^{\eta}(\lambda_{\sigma,j})\big/\big(f_{\eta,0}^{\eta}\big)'(\lambda_{\sigma,j})$ with $\chi\in\widehat{A}$ and $2 \leq l \leq t_{\chi}$. The matrix of coefficients is the matrix $B_{\sigma,j}$ from Lemma \ref{lndetCder} (which is invertible since its determinant equals $\det C\neq0$), and the vector on the right is minus the column that we used for defining $B_{\sigma,j}^{\eta}$ in that lemma. Solving this system using Cramer's Rule, we see that for $\chi=\eta$ and $l=2$ the matrix in the numerator is $B_{\sigma,j}^{\eta}$ with the sign of the new column inverted. Lemma \ref{lndetCder} therefore establishes the equality between the asserted right hand side and the middle sum, and since $f_{\chi,2}^{\chi}=0$ if $t_{\chi}\leq1$ (because $f_{\chi,l}^{\eta}=0$ if $l \geq t_{\chi}$), the equality with the left hand side also follows. This proves the proposition.
\end{proof}
The relation from Proposition \ref{fchichi2} is one of the reasons why we needed the averaging in Corollary \ref{firstrel}---see Remark \ref{NaKoexp}.

\smallskip

At this point we shall invoke a variational formula due to Rauch. Consider a parametrized holomorphic family $\{X_{\mu}\}_{\{\mu\in\mathbb{C},\ |\mu|<\varepsilon\}}$ of genus $g$ Riemann surfaces, set $X=X_{0}$, choose a canonical homology basis for $X$, and identify $X_{\mu}$ with $X$ as real manifolds (with the homology basis). This defines the dual basis $\{v_{s}\}_{s=1}^{g}$ of differentials of the first kind on $X_{\mu}$, varying holomorphically with $\mu$, and we take a holomorphic map $z:X_{\mu}\to\mathbb{CP}^{1}$ that changes holomorphically with $\mu$ as well. Around every point $P \in X$ we choose a coordinate $t_{P}$ such that $t_{P}^{e_{P}}$ is a function $\varphi_{P}(z)$ of $z$, where $e_{P}$ is the ramification index of $z$ at $P$. If $t_{P,\mu}$ is a coordinate around $P$ but now in $X_{\mu}$, and $t_{P,\mu}$ depends holomorphically on $\mu$, then we expand $t_{P}^{e_{P}}=\varphi_{P}(z)$ around $P$ in $X_{\mu}$. This yields, for every $0\leq\nu<e_{P}$, a holomorphic function $c_{P,\nu}$ of $\mu$ with $c_{P,\nu}(0)=0$ such that \[t_{P}^{e_{P}}=t_{P,\mu}^{e_{P}}+\sum_{\nu=0}^{e_{P}-1}c_{P,\nu}(\mu)t_{P,\mu}^{\nu},\mathrm{\ and\ hence\ }t_{P}=t_{P,\mu}\bigg[1+\frac{\mu}{e_{P}}\sum_{\nu=0}^{e_{P}-1}\frac{c_{P,\nu}'(0)}{t_{P,\mu}^{e_{P}-\nu}}\bigg]+O(\mu^{2})\] by taking the power $\frac{1}{e_{P}}$ and expanding. Then the period matrix $\tau(\mu)$ of $X_{\mu}$ is a holomorphic function of $\mu$, whose derivative is evaluated by the \emph{Rauch variational formula}, as appearing in Equation (29) of \cite{[Ra]}.
\begin{thm}
For any $1 \leq r,s \leq g$ we have the equality
\[\frac{d\tau_{rs}(\mu)}{d\mu}\bigg|_{\mu=0}=\sum_{P \in X}\frac{2\pi i}{e_{P}}\sum_{\nu=0}^{e_{P}-2}\frac{c_{P,\nu}'(0)}{(e_{P}-2-\nu)!}\frac{d^{e_{P}-2-\nu}}{dt^{e_{P}-2-\nu}}\bigg(\frac{v_{r}}{dt_{P}}\cdot\frac{v_{s}}{dt_{P}}\bigg)\bigg|_{t=0}.\] \label{variational}
\end{thm}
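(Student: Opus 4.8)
The plan is to reconstruct Rauch's original dissection argument, assuming only the family $\{X_h\}$ and the holomorphically varying, $h$-independent map $z$ of the hypothesis. Recall that $\tau_{rs}(h)=\int_{b_s}v_r^{(h)}$, where $v_r^{(h)}$ is the basis of holomorphic differentials on $X_h$ normalized by $\int_{a_i}v_r^{(h)}=\delta_{ri}$. The first step is to fix a smooth (generally non-holomorphic) trivialization $F_h\colon X\to X_h$ that keeps $z$ fixed and whose failure to be holomorphic is concentrated in small punctured disks around the ramification points of $z$; near such a point $P$ the transition between the coordinate $t_P$ on $X=X_0$ and the coordinate $t_{P,h}$ on $X_h$ is governed, to first order in $h$ and outside the interpolation region, by extracting the $e_P$-th root of $z-z(P)=\sum_\nu c_{P,\nu}(h)t_{P,h}^{\nu}$, which gives $\dot t_{P,h}:=\tfrac{d}{dh}t_{P,h}\big|_{h=0}=-\tfrac{1}{e_P}\sum_{\nu=0}^{e_P-1}c_{P,\nu}'(0)\,t_P^{\nu-e_P+1}$; this is the source of the factor $\tfrac{1}{e_P}$ in the stated formula. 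Pulling $v_r^{(h)}$ back by $F_h$ gives differentials $\omega_r(h)=F_h^{*}v_r^{(h)}$ on the fixed real surface, and since $a_i$- and $b_i$-periods are topological, $\int_{a_i}\omega_r(h)=\delta_{ri}$ and $\int_{b_s}\omega_r(h)=\tau_{rs}(h)$ persist.

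The second step is to differentiate at $h=0$ and analyze $\dot\omega_r:=\tfrac{d}{dh}\omega_r(h)\big|_{h=0}$. Its $(1,0)$-part, call it $\eta_r$, is holomorphic away from the ramification points (there $F_h$ is holomorphic and varies holomorphically), while near a ramification point $P$ the coordinate-change formula shows that $\eta_r$ has exactly the polar part of $d\big(\tfrac{v_r}{dt_P}\cdot\dot t_{P,h}\big)=-\tfrac{1}{e_P}\sum_\nu c_{P,\nu}'(0)\,d\big(\tfrac{v_r}{dt_P}\,t_P^{\nu-e_P+1}\big)$. Its $(0,1)$-part is $\bar\partial$ of a smooth function supported in the interpolation disks, which we arrange to be disjoint from all the cycles $a_i,b_i$; hence the $(0,1)$-part has vanishing periods along every $a_i$ and $b_s$, so $\int_{a_i}\eta_r=\int_{a_i}\dot\omega_r=0$ and $\int_{b_s}\eta_r=\int_{b_s}\dot\omega_r=\dot\tau_{rs}$. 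Thus $\eta_r$ is a meromorphic differential on $X$ with zero $a$-periods whose only poles are the stated ones at the ramification points.

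The third step applies the Riemann bilinear reciprocity law on the $4g$-gon $\Pi$ obtained by dissecting $X$ along the $a_i$ and $b_i$. With $\zeta_s(P)=\int^{P}v_s$ single-valued on $\Pi$, one has $\int_{\partial\Pi}\zeta_s\,\eta_r=2\pi i\sum_{P}\operatorname{Res}_P(\zeta_s\eta_r)$ by the residue theorem, and on the other hand, feeding $\int_{a_i}v_s=\delta_{si}$, $\int_{b_i}v_s=\tau_{si}$, $\int_{a_i}\eta_r=0$, $\int_{b_i}\eta_r=\dot\tau_{ri}$ into the standard bilinear expansion, $\int_{\partial\Pi}\zeta_s\eta_r=\dot\tau_{rs}$. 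It remains to compute the residues. Since $\zeta_s$ is holomorphic at $P$, integration by parts gives $\operatorname{Res}_P(\zeta_s\,d\beta)=-\operatorname{Res}_P(\beta\,v_s)$ with $\beta=\tfrac{v_r}{dt_P}\dot t_{P,h}$, turning the contribution at $P$ into $\tfrac{1}{e_P}\sum_\nu c_{P,\nu}'(0)\operatorname{Res}_P\!\big(\tfrac{v_r}{dt_P}\tfrac{v_s}{dt_P}\,t_P^{\nu-e_P+1}\,dt_P\big)$; expanding $\tfrac{v_r}{dt_P}\tfrac{v_s}{dt_P}$ in $t_P$ and picking the coefficient of $t_P^{-1}$ forces the differentiation order $e_P-2-\nu$, producing $\tfrac{1}{(e_P-2-\nu)!}\tfrac{d^{\,e_P-2-\nu}}{dt^{\,e_P-2-\nu}}\big(\tfrac{v_r}{dt_P}\tfrac{v_s}{dt_P}\big)\big|_{t=0}$, and re-indexing $\nu\mapsto e_P-2-\nu$ yields exactly the claimed expansion.

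I expect the main obstacle to be the second step: making rigorous the local analysis of $\dot\omega_r$ near each ramification point — precisely, that $\eta_r=\dot\omega_r^{(1,0)}$ is meromorphic with exactly the stated principal parts and no spurious singularities, and that the $(0,1)$-part can be pushed off the homology basis so as not to affect any period. The underlying point is that the deformation, because $z$ is held fixed, is represented by a Beltrami differential concentrated near the ramification points, so that one is essentially pairing it against $v_rv_s$; an alternative, more analytic route is to do exactly that — write $\dot\tau_{rs}=\iint_X\dot\mu\,\tfrac{v_rv_s}{(dz)^{2}}$ and let the concentration of $\dot\mu$ localize the integral to the same residue sums — but Rauch's dissection argument is more elementary and is the one the reference \cite{[Ra]} gives. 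Secondary care is needed only to keep all constants and signs straight: the $\tfrac{1}{e_P}$ from the root extraction, the factorials from the Taylor expansion, and the signs from the orientation convention and the integration by parts.
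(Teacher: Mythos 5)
The paper does not prove this statement at all: Theorem \ref{variational} is imported verbatim as Equation (29) of \cite{[Ra]}, and the only surrounding discussion is the setup of the coefficients $c_{P,\nu}$ and the remark on invariance under translating $t_{P,h}$. So there is no internal proof to compare against; what you have written is a reconstruction of Rauch's own argument, and as such it is essentially correct. Your outline is the standard dissection proof: pull back the normalized differentials by a trivialization that is holomorphic away from the ramification points, identify the principal parts of the $(1,0)$-part of $\dot\omega_r$ from the first-order relation $t_{P,h}=t_{P}\big[1-\frac{h}{e_{P}}\sum_{\nu}c_{P,\nu}'(0)t_{P}^{\nu-e_{P}}\big]+O(h^{2})$ (note the paper's displayed expansion of $t_{P}$ omits the factor $\frac{1}{e_{P}}$ that you correctly retain and that accounts for the $\frac{2\pi i}{e_{P}}$ in the formula), and then convert $\dot\tau_{rs}=\int_{b_{s}}\eta_{r}$ into a residue sum via bilinear reciprocity; your bookkeeping of the integration by parts, the factorials, and the re-indexing $\nu\mapsto e_{P}-2-\nu$ all checks out. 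The one place where the argument is a sketch rather than a proof is the one you yourself flag: establishing that $\dot\omega_{r}$ splits as a meromorphic differential with exactly the computed principal parts and zero $a$-periods plus a term whose periods along the chosen $a_{i}$ and $b_{i}$ vanish. A clean way to close that gap is to define $\eta_{r}$ a priori as the unique differential of the second kind with those principal parts and vanishing $a$-periods, and then show that $\dot\omega_{r}-\eta_{r}$ contributes nothing to any period; alternatively one can run the Beltrami-differential pairing $\dot\tau_{rs}=\iint_{X}\dot\mu\,v_{r}v_{s}/(dz)^{2}$ that you mention. Either completion is standard, and nothing in your plan would fail.
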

This allows us to deduce the following result.
\begin{cor}
If $e$ is as in Corollary \ref{firstrel} then the derivative $\frac{\partial\ln\theta[e](0,\tau)}{\partial\lambda_{\sigma,j}}$ equals \[\frac{1}{2}\frac{\partial\ln\det C}{\partial\lambda_{\sigma,j}}+\frac{n}{2}\sum_{(\rho,i)\neq(\sigma,j)}
\frac{2\phi_{h+d\mathbb{Z}}(\xi)+\phi_{h+d\mathbb{Z}}(0)+\frac{(o(\sigma)-1)(o(\rho)-1)}{4}}{o(\sigma)o(\rho)(\lambda_{\sigma,j}-\lambda_{\rho,i})}.\] \label{difeqtheta}
\end{cor}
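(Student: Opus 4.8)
The plan is to turn Corollary \ref{firstrel} into a differential equation for $\ln\theta[e](0,\tau)$ by reading its right-hand side through the heat equation and the Rauch variational formula, and then to rewrite the extra term on its left-hand side using Proposition \ref{fchichi2}. Throughout, $\partial/\partial\lambda_{\sigma,j}$ refers to $d/dh|_{h=0}$ along the holomorphic family $X_{h}$ in which $\lambda_{\sigma,j}$ is replaced by $\lambda_{\sigma,j}+h$ and all the remaining data (the map $z$, the other branching values, and the chosen representing paths of the $a_{i}$ and $b_{i}$) are kept fixed; as explained before the corollary this is again an Abelian cover with the same genus and the same discrete invariants, so $\tau=\tau(h)$ and $\det C=\det C(h)$ are holomorphic near $h=0$.

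First I would use the chain rule together with the heat equation. Since $z=0$ is held fixed, $\frac{\partial\ln\theta[e](0,\tau)}{\partial\lambda_{\sigma,j}}=\sum_{r\le s}\frac{\partial\ln\theta[e]}{\partial\tau_{rs}}(0,\tau)\,\frac{\partial\tau_{rs}}{\partial\lambda_{\sigma,j}}$. The heat equation $\frac{\partial^{2}\theta[e]}{\partial z_{r}\partial z_{s}}=2\pi i(1+\delta_{rs})\frac{\partial\theta[e]}{\partial\tau_{rs}}$, combined with Proposition \ref{thetader0} (which makes the first-order $z$-derivatives at $z=0$ vanish, so that $\frac{\partial^{2}\ln\theta[e]}{\partial z_{r}\partial z_{s}}\big|_{z=0}=\frac{1}{\theta[e](0,\tau)}\frac{\partial^{2}\theta[e]}{\partial z_{r}\partial z_{s}}(0,\tau)$), rewrites each summand, and since $\frac{\partial^{2}\ln\theta[e]}{\partial z_{r}\partial z_{s}}\big|_{z=0}\frac{\partial\tau_{rs}}{\partial\lambda_{\sigma,j}}$ is symmetric in $r,s$ the identity $\sum_{r\le s}\frac{1}{1+\delta_{rs}}A_{rs}=\frac12\sum_{r,s}A_{rs}$ yields $\frac{\partial\ln\theta[e](0,\tau)}{\partial\lambda_{\sigma,j}}=\frac{1}{4\pi i}\sum_{r,s}\frac{\partial^{2}\ln\theta[e]}{\partial z_{r}\partial z_{s}}\big|_{z=0}\,\frac{\partial\tau_{rs}}{\partial\lambda_{\sigma,j}}$.

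Next I would compute $\partial\tau_{rs}/\partial\lambda_{\sigma,j}$ from Theorem \ref{variational}. For a pre-image $P$ of a branching value $\lambda_{\rho,i}$ with $(\rho,i)\ne(\sigma,j)$ nothing in the local picture moves, so all $c_{P,\nu}'(0)$ vanish and $P$ contributes $0$. For a pre-image $P$ of $\lambda_{\sigma,j}$, with $t$ the coordinate on $X$ satisfying $t^{o(\sigma)}=z-\lambda_{\sigma,j}$ and $t_{P,h}$ the analogous coordinate on $X_{h}$ with $t_{P,h}^{o(\sigma)}=z-\lambda_{\sigma,j}-h$, one gets $t^{o(\sigma)}=t_{P,h}^{o(\sigma)}+h$, i.e. $c_{P,o(\sigma)}\equiv1$, $c_{P,0}(h)=h$ and $c_{P,\nu}\equiv0$ for $0<\nu<o(\sigma)$; hence $c_{P,0}'(0)=1$ and all other derivatives vanish, so only the term $\nu=e_{P}-2=o(\sigma)-2$ survives in Theorem \ref{variational}. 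Summing over the $n/o(\sigma)$ pre-images of $\lambda_{\sigma,j}$ gives $\frac{\partial\tau_{rs}}{\partial\lambda_{\sigma,j}}=\frac{2\pi i}{o(\sigma)(o(\sigma)-2)!}\sum_{Q\in f^{-1}(\lambda_{\sigma,j})}\frac{d^{o(\sigma)-2}}{dt^{o(\sigma)-2}}\!\left(\frac{v_{r}}{dt}\cdot\frac{v_{s}}{dt}\right)\!\big|_{t=0}$, each $t$ being the coordinate around the corresponding $Q$. Plugging this into the previous display, the factors $2\pi i$ and $o(\sigma)(o(\sigma)-2)!$ reorganize so that $\frac{\partial\ln\theta[e](0,\tau)}{\partial\lambda_{\sigma,j}}$ becomes exactly one half of the right-hand side of Corollary \ref{firstrel}. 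Replacing that right-hand side by the equal left-hand side, and then invoking Proposition \ref{fchichi2} to turn $-\sum_{\{\chi:\chi(\sigma)\ne1\}}f_{\chi,2}^{\chi}(\lambda_{\sigma,j})/(f_{\chi,0}^{\chi})'(\lambda_{\sigma,j})$ into $\partial\ln\det C/\partial\lambda_{\sigma,j}$, produces precisely the claimed formula.

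The main obstacle I expect is the bookkeeping in the Rauch step: one must check that the perturbation family genuinely satisfies the hypotheses of Theorem \ref{variational} (constant genus, $z$ unchanged, homology basis unchanged — this is what the paragraphs before the corollary are for), and one must identify the expansion coefficients $c_{P,\nu}(h)$ carefully enough to see that the $\nu$-sum collapses to its top term $\nu=o(\sigma)-2$, matching exactly the derivative appearing in Corollary \ref{firstrel}. A secondary point requiring care is the heat-equation normalization and the passage from $\sum_{r\le s}$ to $\frac12\sum_{r,s}$: these are precisely what create the coefficients $\tfrac12$ and $\tfrac n2$ in the statement, so an error there would propagate directly into the final constants.
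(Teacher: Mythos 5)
Your proposal is correct and follows essentially the same route as the paper's own proof: both use the Rauch variational formula to identify the right-hand side of Corollary \ref{firstrel} with $\frac{1}{2\pi i}\sum_{r,s}\frac{\partial^{2}\ln\theta[e]}{\partial z_{r}\partial z_{s}}\big|_{z=0}\frac{\partial\tau_{rs}}{\partial\lambda_{\sigma,j}}$, then the vanishing of first-order derivatives from Proposition \ref{thetader0} together with the heat equation to recognize this as $2\,\partial\ln\theta[e](0,\tau)/\partial\lambda_{\sigma,j}$, and finally Proposition \ref{fchichi2} to convert the $f_{\chi,2}^{\chi}$ sum into the $\det C$ term. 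The only difference is the order in which the heat-equation and Rauch steps are performed, and your constants ($4\pi i$, the passage from $\sum_{r\le s}$ to $\frac{1}{2}\sum_{r,s}$, and the collapse of the $\nu$-sum to $\nu=o(\sigma)-2$) all check out.
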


\begin{proof}
The Riemann surface $X_{\mu}$ in our holomorphic family is defined by replacing $\lambda_{\sigma,j}$ by $\lambda_{\sigma,j}+\mu$ and leaving the other $\lambda_{\rho,i}$s invariant. Hence if the branch point $P$ satisfies $z(P)=\lambda_{\rho,i}$ then $e_{P}=o(\rho)$ and $t_{P}$ is defined by $t_{P}^{o(\rho)}=z-\lambda_{\rho,i}$, so that $t_{P,\mu}$ satisfies $t_{P,\mu}^{o(\sigma)}=z-\lambda_{\sigma,j}-\mu$ when $\rho=\sigma$ and $i=j$ and $t_{P,\mu}^{o(\rho)}=z-\lambda_{\rho,i}$ otherwise. It follows that $c_{P,\nu}(\mu)=\delta_{\rho,\sigma}\delta_{i,j}\delta_{\nu,0}\cdot\mu$, so that the equality from Theorem \ref{variational} in this case becomes \[\frac{\partial\tau_{rs}}{\partial\lambda_{\sigma,j}}=\frac{2\pi i}{o(\sigma)\big(o(\sigma)-2\big)!}
\sum_{\{R|z(R)=\lambda_{\sigma,j}\}}\frac{d^{o(\sigma)-2}}{dt_{R}^{o(\sigma)-2}}\bigg(\frac{v_{r}}{dt_{R}}\cdot\frac{v_{s}}{dt_{R}}\bigg)\bigg|_{t_{R}=0},\] where the right hand side (divided by $2\pi i$) appears in Equation \eqref{dervsvr}. The other multiplier from that equation transforms, via the proof of Lemma \ref{highordFe}, the vanishing from Corollary \ref{thetader0}, and the heat equation from Equation \eqref{heat}, as
\[\frac{\partial^{2}\ln\theta[e]}{\partial\zeta_{r}\partial\zeta_{s}}\bigg|_{\zeta=0}=\frac{\frac{\partial^{2}\theta[e]}{\partial\zeta_{r}\partial\zeta_{s}}\Big|_{\zeta=0}}{\theta[e](0,\tau)}-
\frac{\frac{\partial\theta[e]}{\partial\zeta_{r}}\Big|_{\zeta=0}\cdot\frac{\partial\theta[e]}{\partial\zeta_{s}}\Big|_{\zeta=0}}{\theta[e](0,\tau)^{2}}=2\pi i(1+\delta_{r,s})\frac{\frac{\partial\theta[e](0,\tau)}{\partial\tau_{rs}}}{\theta[e](0,\tau)},\] so that the full expression in Equation \eqref{dervsvr} is
\[\frac{2\pi i}{2\pi i}\sum_{r=1}^{g}\sum_{s=1}^{g}(1+\delta_{r,s})\frac{\partial\ln\theta[e](0,\tau)}{\partial\tau_{rs}}\cdot\frac{\partial\tau_{rs}}{\partial\lambda_{\sigma,j}}=2\sum_{1 \leq r \leq s \leq g}\frac{\partial\ln\theta[e](0,\tau)}{\partial\tau_{rs}}\cdot\frac{\partial\tau_{rs}}{\partial\lambda_{\sigma,j}}.\] As $\theta[e](0,\tau)$ depends on $\lambda_{\sigma,j}$ only via $\tau$, this reduces to $2\frac{\partial\ln\theta[e](0,\tau)}{\partial\lambda_{\sigma,j}}$, and therefore Corollary \ref{firstrel} compares the required derivative with half of the expression from Equation \eqref{termswithphi}. Expressing the sum on the characters in that equation via Proposition \ref{fchichi2} yields the asserted expression. This proves the corollary.
\end{proof}
Note that while the characteristic $e=\tau\frac{\varepsilon}{2}+I\frac{\delta}{2}$ depends on $\tau$, the definition of the theta function is in terms of $\varepsilon$ and $\delta$. As these are rational vectors with bounded denominators (see Corollary \ref{torsion}) that vary continuously with $\lambda_{\sigma,j}$, they are indeed constant.

We shall need the integrality properties of appropriate multiples of the coefficients from Corollary \ref{difeqtheta}.
\begin{lem}
For $\sigma$, $\rho$, $d$, and $h$ as above and an arbitrary integer $\xi$, the expression $\frac{2mn}{o(\sigma)o(\rho)}\big[2\phi_{h+d\mathbb{Z}}(\xi)+\phi_{h+d\mathbb{Z}}(0)+\frac{(o(\sigma)-1)(o(\rho)-1)}{4}\big]$ is an even integer, unless the 2-Sylow part of $A$ is isomorphic to the Klein 4-group, $o(\sigma)$ and $o(\rho)$ are even, and $\sigma^{o(\sigma)/2}\neq\rho^{o(\rho)/2}$, where this number is an odd integer. \label{4meven}
\end{lem}

\begin{proof}
We claim that $2\phi_{h+d\mathbb{Z}}(\xi)+\phi_{h+d\mathbb{Z}}(0)$ is integral for odd $d$ and lies in $\frac{3}{4}+\mathbb{Z}$ when $d$ is even. Indeed, if 3 divides $d$ then the terms involving $h$ in Lemma \ref{Dedekind} combine to $-2\frac{h}{3}-\frac{h}{3}=-h\in\mathbb{Z}$, yielding the claim for odd $d$. The remaining terms for even $d$ are $2\frac{1+2\xi}{4}+\frac{1}{4}=\frac{3}{4}+\xi$, establishing the claim also for even $d$. The total expression is therefore the product of 2, $\frac{n}{m}$, $\frac{m}{o(\sigma)}$, $\frac{m}{o(\rho)}$, and $\frac{(o(\sigma)-1)(o(\rho)-1)+3\delta}{4}+l$ for an integer $l$, where $\delta=\delta_{d+2\mathbb{Z},2\mathbb{Z}}$ equals 0 for odd $d$ and 1 for even $d$. We thus have to verify that 2 divides the product of the three former quotients with $(o(\sigma)-1)(o(\rho)-1)+3\delta$ at least twice.

We distinguish among several cases. If both $o(\sigma)$ and $o(\rho)$ are odd then so is $d$, and 2 divides both $o(\sigma)-1$ and $o(\rho)-1$. When $o(\sigma)$ is odd and $o(\rho)$ is even, we have that $m$ is even and $d$ is again odd, and 2 divides both $o(\sigma)-1$ and $\frac{m}{o(\sigma)}$. The case where $o(\sigma)$ is even and $o(\rho)$ is odd is identical. When both $o(\sigma)$ and $o(\rho)$ are even, we assume first that $d$ is even as well. Observe that the residue of $(o(\sigma)-1)(o(\rho)-1)$ modulo 4 is 1 when $\frac{o(\sigma)}{2}$ and $\frac{o(\rho)}{2}$ have the same parity and 3 if these parities differ, and that in the latter case 4 must divide $m$. Therefore when $\frac{o(\sigma)}{2}$ and $\frac{o(\rho)}{2}$ have the same parity the number $(o(\sigma)-1)(o(\rho)-1)+3$ is already divisible by 4. On the other hand, if $\frac{o(\sigma)}{2}$ is odd and $\frac{o(\rho)}{2}$ is even then $(o(\sigma)-1)(o(\rho)-1)+3$ is only divisible by 2, but $\frac{m}{o(\sigma)}$ is also even. As the case in which $\frac{o(\sigma)}{2}$ is even and $\frac{o(\rho)}{2}$ is odd is again identical, the case of even $d$ is proved.

In the remaining case $o(\sigma)$ and $o(\rho)$ are even but $d$ is odd, and then the term $(o(\sigma)-1)(o(\rho)-1)+0$ is odd. In this case we consider the structure of the 2-Sylow part $A_{2}$ of $A$, and let $\langle\sigma\rangle_{2}$ (resp. $\langle\rho\rangle_{2}$) be the intersection of $\langle\sigma\rangle$ (resp. $\langle\rho\rangle$) with $A_{2}$. Since $o(\sigma)$ and $o(\rho)$ are even, these subgroups are non-trivial, but as $d$ is odd, they intersect trivially. This implies that $A_{2}$ cannot be cyclic, and therefore $\frac{n}{m}$ is even. Moreover, if $\frac{n}{2m}$ is even then 2 already divides $\frac{n}{m}$ twice, and when $\frac{n}{2m}$ is odd, $A_{2}$ must be the product of a cyclic group with a group of order 2. In addition, in case the index of $\langle\sigma\rangle_{2}$ (resp. $\langle\rho\rangle_{2}$) in $A_{2}$ is larger than 2 then 2 divides both $\frac{n}{m}$ and $\frac{m}{o(\sigma)}$ (resp. $\frac{m}{o(\sigma)}$), and we are done. Now, the only situation that we have not yet covered is when $A_{2}$ is the product of a cyclic group with a group of order 2 and $\langle\sigma\rangle_{2}$ and $\langle\rho\rangle_{2}$ are cyclic subgroups of index 2 in $A_{2}$ that intersect trivially. But if 4 divides $m$ then $A_{2}$ with this structure contains only two cyclic subgroups of index 2, and their intersection is non-trivial. Hence the described situation can occur only when $\frac{m}{2}$ is odd, and then the property of $A_{2}$ determines it as a Klein 4-group. In this case $\langle\sigma\rangle_{2}$ and $\langle\rho\rangle_{2}$ are distinct non-trivial cyclic subgroups of $A_{2}$, and they are clearly generated by $\sigma^{o(\sigma)/2}$ and $\rho^{o(\rho)/2}$ respectively (which are thus distinct elements of $A$). As in this special situation all the numbers $\frac{n}{2m}$, $\frac{m}{o(\sigma)}$, $\frac{m}{o(\rho)}$, and $(o(\sigma)-1)(o(\rho)-1)+0$ are odd, this completes the proof of the lemma.
\end{proof}

\smallskip

Recall that $n$ and $m$ are the order and the exponent of $A$ respectively, that $o(\sigma)$ is the order of $\sigma$ in $A$, and that $C$ is the matrix of integrals of the form $\int_{a_{i}}z^{k}\psi_{\chi}$ with $1 \leq i \leq g$, $\mathbf{1}\neq\chi\in\widehat{A}$, and $0 \leq k \leq t_{\overline{\chi}}-2$. Since we would like to consider each (unordered) couple of pairs $(\sigma,j)$ and $(\rho,i)$ just once, we fix some arbitrary full order on the set of such pairs, and denote it by $<$. We can now prove the main result of this paper.
\begin{thm}
Let $\Delta$ be a divisor given in the form of Equation \eqref{normdiv}, with $h^{\Delta}=1$ and such that the equalities from Theorem \ref{nonspdiv} hold (so that $\Delta$ is of degree $g-1$ and $r(-\Delta)=0$), and set $e=u(\Delta)+K$. Then there exists a complex number $\alpha_{e}$, which is independent of the branching values $\lambda_{\sigma,j}$, such that $\theta[e]^{4m}(0,\tau)$ equals
\[\alpha_{e}(\det C)^{2m}\prod_{(\sigma,j)<(\rho,i)}(\lambda_{\sigma,j}-\lambda_{\rho,i})^{\frac{2mn}{o(\sigma)o(\rho)}\big[2\phi_{h+d\mathbb{Z}}(\xi)+\phi_{h+d\mathbb{Z}}(0)+\frac{(o(\sigma)-1)(o(\rho)-1)}{4}\big]}\] for every choice of the branching values. Here given $\sigma$, $j$, $\rho$, and $i$, the indices $d$ and $h$ are defined in Equation \eqref{dhdef}, the argument $\xi$ is $\beta_{\rho,i}-h\beta_{\sigma,j}$ modulo $d\mathbb{Z}$, and $\phi_{h+d\mathbb{Z}}$ is the function defined in Equation \eqref{phihd}. Moreover, the value of $\alpha_{e}$ does not depend on the ordering of the differentials in the definition of $C$, and if the 2-Sylow subgroup of $A$ is not a Klein 4-group then $\alpha_{e}$ is also independent of the choice of the order $<$. \label{relpert}
\end{thm}

\begin{proof}
Corollary \ref{difeqtheta} implies that $\frac{\partial\ln\theta[e](0,\tau)}{\partial\lambda_{\sigma,j}}$ equals $\frac{\partial}{\partial\lambda_{\sigma,j}}$ of \[\frac{\ln\det C}{2}+\frac{n}{2}\sum_{(\rho,i)\neq(\sigma,j)}\frac{2\phi_{h+d\mathbb{Z}}(\xi)+\phi_{h+d\mathbb{Z}}(0)+\frac{(o(\sigma)-1)(o(\rho)-1)}{4}}{o(\sigma)o(\rho)}\ln(\lambda_{\sigma,j}-\lambda_{\rho,i}).\] We multiply by $4m$, and then the coefficient of $\ln(\lambda_{\sigma,j}-\lambda_{\rho,i})$ becomes integral by Lemma \ref{4meven}. Integrating the resulting differential equation and exponentiating compares $\theta[e]^{4m}(0,\tau)$ to \[\beta_{e,\sigma,j}(\det C)^{2m}\prod_{(\rho,i)\neq(\sigma,j)}(\lambda_{\sigma,j}-\lambda_{\rho,i})^{\frac{2mn}{o(\sigma)o(\rho)}\big[2\phi_{h+d\mathbb{Z}}(\xi)+\phi_{h+d\mathbb{Z}}(0)+\frac{(o(\sigma)-1)(o(\rho)-1)}{4}\big]}\] for some non-zero constant $\beta_{e,\sigma,j}$, where the product in only over the pair $(\rho,i)$. Here the constant $\beta_{e,\sigma,j}$ may depend on all the other branching values (and $e$), but not on $\lambda_{\sigma,j}$. We write $\beta_{e,\sigma,j}$ as another number $\alpha_{e,\sigma,j}$ times the product over all the couples of pairs not involving $(\sigma,j)$ (which is non-zero), and the new coefficient $\alpha_{e,\sigma,j}$ is also independent of $\lambda_{\sigma,j}$. This gives us the desired equality (perhaps up to a sign that we also absorb in the coefficient if necessary), but with $\alpha_{e,\sigma,j}$ instead of $\alpha_{e}$. However, as $\theta[e]^{4m}(0,\tau)$, $(\det C)^{2m}$, and the polynomial are all independent of the choice of $\sigma$ and $j$, the number $\alpha_{e,\sigma,j}$ is the same number for all $\sigma$ and $j$. We thus denote it by $\alpha_{e}$, and deduce that it does not depend on the value of $\lambda_{\sigma,j}$ for any $\sigma$ and $j$. Finally, $\theta[e](0,\tau)$ and the even power $2m$ of $\det C$ are not affected by altering the column order in $C$ or replacing $<$ by another order, and the latter simply replaces some of the differences $\lambda_{\sigma,j}-\lambda_{\rho,i}$ by their additive inverses. As Lemma \ref{4meven} shows that when $A_{2}$ is not a Klein 4-group, every such difference is raised to an even power, this expression is also independent of the choice of $<$, hence so is $\alpha_{e}$. This proves the theorem.
\end{proof}

\begin{rmk}
It can be shown, using the conditions for the irreducibility of $X$, that when $A_{2}$ is the Klein 4-group then some difference $\lambda_{\sigma,j}-\lambda_{\rho,i}$ with an odd exponent must appear in Theorem \ref{relpert}. Hence only $\alpha_{e}^{2}$, and not $\alpha_{e}$, is independent of $<$ in this case. Note that the power $4m$ (or $8m$ in the Klein $A_{2}$ case) is not the minimal one rendering our coefficients even integers in general, but we must take an even power in any case for the final exponent of $\det C$ to become integral. An analysis similar to the proof of Lemma \ref{4meven} discovers that a suitable power is $2m\big/\gcd\big\{m,\frac{n}{m}\big\}$ in case $\mathrm{lcm}\big\{m,\frac{n}{m}\big\}\big/m$ is even and $4m\big/\gcd\big\{m,\frac{n}{m}\big\}$ when it is odd, with very few exceptions that are based on the structure of $A_{2}$. For describing these, we denote by $(\upsilon_{1},\ldots,\upsilon_{q})$ the finite abelian 2-group $\prod_{l=1}^{q}\mathbb{Z}/2^{\upsilon_{l}}\mathbb{Z}$ for a decreasing sequence of positive integers $\{\upsilon_{l}\}_{l=1}^{q}$, and note that if $m$ is $2^{\upsilon}$ times an odd number and $\upsilon\geq1$ then $\upsilon_{1}=\upsilon$ in the sequence describing $A_{2}$. Then the finitely many possibilities of $A_{2}$, with $\upsilon_{1}=\upsilon$, for which we need a larger power in Theorem \ref{relpert} than the general rule from above are as follows. For $(\upsilon,\upsilon,1,1)$, as well as for $(\upsilon,\upsilon,2)$ and $(\upsilon,\upsilon-1,1,1)$ with $\upsilon\geq2$ and for $(\upsilon,\upsilon-1,2)$ with $\upsilon\geq3$, we need $4m\big/\gcd\big\{m,\frac{n}{m}\big\}$ even though $\mathrm{lcm}\big\{m,\frac{n}{m}\big\}\big/m$ is even. When the structure of $A_{2}$ is $(\upsilon,\upsilon,1)$, or when it is $(\upsilon,\upsilon-1)$ or $(\upsilon,\upsilon-1,1)$ with $\upsilon\geq2$, the minimal power is $8m\big/\gcd\big\{m,\frac{n}{m}\big\}$, and if $A_{2}$ has the structure $(\upsilon,\upsilon)$ then we must take $16m\big/\gcd\big\{m,\frac{n}{m}\big\}$ (the latter situation with $\upsilon=1$ is the Klein 4-group excluded in Lemma \ref{4meven} and Theorem \ref{relpert}, and then $8m$ is $16m\big/\gcd\big\{m,\frac{n}{m}\big\}$ times an odd number). In all these cases the resulting constant from Theorem \ref{relpert} does not depend on the order of the differentials in $C$, except when $\mathrm{lcm}\big\{m,\frac{n}{m}\big\}\big/m$ is even and we use the power $2m\big/\gcd\big\{m,\frac{n}{m}\big\}$, since the power $m\big/\gcd\big\{m,\frac{n}{m}\big\}$ of $\det C$ is odd in this case. The power we described here seems to be the minimal one in general, though in some particular situations it can be slightly improved (see, e.g., Equation \eqref{EGZ} below with $n\equiv1(\mathrm{mod\ }4)$). We conclude by remarking that since $\det C$ depends on the arbitrary normalization of the functions $y_{\chi}$ from Proposition \ref{decomgen}, it is difficult to say anything explicit about the value of $\alpha_{e}$. \label{minpow}
\end{rmk}

\section{Well-Definition on Moduli Spaces \label{Moduli}}

The coefficient $\alpha_{e}$ from Theorem \ref{relpert} depends, at this point, on $e$, as well as on the choice of the canonical homology basis on $X$ (via $\tau$), the map $z$, and the choice of the index $1 \leq j \leq r_{\sigma}$ given to any branching value $\lambda_{\sigma,j}$. Now, the references \cite{[Na]}, \cite{[EiF]}, \cite{[EbF]}, and \cite{[FZ]} have all established the independence of $e$ in the cases they considered. This was also done in the more general reference \cite{[Z]}, but depending on a conjecture about the action of certain operators being transitive. In this section we investigate the dependence on all these parameters, namely we prove that $\alpha_{e}$ is well-defined on coarser and coarser moduli spaces.

We begin with some notation. The group $A$ will be considered as fixed throuought, and so is the sequence $\{r_{\sigma}\}_{Id_{X}\neq\sigma \in A}$ (hence also the genus $g$ by Proposition \ref{genus}), which we denote by $\vec{r}$. We make the following definition.
\begin{defn}
Let $\mathcal{M}_{A,\vec{r}}$ be the moduli (or Torelli) space of Riemann surfaces $X$ that can be presented as an $A$-abelian cover of $\mathbb{CP}^{1}$ in such a manner that for every $Id_{X}\neq\sigma \in A$, the number of points that are attached to $\sigma$ via Proposition \ref{liftcirc} is $r_{\sigma}$. We denote by $\mathcal{M}_{A,\vec{r}}^{Tei}$ the moduli (or Teichm\"{u}ller) space of Riemann surfaces $X$ admitting such an $A$-structure, together with a choice of a canonical homology basis for $X$. We define $\mathcal{M}_{A,\vec{r}}^{Tei,z}$ to be the moduli space of $A$-abelian covers $z:X\to\mathbb{CP}^{1}$ with the required parameters being $\{r_{\sigma}\}_{Id_{X}\neq\sigma \in A}$ (with the choice of the map $z$ as part of the structure) with a canonical homology basis for $X$. We set $\mathcal{M}_{A,\vec{r}}^{Tei,z,ord}$ to be the moduli space of maps $z:X\to\mathbb{CP}^{1}$ as before, with a canonical homology basis on $X$, and with markings indicating which branching value that is associated with $\sigma$ has which index $1 \leq j \leq r_{\sigma}$. Finally, we denote by $\mathcal{M}_{A,\vec{r}}^{z,ord}$ the moduli space of the usual maps $z:X\to\mathbb{CP}^{1}$ with the markings, but without the choice of the canonical homology basis. \label{moduli}
\end{defn}
It is evident from Definition \ref{moduli} that there are surjective maps
\[\mathcal{M}_{A,\vec{r}}^{Tei,z,ord}\to\mathcal{M}_{A,\vec{r}}^{Tei,z}\to\mathcal{M}_{A,\vec{r}}^{Tei}\to\mathcal{M}_{A,\vec{r}}\quad\mathrm{as\ well\ as}\quad\mathcal{M}_{A,\vec{r}}^{Tei,z,ord}\to\mathcal{M}_{A,\vec{r}}^{z,ord}\] and that in each step a group acts transitively on the fibers, generically freely and transitively. This group is the product $\prod_{Id_{X}\neq\sigma \in A}S_{r_{\sigma}}$ of symmetric groups for the first map, the automorphism group $PSL_{2}(\mathbb{C})$ of $\mathbb{CP}^{1}$ on the middle one, and the discrete symplectic group $Sp_{2g}(\mathbb{Z})$ on the last arrow of the sequence as well as on the arrow of the additional map. One can verify, e.g., by Proposition \ref{fibprod} and Remark \ref{normFl}, that $\mathcal{M}_{A,\vec{r}}^{z,ord}$ is isomorphic to the open subset of $(\mathbb{CP}^{1})^{\sum_{Id_{X}\neq\sigma \in A}r_{\sigma}}$ consisting of those points all of whose coordinates are distinct. Since $PSL_{2}(\mathbb{C})$ is 3-dimensional, we find that $\mathcal{M}_{A,\vec{r}}^{z,ord}$, $\mathcal{M}_{A,\vec{r}}^{Tei,z,ord}$, and $\mathcal{M}_{A,\vec{r}}^{Tei,z}$ have dimension $\sum_{Id_{X}\neq\sigma \in A}r_{\sigma}$, while the dimension of $\mathcal{M}_{A,\vec{r}}^{Tei}$ and of $\mathcal{M}_{A,\vec{r}}$ is $\sum_{Id_{X}\neq\sigma \in A}r_{\sigma}-3$. This description also shows the existence of the moduli spaces from Definition \ref{moduli} (though perhaps only as orbifolds or stacks).

Recalling from Remark \ref{minpow} that the powers in Theorem \ref{relpert} are even, we deduce from our constructions the following consequence.
\begin{cor}
The number $\alpha_{e}$ is a well-defined constant on the open subset of $\mathcal{M}_{A,\vec{r}}^{Tei,z,ord}$ defined by $z$ not being branched over $\infty$. \label{alphaemod}
\end{cor}
Note that with the choice of a canonical homology basis and of a marking, the parameter $e=u(\Delta)+K$ for $\Delta$ as in Theorem \ref{nonspdiv}, or more precisely its real counterparts $\varepsilon$ and $\delta$ from $\mathbb{Q}^{g}$, are well-defined in $\mathbb{Q}^{g}/(2\mathbb{Z})^{g}$. This is important for the well-definition of the power of $\theta[e](0,\tau)$ in Theorem \ref{relpert}, hence of $\alpha_{e}$ on the open subset of $\mathcal{M}_{A,\vec{r}}^{Tei,z,ord}$ from Corollary \ref{alphaemod}. This interpretation of the value of $e$ makes it not yet possible to simply assert that if $\alpha_{e}$ is constant on $\mathcal{M}_{A,\vec{r}}^{Tei,z,ord}$ then it is also a well-defined constant on quotient spaces, since the meaning of $e$ has to be clarified on these quotient spaces.

The first step in the desired direction uses the following result.
\begin{prop}
Let $e=u(\Delta)+K$ and $\epsilon=u(\Xi)+K$ be characteristics, where $\Delta$ and $\Xi$ are  divisors that are given in terms of Equation \eqref{normdiv} and satisfy the conditions of Theorem \ref{nonspdiv}. Assume that for each $\sigma \in A$ and $0 \leq \beta<o(\sigma)$, the sets $\{j|\beta_{\sigma,j}^{\Delta}=\beta\}$ and $\{j|\beta_{\sigma,j}^{\Xi}=\beta\}$ have the same cardinality. Then $\alpha_{e}=\alpha_{\epsilon}$. \label{invecard}
\end{prop}

\begin{proof}
If $\gamma$ is a continuous path in the open subset of $\mathcal{M}_{A,\vec{r}}^{Tei,z,ord}$ from Corollary \ref{alphaemod} then $\alpha_{e}$ is constant along $\gamma$. On the other hand, $\det C$ is well-defined on $\mathcal{M}_{A,\vec{r}}^{Tei,z}$, so that if the image of $\gamma$ in $\mathcal{M}_{A,\vec{r}}^{Tei,z}$ is a closed path, then $(\det C)^{2m}$ will have the same value at the initial and final points of $\gamma$. Moreover, since all the symmetric groups are generated by transpositions, it suffices to consider the case where $\Xi$ is obtained from $\Delta$ by interchanging two values $\lambda_{\sigma,j}$ and $\lambda_{\sigma,k}$ (i.e., where $\beta_{\sigma,j}^{\Xi}=\beta_{\sigma,k}^{\Delta}$, $\beta_{\sigma,k}^{\Xi}=\beta_{\sigma,j}^{\Delta}$, and $\beta_{\rho,i}^{\Xi}=\beta_{\rho,i}^{\Delta}$ wherever $\rho\neq\sigma$ or $\rho=\sigma$ and $i\not\in\{j,k\}$). We recall that the relations $e=u(\Delta)+K$ and $\epsilon=u(\Xi)+K$ remain constant along our path in $\mathcal{M}_{A,\vec{r}}^{Tei,z,ord}$.

We therefore construct, for such $\Delta$ and $\Xi$, a path $\gamma$ as a concatenation of three continuous paths, which we write explicitly in the space $\mathcal{M}_{A,\vec{r}}^{z,ord}$ (with the coordinates described above), but consider as continuous lifts into $\mathcal{M}_{A,\vec{r}}^{Tei,z,ord}$. Take a point $\mu\in\mathbb{C}$ that is not a branching value, and consider first three continuous paths, $\delta_{s}:[0,1]\to\mathbb{C}$ with $1 \leq s\leq3$, such that \[\delta_{1}(0)=\lambda_{\sigma,j},\ \delta_{1}(1)=\delta_{2}(0)=\mu,\ \delta_{2}(1)=\delta_{3}(0)=\lambda_{\sigma,k},\mathrm{\ and\ }\delta_{3}(1)=\lambda_{\sigma,j},\] and such that $\delta_{s}(t)$ is neither a branch point nor equals $\mu$ for any $0<t<1$ and $1 \leq s\leq3$. Recall that the open subset of $\mathcal{M}_{A,\vec{r}}^{Tei,z,ord}$ on which $z$ is not ramified over $\infty$ maps onto an open subset of $\mathbb{C}^{\sum_{Id_{X}\neq\sigma \in A}r_{\sigma}}$ (with all the coordinates being different), and our initial point in $\mathcal{M}_{A,\vec{r}}^{z,ord}$ has two coordinates $(\lambda_{\sigma,j},\lambda_{\sigma,k})$, together with all the other coordinates $\lambda_{\rho,i}$. We then define, in the two former coordinates, the paths sending $t\in[0,1]$ to
\[\tilde{\gamma}_{1}(t)=\big(\delta_{1}(t),\lambda_{\sigma,k}\big),\quad\tilde{\gamma}_{2}(t)=\big(\mu,\delta_{3}(t)\big),\quad\mathrm{and}\quad\tilde{\gamma}_{3}(t)=\big(\delta_{2}(t),\lambda_{\sigma,j}\big)\] in $\mathcal{M}_{A,\vec{r}}^{z,ord}$, with the other entries $\lambda_{\rho,i}$ being fixed throughout, and define $\gamma_{s}$ to be a continuous lift of $\tilde{\gamma}_{s}$ (such that $\gamma_{2}(0)=\gamma_{1}(1)$ and $\gamma_{3}(0)=\gamma_{2}(1)$, of course). Then the final point $\gamma_{3}(1)$ is obtained from the initial point $\gamma_{1}(0)$ by the transposition interchanging $\lambda_{\sigma,j}$ and $\lambda_{\sigma,k}$, so that the concatenation $\gamma$ of the $\gamma_{s}$s maps onto a closed path in $\mathcal{M}_{A,\vec{r}}^{Tei,z}$.

Now, Corollary \ref{alphaemod} implies that $\alpha_{e}$ is constant along $\gamma$. But as the effect of moving along $\gamma$ interchanges $\lambda_{\sigma,j}$ and $\lambda_{\sigma,k}$, the theta constant and the polynomial that arise from $\Delta$ and $e$ at the point $\gamma_{3}(1)$ are the same as those arising from $\Xi$ and $\epsilon$ at the point $\gamma_{1}(0)$. Since the value of $(\det C)^{2m}$ was seen to be the same on these two points, Theorem \ref{relpert} implies that $\alpha_{e}$ and $\alpha_{\epsilon}$ can be evaluated as the same ratio, whence their equality. This proves the proposition.
\end{proof}

\begin{cor}
The constants $\alpha_{e}$ from Theorem \ref{relpert} are well-defined on the open subset of $\mathcal{M}_{A,\vec{r}}^{Tei,z}$ where $z$ is not branched over $\infty$. \label{noord}
\end{cor}

\begin{proof}
Consider a divisor $\Delta$ as in Theorem \ref{nonspdiv} such that $e=u(\Delta)+K$ at some point of $\mathcal{M}_{A,\vec{r}}^{Tei,z,ord}$. Then the corresponding characteristic at a different point of $\mathcal{M}_{A,\vec{r}}^{Tei,z,ord}$ with the same image in $\mathcal{M}_{A,\vec{r}}^{Tei,z}$ is obtained by permuting the values $\lambda_{\sigma,j}$, $1 \leq j \leq r_{\sigma}$ for every $Id_{X}\neq\sigma \in A$. It therefore coincides with the point arising from a different divisor $\Xi$ at the initial point of $\mathcal{M}_{A,\vec{r}}^{Tei,z,ord}$, and the sets $\{j|\beta_{\sigma,j}^{\Delta}=\beta\}$ and $\{j|\beta_{\sigma,j}^{\Xi}=\beta\}$ have the same cardinality for every such $\sigma$ and $\beta$. As Proposition \ref{invecard} implies that $\alpha_{e}$ equals $\alpha_{\epsilon}$ for $\epsilon=u(\Xi)+K$, these constants attains the same value on both points of $\mathcal{M}_{A,\vec{r}}^{Tei,z,ord}$, and is therefore well-defined on the quotient $\mathcal{M}_{A,\vec{r}}^{Tei,z}$. This proves the corollary.
\end{proof}

\smallskip

The next result that we prove is as follows.
\begin{prop}
The constants $\alpha_{e}$ extend to the full moduli space $\mathcal{M}_{A,\vec{r}}^{Tei,z}$, and reduce to well-defined expressions on $\mathcal{M}_{A,\vec{r}}^{Tei}$. \label{infzinv}
\end{prop}

\begin{proof}
Constants clearly extend from a dense open set to the whole space, but one can also verify that all our evaluations remain valid when one branching value, say $\lambda_{\sigma,j}$, is infinite, provided that all the terms involving this branching value are omitted in the calculations. As for the dependence on $z$, we need to show that replacing $z$ by any of its $\mathrm{PSL}_{2}(\mathbb{C})$-images yields the same constant, and it suffices to consider translations, complex dilations, and the inversion. But the translation to $z+\mu$ and the dilation to $\kappa z$ simply replaces each $\lambda_{\sigma,j}$ by $\lambda_{\sigma,j}+\mu$ and $\kappa\lambda_{\sigma,j}$ respectively (where the first leaves each of the functions $y_{\chi}$ invariant and the latter divides it by $\kappa^{t_{\chi}}$), with the same choice of homology basis on $X$. Consider the paths sending $0 \leq t\leq1$ to $\{\lambda_{\sigma,j}+t\mu\}_{\sigma,j}$ or to $t\mapsto\{e^{t\ln\kappa}\lambda_{\sigma,j}\}_{\sigma,j}$ in $\mathcal{M}_{A,\vec{r}}^{z,ord}$ (for some choice of $\ln\kappa$), project them onto $\mathcal{M}_{A,\vec{r}}^{z}$, and lift them to $\mathcal{M}_{A,\vec{r}}^{Tei,z}$ continuously. Since $\alpha_{e}$ is constant along these paths, this proves the invariance under translations and dilations. For the inversion, note that we have equalities in $\mathrm{PSL}_{2}(\mathbb{C})$ with two inversions on the left hand side and only one on the right hand side, like
\[\textstyle{\big(\begin{smallmatrix}0 & -1 \\ 1 & 0\end{smallmatrix}\big)\big(\begin{smallmatrix}1 & -1 \\ 0 & 1\end{smallmatrix}\big)\big(\begin{smallmatrix}0 & -1 \\ 1 & 0\end{smallmatrix}\big)=\big(\begin{smallmatrix}1 & 0\\ 1 & 1\end{smallmatrix}\big)=\big(\begin{smallmatrix}1 & 1 \\ 0 & 1\end{smallmatrix}\big)\big(\begin{smallmatrix}0 & -1 \\ 1 & 0\end{smallmatrix}\big)\big(\begin{smallmatrix}1 & 1 \\ 0 & 1\end{smallmatrix}\big)}.\] As we can ignore translations, we find that the action of one inversion on $\alpha_{e}$ is the same as the action of two inversions, the latter being trivial as well since the inversion is an involution. This proves the proposition.
\end{proof}

The extension of $\alpha_{e}$ to the case of infinite $\lambda_{\sigma,j}$ by omitting all its occurrences in the calculation can also be obtained by verifying that the terms depending on $\lambda_{\sigma,j}$ in the polynomial from Theorem \ref{relpert} combine to $\lambda_{\sigma,j}^{2m\sum_{\chi\in\widehat{A}}(t_{\chi}-1)u_{\chi,\sigma}/o(\sigma)}$ times $1+O\big(\frac{1}{\lambda_{\sigma,j}}\big)$ as $\lambda_{\sigma,j}\to\infty$, while $(\det C)^{2m}$ expands as a non-zero constant times the inverse power of $\lambda_{\sigma,j}$ with a similar error term. The invariance under translations can also be seen by continuously moving the paths representing the $a_{i}$s and comparing $\det C$ with the determinant of a matrix similar to $B_{\sigma,j}$ from Lemma \ref{lndetCder}. For the dilations, one moves the paths again, the polynomial from Theorem \ref{relpert} is multiplied by $\kappa^{\sum_{\chi\in\widehat{A}}t_{\chi}(t_{\chi}-1)/2}$, and $\det C$ is divided by the same power of $\kappa$ because of the effect on $z$, $dz$, and the branching values. Since the inversion might affect the paths $a_{i}$ also in the homology of $X$, the involutive argument is indeed required in the proof of Proposition \ref{infzinv}.

\smallskip

Recall that the group $Sp_{2g}(\mathbb{Z})$, a typical element $M$ of which can be written as $\big(\begin{smallmatrix}E & F \\ G & H\end{smallmatrix}\big)$, has an almost linear action on (rational or real) characteristics modulo $(2\mathbb{Z})^{2g}$, with respect to which theta functions with characteristics transform. Let $Sp_{2g}^{ev}(\mathbb{Z})$ be the subgroup consisting of those matrices in which the diagonal entries of the symmetric matrices $EF^{t}$ and $GH^{t}$ are even. This group acts linearly on characteristics modulo $2\mathbb{Z}^{2g}$, by $M$ as above sending $\binom{\varepsilon}{\delta}$ to $\binom{H\varepsilon-G\delta}{E\delta-F\varepsilon}$. Now, Corollary \ref{torsion} shows that our characteristics $e$ are of the form $\tau\frac{\varepsilon}{2}+I\frac{\delta}{2}$ with $\varepsilon$ and $\delta$ from $\big(\frac{1}{m}\mathbb{Z}\big)^{g}$, and if $M$ as above stabilizes $\binom{\varepsilon}{\delta}$ modulo $(2\mathbb{Z})^{2g}$ then we have the equality
\begin{equation}
\theta^{4m}[e](0,M\tau)=\det(G\tau+H)^{2m}\theta^{4m}[e](0,\tau), \label{modularity}
\end{equation}
since the root of unity involved becomes trivial when raised to the power $4m$ in this case. We denote by $\Gamma_{e}$ the stabilizer of $e$ modulo $(2\mathbb{Z})^{2g}$ in $Sp_{2g}^{ev}(\mathbb{Z})$, and as $\mathcal{M}_{A,\vec{r}}$ is the image of $\mathcal{M}_{A,\vec{r}}^{Tei}$ modulo $Sp_{2g}(\mathbb{Z})$, we get the following result.
\begin{prop}
The constant $\alpha_{e}$ is well-defined on the finite cover of $\mathcal{M}_{A,\vec{r}}$ obtained by dividing $\mathcal{M}_{A,\vec{r}}^{Tei}$ by $\Gamma_{e}$. \label{alphatau}
\end{prop}

\begin{proof}
By Equation \eqref{modularity}, replacing $\tau$ by $M\tau$, where $M\in\Gamma_{e}$ is written as $\big(\begin{smallmatrix}E & F \\ G & H\end{smallmatrix}\big)$ as above, multiplies the left hand side of Theorem \ref{relpert} by $\det(G\tau+H)^{2m}$. The polynomial from that theorem does not depend on the choice of homology basis. Writing the basis that we used for obtaining $\tau$ as $\vec{a}=(a_{i})_{i=1}^{g}$ and $\vec{b}=(b_{i})_{i=1}^{g}$, the matrix $M\tau$ is obtained by replacing $\binom{\vec{b}}{\vec{a}}$ by $M\binom{\vec{b}}{\vec{a}}$. Recall that for $\tau$ the entries of the matrix $C$ involve the integrals of the differentials from Proposition \ref{holdif} with respect to the homology elements of $\vec{a}$, but the entries of the matrix $\widetilde{C}$ associated with $M\tau$ are integrals of the same differentials but with respect to the homology elements of $G\vec{b}+H\vec{a}$. As $C$ transfers the basis $\{v_{s}\}_{s=1}^{g}$ to our basis and $\tau_{is}=\int_{b_{i}}v_{s}$, we deduce that $\widetilde{C}=C\cdot(G\tau+H)$. Hence the action of $M$ multiplies $(\det C)^{2m}$ by $\det(G\tau+H)^{2m}$ as well, and it total does not affect the value of $\alpha_{e}$. This proves the proposition.
\end{proof}

\smallskip

The question of the dependence of the constants $\alpha_{e}$ on the characteristic $e$ was addressed, for general fully ramified $Z_{n}$ curves, in \cite{[Z]} (with many special cases treated in \cite{[Na]}, \cite{[EiF]}, \cite{[EbF]}, \cite{[EG2]}, and \cite{[FZ]}). For combining our exponents with those from that reference, we need the following property.
\begin{lem}
For every $d$, $h$, and $\xi$ as in Equation \eqref{phihd} we have the equality $\phi_{h+d\mathbb{Z}}(\xi+h)=\phi_{h+d\mathbb{Z}}(\xi)+\xi-\frac{d-1}{2}$, where we assume that $0\leq\xi \leq d-1$. \label{phifcomp}
\end{lem}

\begin{proof}
Substituting into Equation \eqref{phihd} and recalling the proof of Proposition \ref{expval}, we find that $\phi_{h+d\mathbb{Z}}(\xi+h)-\phi_{h+d\mathbb{Z}}(\xi)$ equals
\[\sum_{0 \neq k\in\mathbb{Z}/d\mathbb{Z}}\frac{-\mathbf{e}(k\xi/d)}{1-\mathbf{e}(-k/d)}=\sum_{0 \neq k\in\mathbb{Z}/d\mathbb{Z}}\bigg[\sum_{v=1}^{d-1}\frac{v}{d}\mathbf{e}\big(\tfrac{-kv}{d}\big)-\sum_{v=d-\xi}^{d-1}\mathbf{e}\big(\tfrac{-kv}{d}\big)\bigg]\] (since the term $1-\mathbf{e}(kh/d)$ cancels). But as $\sum_{0 \neq k\in\mathbb{Z}/d\mathbb{Z}}\mathbf{e}\big(\tfrac{-kv}{d}\big)=-1$ for every $v$ here, the difference in question reduces to $\sum_{v=d-\xi}^{d-1}1-\sum_{v=1}^{d-1}\frac{v}{d}=\xi-\frac{d-1}{2}$ as desired. This proves the lemma.
\end{proof}

\begin{rmk}
Lemma \ref{phifcomp} implies that the function $f_{h}^{(d)}$ from \cite{[Z]} must send $0\leq\xi \leq d-1$ to $2\phi_{h+d\mathbb{Z}}(0)-2\phi_{h+d\mathbb{Z}}(\xi)$. But altering the exponent of $\lambda_{\sigma,j}-\lambda_{\rho,i}$ by a constant depending only on $\sigma$ and $\rho$ (but not $i$ and $j$) does not affect the invariance from \cite{[Z]}, and in the fully ramified cyclic case considered in that reference we have $o(\sigma)=m=n$ for every $\sigma$ with $r_{\sigma}>0$. Moreover, the power from Theorem \ref{relpert}, which is $4n$ in the cyclic case, divides the exponent $2en^{2}$ from \cite{[FZ]} and \cite{[Z]}. It follows that the results of \cite{[Z]} can be combined with our Theorem \ref{relpert} and its consequences in this case, which implies that $\alpha_{e}^{n}$ (and even a slightly smaller power of $\alpha_{e}$) is independent of the choice of $e$. \label{compwithZ}
\end{rmk}

\smallskip

%The relation of the functions $\phi_{h+d\mathbb{Z}}$ from Equation \eqref{phihd} and the classical Dedekind sums $s(h,d)$ is given by the formula $\phi_{h+d\mathbb{Z}}(0)=d \cdot s(h,d)+\frac{d-1}{4}$, which one easily verifies, e.g., using the proof of Lemma \ref{Dedekind}. The recursive formula for Dedekind sums (see, e.g., Equation (3) of \cite{[Rd]}) translates to $\phi_{h+d\mathbb{Z}}(0)=\frac{d^{2}+h^{2}+3hd-3d-3h+1}{12h}-\frac{d}{h}\phi_{d+h\mathbb{Z}}(0)$ (when $h$ is considered as lying between 0 and $d$), and combining this with the recursive formula for $f_{h}^{(d)}(\xi)$ appearing in Theorem 6.4 of \cite{[Z]} we obtain \[\phi_{h+d\mathbb{Z}}(\xi)=\frac{d^{2}+h^{2}+3hd-3d-3h+1-6\xi(d+h-1-\xi)}{12h}-\frac{d}{h}\phi_{d+h\mathbb{Z}}(\xi)\] (provided that $0 \leq \xi<d$ as well), where in the rightmost summand we may replace both $d$ and $\xi$ by their residues modulo $h$. This gives a recursive argument for evaluating $\phi_{h+d\mathbb{Z}}(\xi)$ for every $d$, $h$, and $\xi$. Another feature of $\phi_{h+d\mathbb{Z}}(\xi)$ in comparison to $s(h,d)$ is that the equality $\sum_{\xi\in\mathbb{Z}/d\mathbb{Z}}\phi_{h+d\mathbb{Z}}(\xi)=0$ holds for every $d$ and $h$, and one can also verify (directly from Equation \eqref{phihd}, say) that $\phi_{-h+d\mathbb{Z}}(\xi)=-\phi_{h+d\mathbb{Z}}(d-1-\xi)$ for every $d$, $h$, and $\xi$.

Recall from Remark \ref{compwithZ} that the constant $\alpha_{e}^{n}$ does not depend on $e$ in the fully ramified cyclic case (provided that the conjecture from \cite{[Z]} holds). Proposition \ref{invecard} also compares constants arising from different characteristics on general abelian covers of the sphere. In addition, the proof of Proposition \ref{alphatau} shows that elements of $Sp_{2g}(\mathbb{Z})\setminus\Gamma_{e}$ operate on $\det C$ in the same manner, while they take $e$ to a different characteristic. As the root of unity hiding in Equation \eqref{modularity} is of globally bounded order, such elements would take the appropriate power of $\alpha_{e}$ (on the moduli space $\mathcal{M}_{A,\vec{r}}$ itself, say) to an expression arising from a different characteristic, of the same order, and whose theta function does not vanish as well. If one can show that this new characteristic is the characteristic arising from a different divisor $\Delta$, then this extension of Proposition \ref{alphatau} would relate $\alpha_{e}$ to constants associated with even more of our divisors and characteristics. Based on these results we pose the following conjecture.
\begin{conj}
There is some integral power $N$, depending on $n$ and $m$ (and maybe the structure of the 2-Sylow subgroup of $A$ in some special cases), such that the constant $\alpha_{e}^{N}$ takes the same value for every characteristic $e$. \label{InvofDelta}
\end{conj}
This global constant $\alpha^{N}$ from Conjecture \ref{InvofDelta}, namely $\alpha_{e}^{N}$ for every $e$, would then depend only on $A$, $\vec{r}$, and the normalization of the functions $y_{\chi}$. Because of the latter, explicit statements regarding its value are would not be easy to make and prove. On the other hand, restricting attention to maps $z:X\to\mathbb{CP}^{1}$ that are defined over a subfield $\mathbb{F}$ of $\mathbb{C}$ containing enough roots of unity (e.g., a number field), as well as to normalizations of the $y_{\chi}$s that are defined over $\mathbb{F}$, the value of $\alpha^{N}$ in $\mathbb{C}^{\times}/\mathbb{F}^{\times}$ will be an invariant of $A$ and $\vec{r}$ alone (over $\mathbb{F}$). Moreover, the polynomials from Theorem \ref{relpert} lie in $\mathbb{F}^{\times}$ in that case, so that we will have a global constant that relates (powers of) theta constants and periods. It may then be worthwhile to investigate the meaning of this expression in terms of the algebraic geometry of the moduli space $\mathcal{M}_{A,\vec{r}}$. We remark that the tools applied in both \cite{[Na]} and \cite{[Z]} (as well as in the references of the latter) inherently use the possibility to add a single branch point to an invariant divisor and obtain another invariant divisor. Since this happens only in the case where the inequality $r_{\sigma}>0$ implies $\langle\sigma\rangle=A$, these tools do not work outside the fully ramified cyclic case. A proof of Conjecture \ref{InvofDelta} may thus require some new tools, and will therefore be left for future research.

\section{Some Explicit Examples \label{Examples}}

In this section we reproduce some results from the literature as examples of Theorem \ref{relpert}. More explicitly, we consider the fully ramified cyclic case from \cite{[EG2]} or Chapter 5 of \cite{[FZ]} (containing some other cases from earlier references), and the case where the group $A$ has exponent 2. For the cyclic case in question, note that our Dedekind sum $\phi_{h+d\mathbb{Z}}(0)$ equals (e.g., by the proof of Lemma \ref{Dedekind}) the expression $d \cdot s(h,d)+\frac{d-1}{4}$, where $s(h,d)$ is the classical Dedekind sum. Without going into the deeper analysis of the Dedekind sums $\phi_{h+d\mathbb{Z}}(\xi)$, we state that wherever $0\leq\xi \leq d-1$ we have
\begin{equation}
\phi_{1+d\mathbb{Z}}(\xi)=\tfrac{d^{2}-1-6\xi(d-\xi)}{12}\qquad\mathrm{and}\qquad\phi_{-1+d\mathbb{Z}}(\xi)=\tfrac{6d-d^{2}-5+6\xi(d-2-\xi)}{12} \label{phipm1d}
\end{equation}
(the first one follows, e.g., from the recursive formula for Dedekind sums appearing in Equation (3) of \cite{[Rd]} together with Lemma \ref{phifcomp}, and the second one by the complementary equation $\phi_{-h+d\mathbb{Z}}(\xi)=-\phi_{h+d\mathbb{Z}}(d-1-\xi)$).

In our terminology, \cite{[EG2]} and Chapter 5 of \cite{[FZ]} consider the case where $A$ is cyclic, $\sigma$ is a generator, the only positive parameters $r_{\rho}$ are $r_{\sigma}$ and $r_{\sigma^{-1}}$, and they are equal. Equation \eqref{phipm1d} is in correspondence, via Proposition \ref{expval}, with the equalities $\gamma_{\sigma,\sigma}=\frac{(n-1)(2n-1)}{6n^{2}}$ and $\gamma_{\sigma,\sigma^{-1}}=\frac{(n-1)(n+1)}{6n^{2}}$, and with similar evaluations for the terms $q_{e}$. Denoting the parameter $\xi$ by $k$ for pairs $(\lambda_{\sigma,j},\lambda_{\sigma,i})$ or $(\lambda_{\sigma^{-1},j},\lambda_{\sigma^{-1},i})$ but setting it to be $n-1-k$ for $(\lambda_{\sigma,j},\lambda_{\sigma^{-1},i})$, we find that
\[2\phi_{1+n\mathbb{Z}}(k)+\phi_{1+n\mathbb{Z}}(0)+\tfrac{(n-1)^{2}}{4}=\tfrac{n^{2}-1}{4}-k(n-k)+\tfrac{(n-1)^{4}}{4}=\tfrac{n(n-1)}{2}-k(n-k),\] as well as
\[2\phi_{-1+n\mathbb{Z}}(n-1-k)+\phi_{-1+n\mathbb{Z}}(0)+\tfrac{(n-1)^{2}}{4}=\tfrac{6n-n^{2}-5}{4}+(k-1)(n-1-k)+\tfrac{(n-1)^{4}}{4},\] which reduces to $k(n-k)$. The polynomial from Theorem \ref{relpert} thus becomes
\begin{equation}
\prod_{1\leq j<i \leq r_{\sigma}}\!\!\!\big[(\lambda_{\sigma,j}-\lambda_{\sigma,i})(\lambda_{\sigma^{-1},j}-\lambda_{\sigma^{-1},i})\big]^{n(n-1)-2k(n-k)}
\prod_{j=1}^{r_{\sigma}}\prod_{i=1}^{r_{\sigma}}(\lambda_{\sigma,j}-\lambda_{\sigma^{-1},i})^{2k(n-k)}. \label{EGZ}
\end{equation}
Our formula agrees, up to an expression that is independent of $\Delta$ and $e$, with the ones from \cite{[EG2]} and Chapter 5 of \cite{[FZ]}.

Note that this result, with the appropriate products taken up to $r_{\sigma^{-1}}$, holds also without the equality $r_{\sigma^{-1}}=r_{\sigma}$ (see Section A.7 of \cite{[FZ]}). We may therefore consider the special case where $r_{\sigma^{-1}}=0$ as well in this example. Here only the first product in Equation \eqref{EGZ} remains, and Theorem \ref{relpert} reduces to \[\theta[e]^{4n}(0,\tau)=\alpha_{e}(\det C)^{2n}\prod_{i<j}(\lambda_{\sigma,j}-\lambda_{\sigma,i})^{n(n-1)-2k(n-k)},\mathrm{\ with\ }k=|\beta_{\sigma,j}^{\Delta}-\beta_{\sigma,i}^{\Delta}|.\] All the exponents in Equation \eqref{EGZ} are even integers, and in correspondence with the minimality from Remark \ref{minpow} (with $\frac{n}{m}=1$ in the $\gcd$), we see that not all of them will remain so in general if we replace $4n$ by a proper divisor. Note, however, that if $n\equiv1(\mathrm{mod\ }4)$ then the exponent $2n$ will also produce even exponents here (though $\det C$ would then be raised to the odd power $n$). Since Proposition \ref{invecard} implies that $\alpha_{e}$ is the same constant $\alpha$ for every characteristic $e$ in this case, this reproduces (again up to some global expression) the results of \cite{[Na]}, \cite{[EbF]}, and Chapter 4 of \cite{[FZ]} (with $r_{\sigma}$ being a multiple $nq$ of $n$, and $g=\frac{(nq-2)(n-1)}{2}$). When $n=2$ there are $2q=2g+2$ branch points, which are of the form $P_{j}=z^{-1}(\lambda_{\sigma,j})$ for $1 \leq j\leq2g+2$, and the divisor $\Delta$ from Theorem \ref{nonspdiv} is of the form $\sum_{j \in I}P_{j}-z^{-1}(\infty)$ for a subset $I$ of $\{j\}_{j=1}^{2g+2}$ with cardinality $g+1$. If $I^{c}$ is the complement of $I$ in $\{j\}_{j=1}^{2g+2}$, then we obtain the result of Thomae stated in the Introduction, namely
\begin{equation}
\theta[e]^{8}(0,\tau)=\alpha(\det C)^{4}\prod_{i,j \in I,\ i<j}(\lambda_{\sigma,j}-\lambda_{\sigma,i})^{2}\prod_{i,j \in I^{c},\ i<j}(\lambda_{\sigma,j}-\lambda_{\sigma,i})^{2}. \label{origThomae}
\end{equation}
Note that here both $\chi\Delta$ for the non-trivial character $\chi$ of $A$ and $N\Delta$ yield, via Lemma \ref{actAdual} and Corollary \ref{NonAinv} respectively, the divisor $\sum_{j \in I^{c}}P_{j}-z^{-1}(\infty)$. The dihedral group from Corollary \ref{dihedral} (which is a Klein 4-group in this case) thus no longer acts freely, since $N\chi=\chi N$ stabilizes all the divisors.

\medskip

We now turn to the case of an arbitrary group $A$ of exponent 2. Then $m=o(\sigma)=2$ for every $Id_{X}\neq\sigma \in A$ (so that $\beta_{\sigma,j}^{\Delta}\in\{0,1\}$ in Equation \eqref{normdiv}), the function $\phi_{1+2\mathbb{Z}}$ takes even numbers to $\frac{1}{4}$ and odd numbers to $-\frac{1}{4}$, and $\phi_{0+1\mathbb{Z}}$ vanishes identically. For two points $\lambda_{\sigma,j}$ and $\lambda_{\sigma,i}$ (with the same $\sigma$) we have $d=2$ and $h=1$, the parameter $\xi$ is 0 when $\beta_{\sigma,j}^{\Delta}=\beta_{\sigma,i}^{\Delta}$ and $\pm1$ otherwise, and \[2\phi_{1+2\mathbb{Z}}(\xi)+\phi_{1+2\mathbb{Z}}(0)+\tfrac{(2-1)^{2}}{4}=2\cdot\tfrac{1}{4}\cdot(-1)^{\xi}+2\cdot\tfrac{1}{4}=\delta_{\xi+2\mathbb{Z},2\mathbb{Z}}\] (namely 1 when $\beta_{\sigma,j}^{\Delta}=\beta_{\sigma,i}^{\Delta}$ and 0 in case $\beta_{\sigma,j}^{\Delta}\neq\beta_{\sigma,i}^{\Delta}$). On the other hand, for points $\lambda_{\sigma,j}$ and $\lambda_{\rho,i}$ with $\rho\neq\sigma$ the index $d$ equals 1 and only the last term $\frac{(2-1)^{2}}{4}=\frac{1}{4}$ remains. Equivalently, examining the values of characters on elements implies that $\gamma_{\sigma,\rho}$ is $\frac{1}{8}$ when $\sigma=\rho$ and $\frac{1}{16}$ otherwise, the value of $q_{\Delta}(\sigma,j;\rho,i)$ is $\frac{1}{16}$ when $\beta_{\sigma,j}^{\Delta}=\beta_{\rho,i}^{\Delta}$ and $-\frac{1}{16}$ in case $\beta_{\sigma,j}^{\Delta}\neq\beta_{\rho,i}^{\Delta}$, and the sum $q_{e}(\sigma,j;\rho,i)$ becomes $\frac{n}{16}$ if $\sigma=\rho$ and $\beta_{\sigma,j}^{\Delta}=\beta_{\rho,i}^{\Delta}$, $-\frac{n}{16}$ when $\sigma=\rho$ and $\beta_{\sigma,j}^{\Delta}\neq\beta_{\rho,i}^{\Delta}$, and 0 wherever $\sigma\neq\rho$. Thus $2q_{e}(\sigma,j;\rho,i)+n\gamma_{\sigma,\rho}$ is indeed $\frac{n}{o(\sigma)o(\rho)}=\frac{n}{4}$ times $\delta_{\xi+2\mathbb{Z},2\mathbb{Z}}$ when $\rho=\sigma$ and times $\frac{1}{4}$ otherwise. Recall that $n=2^{b}$ for some $b\in\mathbb{N}$ when $A$ has exponent 2, so that Theorem \ref{relpert} takes, in this case, the following form.
\begin{thm}
Let $f:X\mapsto \mathbb{CP}^{1}$ be an abelian cover of $\mathbb{CP}^{1}$ whose Galois group $A$ has exponent 2 and order $n=2^{b}$. Then for every characteristic $e=u(\Delta)+K$ for a non-special invariant divisor $\Delta$ of degree $g-1$ on $X$, expressed as in Equation \eqref{normdiv}, the expression $\theta[e]^{8}(0,\tau)$ equals
\[\alpha_{e}(\det C)^{4}\prod_{\sigma \neq Id_{X}}\prod_{\{1 \leq j<i \leq r_{\sigma}|\beta_{\sigma,j}^{\Delta}=\beta_{\sigma,i}^{\Delta}\}}(\lambda_{\sigma,j}-\lambda_{\sigma,i})^{2^{b}}\prod_{\sigma<\rho}\prod_{j=1}^{r_{\sigma}}\prod_{i=1}^{r_{\rho}}(\lambda_{\sigma,j}-\lambda_{\rho,i})^{2^{b-2}}\] for some constant $\alpha_{e}$ that is independent of the branching values, where in the last product $<$ denotes some arbitrary order on the non-trivial elements of $A$. \label{polexp2}
\end{thm}

Note that Corollary \ref{torsion} implies that the characteristics $e=u(\Delta)+K$ has order dividing 4, and in general it does not have order 2, since the divisor of $dz$ will not be equal the divisor of any function $y_{\chi}$ (plus some multiple of $z^{-1}(\infty)$) in general.

Theorem \ref{polexp2} expresses the main results of \cite{[Ko3]} in explicit terms, and without the restriction that the fibered product structure there is based on hyperelliptic covers with distinct branching values as in Remark \ref{normFl}. We note that when $b=1$ there is only one non-trivial element of $A$, so that the term involving the fractional exponent $2^{1-2}=\frac{1}{2}$ in that theorem does not appear (and we reproduce Equation \eqref{origThomae} once again). The case $b=2$ is the Klein 4-group case, with the odd exponent $2^{2-2}=1$, in correspondence with Lemma \ref{4meven}. On the other hand, Remark \ref{minpow} shows that for $b\geq4$ we can get relations with $\theta[e]^{4}(0,\tau)$ and $(\det C)^{2}$ with a polynomial with even powers in Theorem \ref{polexp2}, and when $b\geq5$ this can even be done with $\theta[e]^{2}(0,\tau)$ and $\det C$ themselves (but then $\alpha_{e}$ will depend on the column order in $C$).

Let us consider the case $b=2$, where many of our expressions can be written very explicitly. Then $A=\{Id_{X},\sigma,\rho,\sigma\rho\}$ and $\widehat{A}=\{\mathbf{1},\chi,\eta,\chi\eta\}$, where \[\chi(\sigma)=\eta(\rho)=-1,\ \chi(\rho)=\eta(\sigma)=1,\ u_{\chi,\sigma}=u_{\eta,\rho}=1,\mathrm{\ and\ }u_{\chi,\rho}=u_{\eta,\sigma}=0.\] Since the definition in Proposition \ref{decomgen} shows that
\begin{equation}
r_{\sigma}+r_{\sigma\rho}=2t_{\chi},\ r_{\rho}+r_{\sigma\rho}=2t_{\eta},\ r_{\sigma}+r_{\rho}=2t_{\chi\eta},\mathrm{\ and\ hence\ }r_{\sigma} \equiv r_{\rho} \equiv r_{\sigma\rho}(\mathrm{mod\ }2), \label{parity}
\end{equation}
we can view $X$ as the fibered product of the hyperelliptic curves defined by
\[y_{\chi}^{2}=\prod_{j=1}^{2t_{\chi}-r_{\sigma\rho}}(z-\lambda_{\sigma,j})\prod_{j=1}^{r_{\sigma\rho}}(z-\lambda_{\sigma\rho,j})\mathrm{\ and\ }y_{\eta}^{2}=\prod_{j=1}^{2t_{\eta}-r_{\sigma\rho}}(z-\lambda_{\rho,j})\prod_{j=1}^{r_{\sigma\rho}}(z-\lambda_{\sigma\rho,j}).\] The genus $g$ from Proposition \ref{genus} is $r_{\sigma}+r_{\rho}+r_{\sigma\rho}-3=t_{\chi}+t_{\eta}+t_{\chi\eta}-3$. The normalized divisor from Equation \eqref{normdiv} with $\beta$-indices in $\{0,1\}$ is
\begin{equation}
\Delta=\sum_{j \in I_{\sigma}}z^{-1}(\lambda_{\sigma,j})+\sum_{j \in I_{\rho}}z^{-1}(\lambda_{\rho,j})+\sum_{j \in I_{\sigma\rho}}z^{-1}(\lambda_{\sigma\rho,j})-h^{\Delta}z^{-1}(\infty) \label{Kleindiv}
\end{equation}
for sets of indices $I_{\sigma}$, $I_{\rho}$, and $I_{\sigma\rho}$, and the conditions of Theorem \ref{nonspdiv} become \[|I_{\sigma}|+|I_{\sigma\rho}|=t_{\chi},\ |I_{\rho}|+|I_{\sigma\rho}|=t_{\eta},\ |I_{\sigma}|+|I_{\rho}|=t_{\chi\eta}=t_{\chi}+t_{\eta}-r_{\sigma\rho},\mathrm{\ and\ }h^{\Delta}=1,\] where $|I|$ denotes the cardinality of the set $I$. Solving these equations yields \[|I_{\sigma}|=t_{\chi}-\tfrac{r_{\sigma\rho}}{2}=\tfrac{r_{\sigma}}{2},\ |I_{\rho}|=t_{\eta}-\tfrac{r_{\sigma\rho}}{2}=\tfrac{r_{\rho}}{2},\mathrm{\ and\ }|I_{\sigma\rho}|=\tfrac{r_{\sigma\rho}}{2},\] which proves the following result.
\begin{prop}
If $X$ is the fibered product of two hyperelliptic curves such that the number of common branching values is odd then $X$ carries no non-special invariant divisors of degree $g-1$. \label{nodivs}
\end{prop}
Indeed, the condition from Proposition \ref{nodivs} is that $r_{\sigma\rho}$ is odd (hence so are $r_{\sigma}$ and $r_{\rho}$ by Equation \eqref{parity}), and then $I_{\sigma}$, $I_{\rho}$, and $I_{\sigma\rho}$ must have non-integral cardinalities for $\Delta$ to be non-special. In fact, since all the terms in Equation \eqref{Kleindiv} have even degrees, and the condition from Proposition \ref{nodivs} implies that $g-1$ is odd, $X$ carries no invariant divisors of degree $g-1$ at all (normalized or not). Recall that \cite{[GDT]} was concerned with finding $Z_{n}$ curves with no Thomae formulae because of the non-existence of the required divisors. Proposition \ref{nodivs} provides another family of abelian covers of $\mathbb{CP}^{1}$ that lack a Thomae formula, and for the same reason.

We thus henceforth assume that $r_{\sigma\rho}$ is even, and then the non-special divisors are obtained by taking $I_{\sigma}$, $I_{\rho}$, and $I_{\sigma\rho}$ be subsets of $\{j\}_{j=1}^{r}$ (with $r$ being the even number $r_{\sigma}$, $r_{\rho}$, or $r_{\sigma\rho}$) of cardinality $\frac{r}{2}$. Then the action of $\chi$ (resp. $\eta$, resp. $\chi\eta$) on $\Delta$ in Lemma \ref{actAdual} replaces $I_{\rho}$ and $I_{\sigma\rho}$ (resp. $I_{\rho}$ and $I_{\sigma\rho}$, resp. $I_{\sigma}$ and $I_{\rho}$) by their complements (denoted again by the superscript $c$), the action of $N$ from Corollary \ref{NonAinv} replaces all three sets by their complements, and the dihedral group from Corollary \ref{dihedral} is abelian of order 8 and exponent 2. This group acts freely when $r_{\sigma}$, $r_{\rho}$, and $r_{\sigma\rho}$ are all positive, but when one of them vanishes, say $r_{\sigma\rho}$ (and only one can vanish since $X$ is irreducible), then one non-trivial element (when $r_{\sigma\rho}=0$ it is $N\chi\eta=\chi\eta N$) acts trivially. By reducing the notation of products like $\prod_{j \in I_{\sigma}}\big(z(P)-\lambda_{\sigma,j}\big)$ to simply $\big(z(P)-\lambda_{\sigma,I_{\sigma}}\big)$, and setting $z=z(P)$ and $w=z(Q)$ in $S[e](P,Q)$, the expression from Theorem \ref{Szegoalg} becomes $\frac{\sqrt{dz}\sqrt{dw}}{4(z-w)}$ times
\[\Bigg[\frac{(z-\lambda_{\sigma,I_{\sigma}})(z-\lambda_{\rho,I_{\rho}})(z-\lambda_{\sigma\rho,I_{\sigma\rho}})(w-\lambda_{\sigma,I_{\sigma}^{c}})(w-\lambda_{\rho,I_{\rho}^{c}})(w-\lambda_{\rho\sigma,I_{\rho\sigma}^{c}})}
{(z-\lambda_{\sigma,I_{\sigma}^{c}})(z-\lambda_{\rho,I_{\rho}^{c}})(z-\lambda_{\rho\sigma,I_{\rho\sigma}^{c}})(w-\lambda_{\sigma,I_{\sigma}})(w-\lambda_{\rho,I_{\rho}})(z-\lambda_{\rho\sigma,I_{\rho\sigma}})}\Bigg]^{1/4}+\]
\[+\Bigg[\frac{(z-\lambda_{\sigma,I_{\sigma}^{c}})(z-\lambda_{\rho,I_{\rho}})(z-\lambda_{\sigma\rho,I_{\sigma\rho}^{c}})(w-\lambda_{\sigma,I_{\sigma}})(w-\lambda_{\rho,I_{\rho}^{c}})(w-\lambda_{\rho\sigma,I_{\rho\sigma}})}
{(z-\lambda_{\sigma,I_{\sigma}})(z-\lambda_{\rho,I_{\rho}^{c}})(z-\lambda_{\rho\sigma,I_{\rho\sigma}})(w-\lambda_{\sigma,I_{\sigma}^{c}})(w-\lambda_{\rho,I_{\rho}})(z-\lambda_{\rho\sigma,I_{\rho\sigma}^{c}})}\Bigg]^{1/4}+\]
\[+\Bigg[\frac{(z-\lambda_{\sigma,I_{\sigma}})(z-\lambda_{\rho,I_{\rho}^{c}})(z-\lambda_{\sigma\rho,I_{\sigma\rho}^{c}})(w-\lambda_{\sigma,I_{\sigma}^{c}})(w-\lambda_{\rho,I_{\rho}})(w-\lambda_{\rho\sigma,I_{\rho\sigma}})}
{(z-\lambda_{\sigma,I_{\sigma}^{c}})(z-\lambda_{\rho,I_{\rho}})(z-\lambda_{\rho\sigma,I_{\rho\sigma}})(w-\lambda_{\sigma,I_{\sigma}})(w-\lambda_{\rho,I_{\rho}^{c}})(z-\lambda_{\rho\sigma,I_{\rho\sigma}^{c}})}\Bigg]^{1/4}+\]
\begin{equation}
+\Bigg[\frac{(z-\lambda_{\sigma,I_{\sigma}^{c}})(z-\lambda_{\rho,I_{\rho}^{c}})(z-\lambda_{\sigma\rho,I_{\sigma\rho}})(w-\lambda_{\sigma,I_{\sigma}})(w-\lambda_{\rho,I_{\rho}})(w-\lambda_{\rho\sigma,I_{\rho\sigma}^{c}})}
{(z-\lambda_{\sigma,I_{\sigma}})(z-\lambda_{\rho,I_{\rho}})(z-\lambda_{\rho\sigma,I_{\rho\sigma}^{c}})(w-\lambda_{\sigma,I_{\sigma}^{c}})(w-\lambda_{\rho,I_{\rho}^{c}})(z-\lambda_{\rho\sigma,I_{\rho\sigma}})}\Bigg]^{1/4} \label{Szegoexp}
\end{equation}
(from which the symmetric condition $S[e](P,Q)=S[-e](Q,P)$ is also evident).

As for the polynomials in the definition of $\omega$, we have $f_{\chi,1}^{\chi}=\frac{1}{2}(f_{\chi,0}^{\chi})'$ and $f_{\chi,0}^{\chi}=y_{\chi}^{2}$ in this case (and the same for $\eta$ and $\chi\eta$), and the formula for $\omega$ depends on the polynomials with $l\geq2$. For obtaining an explicit expression here as well, we consider the case where our hyperelliptic curves are elliptic, namely $t_{\chi}=t_{\rho}=2$, and we also take $r_{\sigma\rho}=2$ hence $t_{\chi\rho}=2$ for symmetry. The genus is 3, and if we set $s_{\chi}=\lambda_{\sigma,1}+\lambda_{\sigma,2}+\lambda_{\sigma\rho,1}+\lambda_{\sigma\rho,2}$ (this is the coefficient of $-w^{3}$ in the degree 4 polynomial $f_{\chi,0}^{\chi}$ and of $-\frac{3}{2}w^{2}$ in $f_{\chi,1}^{\chi}$) then $f_{\chi,2}^{\chi}(w)=w^{2}-\frac{s_{\chi}}{2}w$ (up to a constant). Define $s_{\eta}$ and $s_{\chi\eta}$ similarly, and since all the polynomials appearing in the difference $\omega-\xi$ in Corollary \ref{omegaxi} are constants, we find that with $z$ and $w$ as above, $\frac{\omega(P,Q)}{dzdw}$ equals \[\frac{1}{4(z-w)^{2}}+\frac{f_{\chi,0}^{\chi}(w)\!+\!\frac{1}{2}(f_{\chi,0}^{\chi})'(w)(z-w)\!+\!\big(w^{2}-\frac{s_{\chi}}{2}w\big)(z-w)^{2}}{4y_{\chi}(P)y_{\chi}(Q)(z-w)^{2}}+\frac{c_{\chi}^{\chi}}{y_{\chi}(P)y_{\chi}(Q)}+\] \[+\frac{f_{\eta,0}^{\eta}(w)+\frac{1}{2}(f_{\eta,0}^{\eta})'(w)(z-w)+\big(w^{2}-\frac{s_{\eta}}{2}w\big)(z-w)^{2}}{4y_{\eta}(P)y_{\eta}(Q)(z-w)^{2}}+\frac{c_{\eta}^{\eta}}{y_{\eta}(P)y_{\eta}(Q)}+\]
\[+\frac{f_{\chi\eta,0}^{\chi\eta}(w)+\frac{1}{2}(f_{\chi\eta,0}^{\chi\eta})'(w)(z-w)+\big(w^{2}-\frac{s_{\chi\eta}}{2}w\big)(z-w)^{2}}{4y_{\chi\eta}(P)y_{\chi\eta}(Q)(z-w)^{2}}+
\frac{c_{\chi\eta}^{\chi\eta}}{y_{\chi\eta}(P)y_{\chi\eta}(Q)}+\]
\begin{equation}
+\tfrac{c_{\chi}^{\eta}}{y_{\chi}(P)y_{\eta}(Q)}+\tfrac{c_{\chi}^{\chi\eta}}{y_{\chi}(P)y_{\chi\eta}(Q)}+\tfrac{c_{\eta}^{\chi}}{y_{\eta}(P)y_{\chi}(Q)}+\tfrac{c_{\eta}^{\chi\eta}}{y_{\eta}(P)y_{\chi\eta}(Q)}
\tfrac{c_{\chi\eta}^{\chi}}{y_{\chi\eta}(P)y_{\chi}(Q)}+\tfrac{c_{\chi\eta}^{\eta}}{y_{\chi\eta}(P)y_{\eta}(Q)} \label{difexp}
\end{equation}
for some constants $c_{\chi}^{\chi}$, $c_{\eta}^{\eta}$, $c_{\chi\eta}^{\chi\eta}$, $c_{\chi}^{\eta}$, $c_{\chi}^{\chi\eta}$, $c_{\eta}^{\chi}$, $c_{\eta}^{\chi\eta}$, $c_{\chi\eta}^{\chi}$, and $c_{\chi\eta}^{\eta}$. Note that if $f_{\chi,0}^{\chi}(w)=w^{4}-s_{1,\chi}w^{3}+s_{2,\chi}w^{2}-s_{3,\chi}w+s_{4,\chi}$ (with $s_{1,\chi}$ being $s_{\chi}$ from above and the other coefficients are the next symmetric functions in $\lambda_{\sigma,1}$, $\lambda_{\sigma,2}$, $\lambda_{\sigma\rho,1}$, and $\lambda_{\sigma\rho,2}$) then the numerator over $4y_{\chi}(P)y_{\chi}(Q)(z-w)^{2}$ in Equation \eqref{difexp} is the symmetric expression
\begin{equation}
z^{2}w^{2}-\tfrac{s_{1,\chi}}{2}(z^{2}w+zw^{2})+s_{2,\chi}zw-\tfrac{s_{3,\chi}}{2}(z+w)+s_{4,\chi}. \label{symnum}
\end{equation}
The same happens with $\eta$ and $\chi\eta$, and the symmetry of $\omega$ implies that the constants from above must satisfy $c_{\chi}^{\eta}=c_{\eta}^{\chi}$, $c_{\chi}^{\chi\eta}=c_{\chi\eta}^{\chi}$, and $c_{\eta}^{\chi\eta}=c_{\chi\eta}^{\eta}$.

We conclude by remarking that the description of the non-special divisors in terms of sets $I_{\sigma}$ of cardinality $\frac{r_{\sigma}}{2}$, and the actions of $N$ and $\widehat{A}$, extend to any value of $b$ (i.e., any group $A$ of exponent 2). Hence analogues of Proposition \ref{nodivs} can be proved, though the parity conditions are more complicated for $b\geq3$. The formula for $S[e]=F_{e}$ is similar to Equation \eqref{Szegoexp} when $b\geq3$ (but longer), and the formula $f_{\chi,1}^{\chi}=\frac{1}{2}(f_{\chi,0}^{\chi})'$ remains valid, but the only case with $b\geq3$ where $t_{\chi}=2$ for every $\chi$ is when $b=3$ and $r_{\sigma}=1$ for every $\sigma$ (and then there are no divisors again, like in Proposition \ref{nodivs}). On the other hand, in the cyclic case of $b=1$ we have $A=\{Id_{X},\sigma\}$, $\widehat{A}=\{\mathbf{1},\chi\}$, $r_{\sigma}=2g+2$, and $t_{\chi}=g+1$. Then we have seen the form of the divisor $\Delta$ and the action of $\chi$ and $N$, and $S[e](P,Q)$ is given in the simplification of Equation \eqref{Szegoexp} that looks like \[\frac{\sqrt{dz}\sqrt{dw}}{2(z-w)}\Bigg[\frac{(z-\lambda_{\sigma,I})^{1/4}(w-\lambda_{\sigma,I^{c}})^{1/4}}{(z-\lambda_{\sigma,I^{c}})^{1/4}(w-\lambda_{\sigma,I})^{1/4}}+
\frac{(z-\lambda_{\sigma,I^{c}})^{1/4}(w-\lambda_{\sigma,I})^{1/4}}{(z-\lambda_{\sigma,I})^{1/4}(w-\lambda_{\sigma,I^{c}})^{1/4}}\Bigg].\] Recalling that $y_{\chi}^{2}=\prod_{j=1}^{2g+2}(z-\lambda_{\sigma,j})$, the simplification of Equation \eqref{difexp} shows that when $g=1$ there is a constant $c_{\chi}^{\chi}$ such that $\frac{\omega(P,Q)}{dzdw}$ equals \[\frac{1}{4(z-w)^{2}}+\frac{f_{\chi,0}^{\chi}(w)+\frac{1}{2}(f_{\chi,0}^{\chi})'(w)(z-w)+\!\big(w^{2}-\frac{s_{\chi}}{2}w\big)(z-w)^{2}}{4y_{\chi}(P)y_{\chi}(Q)(z-w)^{2}}+\frac{c_{\chi}^{\chi}}{y_{\chi}(P)y_{\chi}(Q)},\] with the numerator in the middle being the symmetric expression from Equation \eqref{symnum} as well.

\noindent\textsc{Finance Department School of Business 2100, Hillside University of Connecticut, Storrs, CT 06268 \\ Einstein Institute of Mathematics, the Hebrew University of Jerusalem, Edmund Safra Campus, Jerusalem 91904, Israel}

\noindent E-mail address: zemels@math.huji.ac.il, yaacov.kopeliovich@uconn.edu

\end{document}